\documentclass[11pt,reqno]{amsart}
\usepackage{amsmath, amsfonts, amsthm, bm}
\usepackage{amssymb}
\usepackage{diagram}
\usepackage[all,dvips]{xy}
\usepackage{cite}
\usepackage[pagewise]{lineno}
\usepackage{appendix}
\usepackage{xcolor}
\numberwithin{equation}{section}

\usepackage{graphicx}
\usepackage{hyperref}
\usepackage{color}
\usepackage{soul,xcolor} 
\usepackage{cite}
\usepackage{comment}
\usepackage{tikz-cd}
\usepackage{tikz}
\usetikzlibrary{positioning}

\usepackage{mathtools}

\usepackage[alphabetic,initials]{amsrefs}

\usepackage{float}

\theoremstyle{plain}
\newtheorem{thm}{Theorem}[section]
\newtheorem{cor}[thm]{Corollary}
\newtheorem{lem}[thm]{Lemma}
\newtheorem{prop}[thm]{Proposition}
\newtheorem{THM}{Theorem}

\theoremstyle{definition}

\newtheorem{defi}[thm]{Definition}
\newtheorem{rem}[thm]{Remark}

\DeclareMathOperator{\Hom}{\mathrm{Hom}}
\DeclareMathOperator{\rk}{\mathrm{rk}}

\DeclareMathOperator{\para}{\mathrm{par}}

\DeclareMathOperator{\Ext}{\mathrm{Ext}}
\DeclareMathOperator{\Bun}{\mathrm{Bun}}

\DeclareMathOperator{\Jac}{\mathrm{Jac}}
\DeclareMathOperator{\ugen}{\mathrm{U^{gen}}}
\DeclareMathOperator{\gr}{\mathrm{gr}}
\DeclareMathOperator{\Aut}{\mathrm{Aut}}
\DeclareMathOperator{\End}{\mathrm{End}}

\DeclareMathOperator{\rank}{\mathrm{rank}}

\newcommand{\mc}[1]{\mathcal{#1}}

\newcommand{\E}{\mathcal{E}}
\newcommand{\C}{\mathbb{C}}

\newcommand{\cO}{\mathcal{O}}
\newcommand{\Q}{\mathbb{Q}}
\newcommand{\R}{\mathbb{R}}
\newcommand{\Z}{\mathbb{Z}}
\newcommand{\PP}{\mathbb{P}}

\newcommand{\Eb}{E_{\bullet}}

\newcommand{\pind}{\mathrm{parind}}

\newcommand{\Pmin}{P^-}
\newcommand{\Pplus}{P^+}
\newcommand{\Mmin}{M^-}
\newcommand{\Mplus}{M^+}
\newcommand{\Smin}{\Sigma^-}
\newcommand{\Splus}{\Sigma^+}
\newcommand{\bmu}{\boldsymbol{\mu}}
\newcommand{\Ugen}{\mathrm{U}^{\mathrm{gen}}}

\title{Moduli of parabolic bundles on an elliptic curve}

\author{Roberto Alvarenga}
\address{ Roberto Alvarenga, São Paulo State University (UNESP), São José do Rio Preto, Brazil.}
\email{roberto.alvarenga@unesp.br}

\author{Inder Kaur}
\address{ Inder Kaur, School of Mathematics and Statistics, University of Glasgow, G12 8QQ , UK}
\email{inder.kaur@glasgow.ac.uk}

\author{Frank Loray}
\address{Frank Loray, CNRS, IRMAR, UMR 6625, Univ Rennes, F-35000 RENNES, FRANCE}
\email{frank.loray@univ-rennes.fr}

\begin{document}

\begin{abstract}
A lot is known about the moduli space of parabolic bundles over curves of genus $g\geq 2$, but the lower genus cases are notably different. The goal of this article is to study the geometry of the moduli space of semistable parabolic bundles of rank $3$ with trivial determinant and one marked point on an elliptic curve. We show that this moduli space is rational, give an explicit description of its geometry, its group of automorphisms and also prove a Torelli-type result. 
\end{abstract}

\maketitle
\section{Introduction}

A quasi parabolic vector bundle $(E,E_\bullet)$ on a smooth projective curve $X$ with marked points $p_1, \dots ,p_s$, is a vector bundle $E$ on $X$, with the additional data of a flag $E_\bullet$ on the fiber over each parabolic point $p_i$. To define a stability condition, Mehta and Seshadri attach weights $(\mu_1,\dots, \mu_r) \in \mathbb{R}^{r}$ to these flags and the resulting bundles are called parabolic vector bundles. One of the motivations for the construction of the moduli space of (semi)stable parabolic bundles \cite{M-S} was to generalise to curves with cusps, the Narasimhan-Seshadri correspondence between stable vector bundles on smooth projective curves and unitary representations of their fundamental groups \cite{N-S}. In analogy with the moduli spaces of vector bundles, several aspects, such as the geometry \cite{Boden-Yokogawa-rat}, the automorphism group and Torelli type theorems of moduli spaces of parabolic bundles have already been studied \cites{NFV, FJ, AG}. However, as mentioned by Tu in the introduction of his article \cite{Tu},  even for moduli spaces of vector bundles, both the techniques and the results for when $X$ is an elliptic curve are very different from the higher genus cases. The dissimilarities continue when we consider parabolic vector bundles. For example, in a recent work D. Alfaya and T. Gomez \cite{AG} proved a higher rank Torelli theorem and described the group of automorphisms of the moduli space of parabolic vector bundles for the case of arbitrary weights, arbitrary rank and arbitrary number of points,  but the curve is assumed to have genus $g \geq 4$. For specific results on elliptic curves as in \cites{BBR,NFV,FJ}, the rank is assumed to be $2$ and often only specific weights are considered. In this article we undertake an extensive study of the moduli space of semistable, trivial determinant, rank $3$ bundles, on an elliptic curve, with arbitrary weights. For simplicity, we consider a unique parabolic point. We will now  describe the geometry of this moduli space, its group of automorphisms and prove a higher-rank Torelli theorem and state how the results obtained differ from those in the higher genus cases. 

\subsection{Description of the geometry of the moduli space} Let $X$ be an elliptic curve defined over the field of complex numbers $\C$,  with structure sheaf $\mc{O}_X$, and let $p \in X$. For $\bmu = (\mu_1,\mu_2, \mu_3) \in \R^3$, denote by $\Bun_{\mc{O}_X}^{\bmu}(3, X,p)$ the moduli space of rank $3$, $\bmu$-parabolic stable bundles over $X$ with parabolic structure at $p$ and trivial determinant. Recall that in this setting, Tu proved in \cite{Tu} that the moduli space of semistable rank $3$, trivial determinant vector bundles $\Bun_{\mc{O}_X}(3, X)$ is isomorphic to $\mathbb{P}^2$ (see Theorem $\ref{thm:TuExplicitIsomorphism}$). Since we consider arbitrary weights, we begin our study with the description of the wall-chamber decomposition of the space of admissible parabolic weights. As we see in section \ref{sec:WallChamberDecomposition}, in the case of a unique point, there are only $2$ chambers and thus only two possibilities for $\Bun_{\mc{O}_X}^{\bmu}(3,X,p)$ to be non empty, which we denote by $\Mmin$ and $\Mplus$. We give in section \ref{sec:StableParabAllChambers} an explicit description of the parabolic bundles that are stable irrespective of the weights (the generic points of the moduli space): they form an open subset $\Ugen$ of the two compact moduli spaces $M^\pm$. The compactifying loci, denoted ${\Sigma^\pm}=M^\pm\setminus \Ugen$, is formed by those parabolic bundles that are stable uniquely with respect to one chamber (see Section \ref{sec:StableParabSingleChamber}). We know by \cite{M-S} that the moduli space $M^{\pm}$ is smooth and of dimension $3$. In Section \ref{sec:StabilityParabVB}, we show that $\bmu$-semistability of $(E,E_\bullet)$ implies semistability of $E$,
and deduce that forgetful Tu's modular map $\Pi^{\bmu}:\Bun_{\mc{O}_X}^{\bmu}(3, X,p)\to\Bun_{\mc{O}_X}(3,X)$ is well-defined, inducing morphisms $\Pi^\pm:M^\pm\to\PP^2$. We describe its geometry in the following result (see Theorem \ref{thm:PTP2} and Theorem \ref{thm:MminequalMplus} for proofs):

\begin{THM}\label{thmA} 
The moduli spaces $\Mmin$ and $\Mplus$ are isomorphic compactifications of $\Ugen$:

\begin{diagram}
     \Mmin &\rTo^{\Psi}_{\sim}&\Mplus\\
     \uInto&\circlearrowleft&\uInto\\
      \Ugen &\rTo^{\mathrm{id}}& \Ugen
\end{diagram}
inducing an isomorphism between compactifying loci
$\Psi\vert_{\Smin}:\Smin\stackrel{\sim}{\longrightarrow}\Splus$. 

\noindent Moreover, Tu's morphism $\Pi^\pm:M^\pm\to\PP^2$ is a (locally trivial) $\PP^1$-bundle isomorphic to 
$$\Pi:\PP(T{\PP^2)}\to\PP^2.$$
Finally, the restriction $\Pi^\pm\vert_{\Sigma^\pm}:\Sigma^\pm\to\PP^2$
is a degree $3$ cover ramifying over a cuspidal sextic. 
\end{THM}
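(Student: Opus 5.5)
The plan is to work entirely through Tu's description. A semistable rank $3$ bundle $E$ with trivial determinant is S-equivalent to $L_1\oplus L_2\oplus L_3$ with $L_i\in\Jac(X)$ and $L_1\otimes L_2\otimes L_3\cong\cO_X$. Tu's isomorphism $\Bun_{\mc{O}_X}(3,X)\cong\PP^2=|3p|$ sends this class to the effective divisor $q_1+q_2+q_3\sim 3p$, where $L_i=\cO_X(q_i-p)$.

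\textbf{Step 1: the fibres of $\Pi^\pm$.} Using the earlier result that $\bmu$-semistability of $(E,E_\bullet)$ forces semistability of $E$, I fix a point $D=q_1+q_2+q_3$ of $\PP^2$ and describe $(\Pi^\pm)^{-1}(D)$.
\begin{itemize}
\item For distinct $q_i$, take $E=\oplus L_i$. A full flag in $E_p$ is a point and a line through it in $\PP(E_p)=\PP^2$. By the explicit list of stable parabolic bundles (Sections \ref{sec:StableParabAllChambers}, \ref{sec:StableParabSingleChamber}), stability means that neither the point nor the line contains any of the three coordinate points or coordinate lines $\PP(L_i\oplus L_j)$.
\item Modulo $\Aut E=(\C^*)^3$, the point becomes $[1:1:1]$. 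The lines through it are parametrised by a $\PP^1$, which has three special points: the lines through the three coordinate vertices.
\item Generic flags give $\Ugen$. The three special lines are added in exactly one chamber: in $M^-$ as flags $F_1\subset F_2$ where $F_1$ is degenerate, and dually in $M^+$.
\item When $q_i$ collide, non-split extensions $F_2$ (the Atiyah bundle twisted by $L$) appear, and a similar normal-form computation applies.
\end{itemize}
In every case the fibre should be a $\PP^1$, and $\Sigma^\pm\cap$ fibre should be the three points labelled by $q_1,q_2,q_3$, counted with multiplicity. This already shows that $\Sigma^\pm\to\PP^2$ has degree $3$ and is identified with the universal divisor
$$\{(x,D): x\in D\}\subset X\times|3p|.$$
Hence it is isomorphic to $X\times\PP^1$, with branch locus the dual curve of the cubic $X\hookrightarrow|3p|^\vee$. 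That dual curve is a sextic with $9$ cusps coming from the flexes, which gives the ramification statement.

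\textbf{Step 2: identifying $M^\pm$ with $\PP(T\PP^2)$.} I will construct the map directly. The flag datum $F_1\subset F_2\subset E_p$ gives, via Hecke modification at $p$, a point together with a tangent direction in $|3p|$. To make this global I would write $M^\pm$ as a family of $\PP^1$'s over $\PP^2$ via a universal family, and show that it equals the projectivisation of a rank $2$ bundle $V$. Smoothness of $M^\pm$ (Mehta--Seshadri) and the fact that all fibres are $\PP^1$ give a smooth conic bundle with no degenerate fibres, which is a $\PP^1$-bundle in the étale topology; since $\mathrm{Br}(\PP^2)=0$, it is $\PP(V)$. Then I pin down $V$ up to twist. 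I would compute $c_1^2-4c_2$ of $V$ using the ramified triple section $\Sigma^\pm$, whose class and self-intersection are known from Step 1, and compare with $T\PP^2$. Alternatively, I would exhibit a natural map: a point of the fibre over $D$ is the line through $[1:1:1]$, equivalently a pencil in $|3p|$ through $D$, that is, a point of $\PP(T_D\PP^2)$. Checking that this naive identification extends over the diagonals is the \emph{main obstacle}. My first attempt is to define it globally as the projectivised tangent map of the Hecke correspondence and verify it on normal forms.

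\textbf{Step 3: $\Psi$.} Once both $M^\pm\cong\PP(T\PP^2)$ compatibly with the identification on $\Ugen$, with each fibre's $\PP^1$ coordinate matching, $\Psi$ is this composite isomorphism. It restricts to the identity on $\Ugen$. It sends $\Sigma^-$ onto $\Sigma^+$, since both are the complement of $\Ugen$. A more intrinsic alternative is to realise $\Psi$ as parabolic duality/elementary transformation $E\mapsto E^\vee$ composed with a Hecke modification, which swaps the chambers. One then checks that it is the identity on generic flags, after which the identity extends by separatedness.
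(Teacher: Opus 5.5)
\textbf{Main gap: Step 2.} Step 2 is where the real content of the theorem lies, and it is not established.

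Your first route fails as stated, because the discriminant $c_1^2-4c_2$ does not determine a rank $2$ bundle on $\PP^2$ up to twist. Any $V$ with $c_1^2-4c_2=-3$ is a twist of one with $(c_1,c_2)=(-1,1)$. Among these, $T\PP^2(-2)$ is the only \emph{stable} one. But the non-split extension $0\to\cO\to V\to I_x(-1)\to 0$ has the same Chern classes; it is locally free, since Cayley--Bacharach is vacuous as $h^0(\cO(-4))=0$. This $V$ is unstable, so $\PP(V)\not\cong\PP(T\PP^2)$ over $\PP^2$. Even granting the value of the discriminant, you would still have to prove that $V$ is stable, and the proposal never addresses this. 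Note also that nothing in Step 1 computes the class of $\Sigma^\pm$ in $\mathrm{Pic}(M^\pm)$.

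Your second route matches the three marked points of $(\Pi^\pm)^{-1}(D)$ with the three pencils through $D$ of divisors containing $q_i$. This is exactly the paper's cross-ratio construction over the open stratum $V_2^\circ$. The whole difficulty is then the extension across $\check X$ and its cusps. A fibre-preserving birational map of $\PP^1$-bundles that is biregular off a divisor can perfectly well be an elementary transformation along that divisor.

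The idea you are missing is the paper's local computation:
\begin{itemize}
\item Near a generic point of $\check X$, normalize each trisection as $\{w=\infty\}\cup\{w^2=z^k\}$.
\item A fibre-preserving map between two such configurations must be $w\mapsto\pm z^{(k^+-k)/2}w$, so it is biregular if and only if the exponents agree.
\item One gets $k=1$ for $\Sigma=\Pi^{-1}(X)$ because it is smooth.
\item One gets $k^+=1$ from the local universal family of Section~\ref{sec:universalparabolic}: the sheets $w=1/(b_3-b_1)$ and $w=1/(b_3-b_2)$ cross transversally over $b_1=b_2$ and are exchanged by $\sigma_{12}$.
\item The remaining codimension-$2$ locus is handled by Levi extension, since the zero set of $ad-bc$ cannot have codimension $2$.
\end{itemize}
``Projectivised tangent map of the Hecke correspondence'' is not a substitute for this.

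Your Step 3 is explicitly conditional on Step 2, so the first assertion of the theorem is not proved either. The paper proves it directly, fibre by fibre: in the normal forms of the fibre computation $\Psi$ is $t\mapsto t/(t-1)$, and $\Ugen$ meets every fibre.

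\textbf{Other errors.} Several further claims are wrong, though they are not needed for the statement.
\begin{itemize}
\item The universal divisor $\{(x,D):x\in D\}$ is the $\PP^1$-bundle over $X$ whose fibre at $x$ is the pencil of lines through $x$. That is, it is $\PP(T\PP^2|_X)$, with $\PP^2$ the plane of the cubic. Since $\deg T\PP^2|_X=9$ is odd, it is \emph{not} $X\times\PP^1$; it is Atiyah's indecomposable surface $S_1$ (Proposition~\ref{prop:SplusminisS1}).
\item Your identification of $\Sigma^\pm$ with the universal divisor is only a set-theoretic bijection over $\check X$ until you know $\Sigma^\pm$ is smooth there. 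What the statement needs from Step 1 is just the fibre counts $3,2,1$. These do give a finite degree $3$ map branched exactly over the $9$-cuspidal sextic $\check X$.
\item Your chamber labels are swapped. In $\Mmin$ it is the line $E_{p,2}$ that may pass through some $L_p$, which is why $P=[1:1:1]$ is the right normalization there. In $\Mplus$ it is the point $E_{p,1}$ that may lie on some $F_p$.
\item Your ``intrinsic'' $\Psi$ cannot be dualization composed with a Hecke modification. Dualization covers $L\mapsto L^{-1}$ on Tu's space, and degree-preserving combinations with Hecke modifications reduce to twists. So no such composite is the identity on $\Ugen$.
\end{itemize}

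What does work for the last point is $\sigma^*\circ(\cdot)^\vee$, with $\sigma$ the involution of $X$ fixing $p$. It maps $\Mplus$ to $\Mmin$ and back. On a split $E$ it sends the flag $(P,\mathcal L)$, with $\mathcal L=\{\sum c_iZ_i=0\}$, to $(\mathcal L^\perp,P^\perp)$. This is torus-equivalent to $(P,\mathcal L)$, because the invariant $[P_1c_1:P_2c_2:P_3c_3]$ is symmetric in the two data. Separatedness then gives the identity on all of $\Ugen$. That would be a clean modular proof of the first assertion, but it still says nothing about $\PP(T\PP^2)$.
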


Note that for higher genus ($g \geq 2$), it is already known that the moduli space of semistable parabolic bundles with fixed determinant is rational \cite{Boden-Yokogawa-rat} although there is no explicit description of the geometry.

\subsection{A higher rank Torelli theorem}

The classical Torelli theorem \cite{Torelli} proven in $1913$ states that a smooth projective curve can be retrieved from its moduli space of degree $0$ line bundles i.e. its principally polarised Jacobian. Since the construction of the moduli space of vector bundles in the $1960$s and then the moduli space of parabolic vector bundles in $1980$, several similar Torelli-type results have been proven i.e. that the curve is determined by the moduli space of higher-rank vector bundles \cite{Mumford-Newstead} and even those with extra structures \cites{BGM, BGHL, Biswas-Hoffmann} and singular curves \cites{Caporaso-Viviani, Basu-Dan-Kaur}. For all these results, the curve is assumed to be of genus $g\geq 2$. As mentioned earlier, in the case of parabolic bundles, Alfaya and Gomez \cite{AG} showed that for a curve of genus $g\geq 4$, it can be retrieved from the moduli space of parabolic bundles of any rank, any number of points and any weights. 

Regarding the case $g=1$, we know by \cite{Tu} that the moduli spaces
of vector bundles of fixed degree and determinant is rational, the dimension of which only depends on the rank and degree: it fails to give information about the isomorphism class of $X$. As we can see
from Theorem \ref{thmA}, again $\Bun_{\mc{O}_X}^{\bmu}(3, X,p)$ is rational, and we need additional structure to get a Torelli type result.
Inspired by \cite{NFV}, we rather consider the pair $(M^\pm,\Sigma^\pm)$
taking into account the jumping locus of those bundles that are stable only for one chamber. 
In this article, we prove the following higher rank Torelli theorem:  

\begin{THM}\label{thmB} \it
Two elliptic curves $(X,p)$ and $(X',p')$ are isomorphic 
if and only if the corresponding moduli spaces of parabolic bundles
$(M,\Sigma)$ and $(M',\Sigma')$ are isomorphic. In particular, we prove that $\Sigma$ is isomorphic to the stable ruled surface over $X$.
\end{THM}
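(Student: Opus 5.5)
The plan is to recover the elliptic curve $X$ from the pair $(M,\Sigma)$ by identifying $\Sigma$ intrinsically as a ruled surface over $X$. Since a ruled surface has a unique ruling when its base has positive genus, its base curve is determined by the abstract surface. The direction ``$(X,p)\cong(X',p')\Rightarrow(M,\Sigma)\cong(M',\Sigma')$'' is formal. An isomorphism $\phi:X\to X'$ with $\phi(p)=p'$ pulls back parabolic bundles with trivial determinant and flags at $p'$ to such bundles with flags at $p$, and it preserves parabolic degrees of subbundles, so it preserves $\bmu$-stability for each chamber. It therefore induces isomorphisms $M\cong M'$ carrying $\Ugen$ to $\Ugen$ and hence $\Sigma$ to $\Sigma'$. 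Here I use the description of $\Sigma^\pm$ as those bundles that are stable for exactly one chamber, which is a condition intrinsic to the parabolic bundle.

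For the converse, the main step is to show that $\Sigma\cong\PP(\mathcal{E})$, where $\mathcal{E}$ is the nontrivial extension $0\to\cO_X\to\mathcal{E}\to\cO_X\to 0$, i.e. the stable (indecomposable, $e=0$) ruled surface over $X$. I would use the explicit description of the single-chamber stable parabolic bundles from Section \ref{sec:StableParabSingleChamber}. A bundle in $\Sigma^-$ (say) should be strictly semistable as a vector bundle, of the form $E=L\oplus F$ or a nonsplit extension of $F$ by $L$, with $\deg L=0$ and $\det F=L^{-1}$. The flag must be in special position relative to the destabilizing subbundle: the line $E_1\subset L_p$, or the plane $E_2\supset L_p$. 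This gives a map $\Sigma\to X$, $(E,E_\bullet)\mapsto L\in\Jac(X)\cong X$, where I use $p$ to identify $\Jac(X)$ with $X$. I would then show that the fibre over $L$ is a $\PP^1$. This $\PP^1$ parametrizes either the extension class in $\PP\Ext^1(F,L)$ together with compatible flag data, or the remaining flag choice. I would also check that the resulting family is a $\PP^1$-bundle $\PP(V)\to X$ with $V$ of rank $2$.

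To pin down $V$, I would compute the self-intersection of a section, or equivalently show that the $\PP^1$-bundle admits no two disjoint sections but has a section of self-intersection $0$. The natural section is the locus where the extension splits or the flag is ``most special''. Alternatively, I would use Theorem \ref{thmA}: $\Sigma\to\PP^2$ is a degree $3$ cover of $\PP^2$ ramified along a cuspidal sextic. I would then compute invariants, for instance $K_\Sigma^2=0$ and $\chi(\cO_\Sigma)=0$, to confirm that $\Sigma$ is ruled over a genus $1$ curve. Among ruled surfaces over $X$, the stable one $\PP(\mathcal{E})$ is singled out because it is the only one with $e=0$ whose curve of minimal self-intersection moves in a $1$-dimensional family with no rigid section. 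I expect this identification of $V$ (indecomposable versus $\cO\oplus\cO$ or $\cO\oplus N$) to be the main obstacle. I would first try to exhibit the unique section of self-intersection $0$ and show that its normal bundle is trivial, via the nonsplit-extension deformation theory.

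Finally, given an isomorphism $(M,\Sigma)\cong(M',\Sigma')$, the restriction gives $\Sigma\cong\Sigma'$. Since $X$ has genus $1$, the ruling of $\PP(\mathcal{E})$ is the Albanese map, so it is intrinsic and the base curves are isomorphic: $X\cong\mathrm{Alb}(\Sigma)\cong X'$. Composing with a translation, we may send $p$ to $p'$, so $(X,p)\cong(X',p')$.
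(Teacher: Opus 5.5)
Your deduction of the Torelli equivalence itself is fine and is essentially the paper's. Once $\Sigma$ is known to be a $\PP^1$-bundle over a curve isomorphic to $X$, its ruling is intrinsic (it is the Albanese map), so $\Sigma\cong\Sigma'$ forces $X\cong X'$, and a translation then takes $p$ to $p'$. This step does not use the isomorphism type of the ruled surface at all. It already follows from $\Sigma=\Pi^{-1}(X)\subset\PP(T\PP^2)$ (Theorem~\ref{thm:PTP2}).

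The genuine gap is in the ``in particular'' part: you are aiming at the wrong surface. $\Sigma$ is not $\PP(\mc{E}_2)$ with $e=0$; note also that $\mc{E}_2$ is only semistable, so $\PP(\mc{E}_2)$ is not the stable ruled surface. The paper proves $\Sigma\cong S_1=\PP(\mc{E}_2^1)$, where $\mc{E}_2^1$ is the nonsplit extension of $\cO_X(\infty)$ by $\cO_X$. This is the surface with $e=-1$, i.e. $\mathrm{Sym}^2X$. Concretely, the paper writes $\Splus$ as the quotient of one special section $\{\lambda=0\}$ of the local universal family by its isotropy group $\langle\sigma_{12}\rangle$. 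That is $X\times X$ modulo the swap, fibred over $\Jac(X)$ by $\{p_1,p_2\}\mapsto\cO_X(p_1+p_2-2\infty)$. The images $s_p$ of $X\times\{p\}$ form a one-parameter family of $(+1)$-sections meeting pairwise transversally in one point. By Atiyah's classification this singles out $S_1$, since in $\PP(\cO_X\oplus\cO_X(p))$ the $(+1)$-sections form a pencil with a base point.

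Consequently several of your steps cannot work as planned:
\begin{itemize}
\item Exhibiting a section of self-intersection $0$ with trivial normal bundle cannot succeed, because every section of $\Sigma$ has odd self-intersection $\geq 1$.
\item Your proposed characterization (minimal curve moving in a $1$-dimensional family, no rigid section) actually describes $S_1$, not $\PP(\mc{E}_2)$, whose unique $0$-section is rigid.
\item The invariants $K^2=\chi(\cO)=0$ hold for every ruled surface over $X$, so they distinguish nothing.
\end{itemize}

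Your own fibration $(E,\Eb)\mapsto L$ would have given the right answer if carried through. A point of $\Sigma$ records $L$ together with the two remaining Jordan--H\"older factors, an unordered pair $\{M,M'\}$ with $M\otimes M'=L^{-1}$, so $\Sigma\cong\mathrm{Sym}^2\Jac(X)$. A direct check: $(x,\ell)\mapsto\ell\cdot X-x$ identifies $\Sigma=\{(x,\ell):x\in\ell\cap X\}$ with $\mathrm{Sym}^2X$.

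A smaller slip: $E_{p,1}=L_p$ makes $(E,\Eb)$ unstable in both chambers. The correct special positions are $L_p\subset E_{p,2}$ with $L_p\neq E_{p,1}$ for $\Smin$, and $E_{p,1}\subset F_p\neq E_{p,2}$ for $\Splus$.
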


\subsection{Description of the automorphism group of the moduli space}
The moduli space of semistable parabolic bundles admits natural automorphisms arising from usual operations on parabolic bundles, such as tensoring the parabolic bundle with a line bundle, dualizing the parabolic vector bundle, taking the pullback of the parabolic vector bundle with respect to an automorphism of the curve or taking a Hecke modification with respect to a flag. We refer automorphisms of the moduli space that are combinations of these operations as \emph{modular automorphisms} (basic transformations in \cite{AG}). It is natural to ask if these are \emph{all} the automorphisms or equivalently if any automorphism of the moduli space can be expressed as a composition of these operations. In \cite{AG}, Alfaya and Gomez prove that this is indeed the case for the moduli space of semistable parabolic bundles with fixed determinant over a curve of genus $g \geq 6$. However, their techniques fail in smaller genus. Nonetheless, in our setting of $X$ being an elliptic curve, using different techniques, we show in Theorem \ref{thm:ModularAutomorphism} that every automorphism is indeed modular and that the automorphism group is finite:

\begin{THM}\label{thmC}
Assume $(X,p)$ has no other automorphism fixing $p$ than the elliptic involution $\sigma$. Then, the automorphism group of $(M^\pm,\Sigma^\pm)$ is modular, of order $18$, generated by the $9$-group of $3$-torsion points in $\Jac(X)$ together with the dualization.  Moreover, this action is just the lift to $\PP(T{\PP^2)}$ of the automorphism group of $(\PP^2,X)$.
\end{THM}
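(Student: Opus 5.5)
We plan to reduce everything to the projective plane through Tu's morphism, using Theorem~\ref{thmA}. Its key feature is that $\Pi^\pm:M^\pm\to\PP^2$ is the $\PP^1$-bundle $\PP(T\PP^2)\to\PP^2$. Let $\phi$ be an automorphism of $(M^\pm,\Sigma^\pm)$.

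\textbf{Step 1: $\phi$ descends to $\PP^2$.} The variety $\PP(T\PP^2)$ is the flag variety $F(1,2;3)$. It has exactly two $\PP^1$-bundle structures, namely the two projections to $\PP^2$ and to $\check{\PP}^2$, and these generate the two extremal rays of its Mori cone. Any automorphism therefore either preserves the two projections or swaps them. Suppose $\phi$ swaps them. Then $\Sigma$ would be a triple cover, for both projections, of planes branched over cuspidal sextics. We will rule this out by computing the class of $\Sigma$ in $\mathrm{Pic}(F)=\Z h_1\oplus\Z h_2$: the degree-$3$ condition over the first factor fixes one coefficient, and the second coefficient comes from the explicit description of $\Sigma$ as the stable ruled surface over $X$ in Theorem~\ref{thmB}. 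We expect this class to be asymmetric, something like $3h_1+h_2$, though this is unverified. Alternatively, if swapping is possible, we will simply compose with a correlation. In either case we obtain an induced automorphism $\bar\phi\in PGL_3$ of $\PP^2$ with $\Pi\circ\phi=\bar\phi\circ\Pi$.

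\textbf{Step 2: $\bar\phi$ preserves the embedded curve $X\subset\PP^2$.} In Tu's identification $\Bun_{\cO_X}(3,X)\cong|3p|^\vee$, the curve $X$ sits as a plane cubic, and the branch sextic of $\Sigma\to\PP^2$ is the dual curve $\check X$. This sextic has $9$ cusps, coming from the flexes. Since $\bar\phi$ preserves the branch locus, it preserves $\check X$, and hence also $X$ by biduality. The group of linear automorphisms of a smooth cubic that preserve the embedding by $|3p|$ is $\Aut(X,\cO(3p))=X[3]\rtimes\Aut(X,p)$. Under the hypothesis $\Aut(X,p)=\{1,\sigma\}$, this group has order $18$.

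\textbf{Step 3: the map $\phi\mapsto\bar\phi$ is injective and surjective.} Every element of $PGL_3=\Aut(\PP^2)$ lifts canonically to $\PP(T\PP^2)$ via its differential. An automorphism of $F$ that covers the identity on $\PP^2$ and preserves the other projection is the identity. This gives injectivity once we know $\phi$ preserves both rulings, which follows from Step~1. For surjectivity, the lifts of elements of $\Aut(\PP^2,X)$ preserve $\Sigma$, because $\Sigma$ is determined by the tangency data of $X$.

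\textbf{Step 4: modular interpretation.} We will check that tensoring a parabolic bundle by a $3$-torsion line bundle $L$ acts on $\PP^2$ as translation by the corresponding point of $X[3]$. We will also check that dualization $(E,E_\bullet)\mapsto(E^*,E^\perp_\bullet)$, suitably normalized by a Hecke modification so that it lands in the same moduli space, acts as $\sigma$. This identifies the group of order $18$ with the modular transformations.

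The main obstacle is Step~1: excluding, or else accounting for, automorphisms that exchange the two rulings. The other steps are essentially classical projective geometry of plane cubics.
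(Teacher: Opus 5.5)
Your skeleton matches the paper's proof. It runs through the Mori-cone dichotomy (an automorphism of $M=\PP(T\PP^2)$ preserves or swaps the two rulings), descent to a plane, reduction to $\Aut(\PP^2,X)\cong X[3]\rtimes\{1,\sigma\}$ of order $18$, and identification with twists and dualization. Passing through the branch curve $\check X$ and biduality instead of using $\Sigma=\Pi^{-1}(X)$ directly is harmless, and your injectivity argument in Step~3 is more explicit than the paper's.

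\textbf{The gap.} It is exactly the step you flag: you never exclude ruling-swapping automorphisms. Your guessed class is wrong, and the fallback is not an option. Composing a swap $\phi$ with a correlation $c$ does not help. The map $c\circ\phi$ no longer preserves $\Sigma$, so Step~2 does not apply to it. Meanwhile $\phi$ itself would still be an extra element of $\Aut(M,\Sigma)$, and the group would have order $36$. The swap must be ruled out.

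\textbf{How to close it along your own route.} As in Theorem~\ref{thm:PTP2}, let $\Pi$ denote the projection of $M$ onto the plane containing the cubic $X$; Tu's projection is $\check\Pi$ there. Let $h$ and $h'$ be the pull-backs of the hyperplane classes from Tu's plane and from the plane of $X$. On $M$ one has $h^3=h'^3=0$, $h^2h'=hh'^2=1$ and $K_M=-2h-2h'$. Write $[\Sigma]=xh'+yh$.
\begin{itemize}
\item The degree-$3$ condition over Tu's plane reads $[\Sigma]\cdot h^2=x=3$.
\item Adjunction, together with $K_\Sigma^2=0$ (valid for any $\PP^1$-bundle over an elliptic curve, such as $S_1$), gives $(K_M+\Sigma)^2\cdot\Sigma=y(5y-9)=0$. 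Hence $y=0$ and $[\Sigma]=3h'$, not $3h_1+h_2$.
\item This is also immediate from Theorem~\ref{thm:PTP2}: $\Sigma=\Pi^{-1}(X)$ maps onto the curve $X$ in the second plane.
\item A swap would satisfy $\phi^*h=h'$ and $\phi^*h'=h$, forcing $3h'=3h$, which is a contradiction.
\end{itemize}
The paper argues differently: $\Sigma\cong S_1$ has a unique ruling, namely $\Pi|_{\Sigma}$. So $\phi$ sends $\Pi$-fibres to $\Pi$-fibres and cannot swap.

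\textbf{Two smaller corrections.}
\begin{enumerate}
\item In Step~3, surjectivity should be justified by $\Sigma=\Pi^{-1}(X)$ rather than by ``tangency data''.
\item In Step~4, the dual flag carries weights $(-\mu_3,-\mu_2,-\mu_1)$. So dualization flips the sign of $\mu_2$ and sends $(M^\pm,\Sigma^\pm)$ to $(M^\mp,\Sigma^\mp)$. No Hecke modification repairs this: the combinations that restore trivial determinant reduce to twists, which leave the chamber unchanged. Instead, compose with $\Psi$ from Theorem~\ref{thm:MminequalMplus}. Since $\Psi$ covers the identity on Tu's plane, the induced action is still the elliptic involution.
\end{enumerate}
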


{\bf{Acknowledgements}}: We thank Maycol Falla Luza for giving us the description (and proof) for the automorphisms of $\PP(T\PP^2)$.

\section{Rank 3 semistable vector bundles on an elliptic curve}
In this section, we review some known results about semistable vector bundles of degree $0$ on the elliptic curve $X$, their moduli space following Atiyah and Tu, their automorphisms and their flat structure. We end by constructing an analytic universal family.

\subsection{Atiyah's classification in rank 3 and degree 0}
Let $X$ denote an elliptic curve, $\mc{O}_X$ the trivial line bundle on $X$. 
Following Atiyah's seminal paper \cite{atiyah57}, there are
unique non trivial extensions 
$$\mc{E}_2 \in \Ext(\mc{O}_X,\mc{O}_X)\ \ \ \text{and}\ \ \ 
\mc{E}_3 \in \Ext(\mc{E}_2,\mc{O}_X).$$
Moreover, the unique degree $0$ subbundles are
$\mc{O}_X\subset\mc{E}_2\subset\mc{E}_3$.

Recall that for a vector bundle $E$ of rank $r$ and degree $d$, the \emph{slope} of $E$, denoted $\mu(E)$ is defined as $d/r$. We say $E$ is (slope) \emph{(semi)stable} if, for any proper subbundle $F$ of $E$, $\mu(F)(\leq) < \mu(E)$. 
Then, following Atiyah's classification \cite{atiyah57}, there are $3$ types of (strictly) semi-stable vector bundles (and no stable vector bundles) of degree $0$.

\begin{thm}[{\cite{atiyah57}}]\label{Atiyah:semistablebundles} 
For rank $3$, determinant $\mc{O}_X$, the only semi-stable bundles on $X$ are 
\begin{enumerate}
    \item $E = L_i \oplus L_{j} \oplus (L_{ij})^{-1}$ for $L_{i}, L_{j} \in \Jac(X)$.
    \item $E = L^{-2} \oplus (\mc{E}_{2}\otimes L)$, for $L \in \Jac(X)$. 
    \item $E = \mc{E}_3 \otimes L$, for $L$ a $3$-torsion line bundle. 
\end{enumerate}
\end{thm}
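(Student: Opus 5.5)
The plan is to combine Atiyah's classification of indecomposable bundles on $X$ with the observation that every summand of a semistable degree-$0$ bundle has degree $0$, and then impose the determinant condition.

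\textbf{Reduction to degree-zero summands.} Let $E$ be semistable of rank $3$ with $\det E\cong\mc{O}_X$. By Krull--Schmidt, write $E=\bigoplus_k F_k$ with each $F_k$ indecomposable. Each $F_k$ is a subbundle, so $\deg F_k\le 0$ by semistability. Since $\sum_k\deg F_k=\deg E=0$, every $F_k$ has degree $0$.

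\textbf{Indecomposable degree-zero bundles.} Atiyah's theorem states that an indecomposable bundle of rank $r$ and degree $0$ on $X$ is $\mc{E}_r\otimes L$ for a unique $L\in\Jac(X)$, where $\mc{E}_r$ is the iterated unipotent extension ($\mc{E}_1=\mc{O}_X$, and $\mc{E}_2,\mc{E}_3$ as above). Conversely, each $\mc{E}_r\otimes L$ is semistable, because it is a successive extension of copies of $L$. Direct sums of semistable bundles of equal slope are again semistable. So the semistable bundles of rank $3$ and degree $0$ are exactly the direct sums of such bundles.

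\textbf{Enumeration by partitions of $3$.} Since $\det(\mc{E}_r\otimes L)=L^r$ (as $\det\mc{E}_r$ is trivial), there are three cases.
\begin{enumerate}
\item Partition $1+1+1$: $E=L_1\oplus L_2\oplus L_3$ with $L_1L_2L_3\cong\mc{O}_X$, so $L_3=(L_1L_2)^{-1}$. This is case (1).
\item Partition $2+1$: $E=(\mc{E}_2\otimes L)\oplus L'$ with $L^2L'\cong\mc{O}_X$, so $L'=L^{-2}$. This is case (2).
\item Partition $3$: $E=\mc{E}_3\otimes L$ with $L^3\cong\mc{O}_X$, so $L$ is $3$-torsion. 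This is case (3).
\end{enumerate}

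Every bundle in this list is semistable but not stable, since each contains a degree-$0$ line subbundle. This gives the parenthetical claim that there are no stable bundles.

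The only nontrivial input is Atiyah's classification of indecomposable bundles, in particular that degree-$0$ indecomposables are twists of $\mc{E}_r$. We cite this from \cite{atiyah57} rather than reprove it. If one wanted to prove it, the key step would be that an indecomposable degree-$0$ bundle $F$ has $h^0(F\otimes L^{-1})=1$ for some $L$, followed by an induction on the rank using the uniqueness of nontrivial extensions $\Ext^1(\mc{E}_{r-1},\mc{O}_X)\cong\C$.
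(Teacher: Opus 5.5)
Your proof is correct and follows essentially the same route as the paper, which gives no argument of its own and simply cites \cite{atiyah57}. Your deduction is the standard way to extract the statement from Atiyah's classification. By Krull--Schmidt and semistability, every indecomposable summand has degree $0$; Atiyah identifies degree-$0$ indecomposables as $\mc{E}_r\otimes L$; and the condition $\det(\mc{E}_r\otimes L)=L^r$ then sorts the partitions of $3$ into the three listed cases. You also cover the converse direction (each listed bundle is semistable but not stable).
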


Note that since we require the bundles to be semistable, the line bundles $L_k$ must all be of the same degree. Furthermore, since we require the determinant to be trivial, this forces the bundles $L_{k}$ to be in the Jacobian rather than just the Picard group. 

\subsection{Tu's moduli space} 
We want to describe the moduli space of semistable vector bundles of rank $3$ and trivial determinant.
It is easy to see that the set of vector bundles of type (1) in Theorem \ref{Atiyah:semistablebundles} are parametrised by a $2$-dimensional space, type (2) by the Jacobian of the curve, and 
type (3) by a finite set of points (the $3$-torsion line bundles). We now show that in fact the three types fit into a single smooth irreducible moduli space. We will be using these identifications in later sections.

\begin{defi}
 For a semistable vector bundle $E$ of slope $\bmu$ there is a natural  \emph{Jordan--H\"older filtration}
\begin{equation*}
     0=E_0\subseteq E_1\subseteq \cdots\subseteq E_s=E
 \end{equation*}
 by subbundles $E_i$ such that the quotients $E_i/E_{i-1}$, called the \emph{Jordan--H\"older factors}, are stable vector bundles of slope $\bmu$ for all $i=1,\ldots, s$. We associate to $E$ its \emph{graded vector bundle} 
 \begin{equation*}
     \gr(E)=\bigoplus_{i=1}^s E_i/E_{i-1} 
 \end{equation*}
 and say that two semistable vector bundles $E$ and $G$ are \emph{S-equivalent} if $\gr(E)\cong \gr(G)$.
\end{defi}

\begin{rem} We know that vector bundles on a Riemann surface are topologically classified by their rank and first Chern class (i.e degree). The vector bundles $\mc{E}_{2}\otimes L$ and $L \oplus L$ which are both of rank $2$ and degree $0$ are therefore topologically isomorphic. However, since the vector bundle $\mc{E}_{2}\otimes L$ is indecomposable, but the vector bundle $L \oplus L$ is decomposable, they are not algebraically (and even holomorphically) isomorphic. But they are $S$-equivalent. To see this note that by definition for the vector bundle $\mc{E}_{2}\otimes L$ we have 
\begin{equation*}
     L \to \mc{E}_{2}\otimes L \to L.
 \end{equation*}
  Therefore $\gr{\mc{E}_{2}\otimes L} = L\oplus L$. Hence $\mc{E}_{2}\otimes L$ is S-equivalent to $L \oplus L$. Furthermore, S-equivalence is preserved by taking direct sums with fixed bundles, so $L^{-2} \oplus (\mc{E}_{2}\otimes L)$ is S-equivalent to $L^{-2} \oplus L \oplus L$.   
Similarly, one can show that $\mc{E}_{3}\otimes L$ is S-equivalent to $L \oplus L \oplus L$. We will often be working up to $S$-equivalence and would therefore be identifying the semistable bundle  $L^{-2} \oplus (\mc{E}_{2}\otimes L)$ with $L^{-2} \oplus L \oplus L$ and the semistable bundle $\mc{E}_{3}\otimes L$ with $L \oplus L \oplus L$.
\end{rem}

We denote by the $\mathrm{Bun}_{\mc{O}_X}(3,X)$ moduli space of S-equivalence classes of semistable vector bundles of rank $3$ on the elliptic curve $X$ with trivial determinant $\mc{O}_X$. 

By \cite{Tu}, we know that $\mathrm{Bun}_{\mc{O}_X}(3,X)$ is isomorphic to $\PP^{2}$. The following theorem  gives the bijection between the S-equivalence classes of semistable vector bundles and $\check{\PP}^{2}$. This can be seen geometrically as follows: 

\begin{thm}\label{thm:TuExplicitIsomorphism}(Tu's isomorphism)
Let $X$ be defined as a smooth cubic curve in $\PP^{2}$, with neutral element denoted by $\infty$. Let $\mc{L}\in\check{\PP}^{2}$ be a line, intersecting the curve $X$ in $3$ points $p_1, p_2$ and $p_{3}$ (possibly with repetitions). Then we define an isomorphism 
\begin{align*}
 \psi: \check{\PP}^{2} \stackrel{\sim}{\longrightarrow} \mathrm{Bun}_{\mc{O}_X}(3,X)\\
 \mc{L} \mapsto \bigoplus\limits_{i=1}^{3} \mc{O}_X(p_{i}-\infty)\\
\end{align*}

\begin{enumerate}
\item A generic vector bundle $E = L_1 \oplus L_2 \oplus L_3$ with $L_{i} \neq L_{j}$ corresponds to a line $\mc{L}$ that intersects the curve $X$ through distinct points $p_1$, $p_2$ and $p_3$: the alignment of the three points corresponds to the condition 
$\det(E)=L_1 \otimes L_2 \otimes L_3=\mc{O}_X$.
\item The vector bundle $L^{-2} \oplus (\mc{E}_{2}\otimes L)$ is identified with the vector bundle $L^{-2} \oplus L \oplus L$ by S-equivalence. Through Tu isomorphism, these vector bundles correspond to lines $\mc{L}$ tangent to the cubic curve $X$ at $p$, with $L=\mc{O}(p-\infty)$. 
The line $\mc{L}$ intersects the curve $X$ at another point $q$ such that $L^{-2} = \mc{O}(q-\infty)$.   
\item Similarly, the vector bundle $\mc{E}_{3} \otimes L$ is identified with the vector bundle $L \oplus L \oplus L$; they correspond to a tangent line $\mc{L}$ in a flex $p\in X$ such that $L = \mc{O}(p-\infty)$. 
\end{enumerate}
\end{thm}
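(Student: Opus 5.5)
The plan is to prove that $\psi$ is a well-defined bijection onto S-equivalence classes, and then to see that it is a morphism with a morphism as inverse, using Tu's result that $\mathrm{Bun}_{\mc{O}_X}(3,X)\simeq\PP^2$.

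\emph{Well-definedness.} Embed $X\subset\PP^2$ by the linear system $|3\infty|$, so that $\infty$ is a flex. The group law is characterised by the rule that $p_1+p_2+p_3=0$ in $\Jac(X)$ if and only if $p_1,p_2,p_3$ are the (multiplicity-counted) intersection of $X$ with a line. Equivalently, $\mc{O}_X(1)\simeq\mc{O}_X(3\infty)$ and a line $\mc{L}$ cuts the divisor $p_1+p_2+p_3\in|3\infty|$. Hence
$$\bigotimes_i\mc{O}_X(p_i-\infty)=\mc{O}_X(p_1+p_2+p_3-3\infty)\simeq\mc{O}_X .$$
So $\psi(\mc{L})$ is a semistable bundle of degree $0$ with trivial determinant, and $\psi$ lands in $\mathrm{Bun}_{\mc{O}_X}(3,X)$.

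\emph{Bijectivity.} By Theorem \ref{Atiyah:semistablebundles} and the preceding Remark, every S-equivalence class contains exactly one bundle of the form $L_1\oplus L_2\oplus L_3$ with $L_i\in\Jac(X)$ and $L_1L_2L_3=\mc{O}_X$. This bundle is determined by the unordered triple $\{L_i\}$, by Krull--Schmidt. Through the Abel--Jacobi isomorphism $X\to\Jac(X)$, $q\mapsto\mc{O}_X(q-\infty)$, such unordered triples correspond to effective divisors $D=p_1+p_2+p_3$ with $\mc{O}_X(D)\simeq\mc{O}_X(3\infty)$, that is, to points of the linear system $|3\infty|\simeq\PP(H^0(\mc{O}_X(1)))$. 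Since $X$ is not contained in a line, restriction $H^0(\PP^2,\mc{O}(1))\to H^0(X,\mc{O}_X(1))$ is injective. Both spaces have dimension $3$ (by Riemann--Roch), so restriction is an isomorphism. Therefore every divisor in $|3\infty|$ is cut by a unique line, and $\psi$ is a bijection $\check{\PP}^2\to\mathrm{Bun}_{\mc{O}_X}(3,X)$.

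\emph{The three cases.}
\begin{enumerate}
\item Distinct $L_i$ correspond to distinct points $p_i$, i.e.\ to a line meeting $X$ transversally.
\item If exactly two of the $L_i$ coincide, say $L_1=L_2=L=\mc{O}(p-\infty)$, then $D=2p+q$, so $\mc{L}$ is tangent to $X$ at $p$. Moreover $L_3=\mc{O}(q-\infty)=L^{-2}$ by the determinant condition.
\item If all three coincide, $D=3p$, so $\mc{L}$ is the flex tangent at $p$, and $L^3=\mc{O}_X$.
\end{enumerate}
Matching these with the S-equivalence identifications of the Remark gives items (1)--(3).

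\emph{Algebraicity.} This is the main obstacle, since Tu's space is a coarse moduli space. I would first show that $\psi$ is a morphism. Over $\check{\PP}^2$ take the incidence family $\mathcal{D}\subset\check{\PP}^2\times X$ of the divisors $\mc{L}\cap X$. The associated family $\mathrm{pr}_*\bigl(\mc{O}(\mathcal D)\otimes \mc{O}(-3\infty)|_{\mathcal D}\bigr)$-type construction (pushforward of $\mc{O}_{\mathcal D}$ twisted suitably) gives a flat family of rank $3$ semistable bundles. Its fibres are S-equivalent to $\bigoplus\mc{O}(p_i-\infty)$; one can use $\pi_*$ of $\mc{O}_{X\times\mathcal{D}}(\Delta)$, with a finite-cover argument near non-reduced divisors if needed. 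The coarse moduli property then yields a morphism $\check{\PP}^2\to\mathrm{Bun}_{\mc{O}_X}(3,X)$. This morphism is bijective between normal projective varieties (the target is $\PP^2$ by Tu), so by Zariski's main theorem it is an isomorphism.

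Alternatively, one can avoid the family. Compose with the natural map $\mathrm{Bun}_{\mc{O}_X}(3,X)\to\mathrm{Sym}^3X$, $E\mapsto\{L_i\}$, which is a morphism via determinants of twisted sections, i.e.\ theta divisors. This identifies the target with the fibre of the sum map $\mathrm{Sym}^3X\to X$ over the neutral element, which is exactly $|3\infty|$.
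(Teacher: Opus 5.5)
Your argument is correct, and it goes further than the paper. The paper gives no proof of this statement: it quotes Tu for the abstract isomorphism $\mathrm{Bun}_{\mc{O}_X}(3,X)\cong\PP^2$ and then only describes the pointwise correspondence of items (1)--(3) through the group law on the cubic:
\begin{itemize}
\item collinearity corresponds to trivial determinant;
\item tangency corresponds to a repeated factor;
\item a flex tangent corresponds to a triple factor.
\end{itemize}
Your bijectivity step and case analysis are exactly this description, made precise by Atiyah's classification, Krull--Schmidt, and the identification $|3\infty|\cong\check{\PP}^2$ coming from $H^0(\PP^2,\cO(1))\cong H^0(X,\cO_X(3\infty))$. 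You also rightly make explicit that $\infty$ must be a flex. The statement leaves this implicit, but without it $\det\psi(\mc{L})$ would not be trivial.

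What your route adds is the last step. An abstract isomorphism with $\PP^2$ does not by itself show that this particular $\psi$ is an isomorphism of varieties. Your argument (a family over $\check{\PP}^2$, the coarse moduli property, and Zariski's main theorem) does show this. It only needs the moduli space to be normal, not Tu's identification with $\PP^2$.

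Do state the family cleanly, though. The first expression you write is not a family of bundles on $X$: pushing forward along $\mathcal D\to\check{\PP}^2$ just gives a rank $3$ bundle on $\check{\PP}^2$. The right object is your second one, with the twist made explicit. Take $\pi_*\bigl(\cO(\Delta)\otimes\mathrm{pr}_X^*\cO_X(-\infty)\bigr)$, where $\pi:X\times\mathcal D\to X\times\check{\PP}^2$ is finite flat of degree $3$. Here $\Delta$ is the graph of $\mathcal D\to X$; it is Cartier because $\mathcal D$ is a $\PP^1$-bundle over $X$, hence smooth.

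This pushforward is locally free and commutes with base change, so no finite-cover argument is needed. Fibrewise it is the Fourier--Mukai transform, with Poincar\'e kernel, of $\cO_{\mc{L}\cap X}$. Over $2p+q$ and $3p$ it therefore gives the indecomposable bundles $L^{-2}\oplus(\mc{E}_2\otimes L)$ and $\mc{E}_3\otimes L$. So your family is a global algebraic counterpart of the local analytic family of Proposition~\ref{prop:universalparabolicbundle}. It consists exactly of the types (1), (2.1), (3.1), which are the ones carrying stable parabolic structures.
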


From this theorem, it is convenient to refine the list of semistable vector bundles (see Theorem \ref{Atiyah:semistablebundles}) into $6$ \emph{types}:
\begin{equation}\label{eq:typesofssvb}
    \begin{matrix}
     (1) & L_1 \oplus L_2 \oplus L_3 \hfill & L_i\in\Jac(X),\ L_{i}\not= L_{j},\ L_1 \otimes L_2 \otimes L_3=\cO_X\\
     \begin{matrix}
         (2.1) \\ (2.2)
     \end{matrix}
     &
     \left.\begin{matrix}
         L^{-2}\oplus(\mc{E}_{2}\otimes L) \\ L^{-2}\oplus (L\oplus L)\hfill
     \end{matrix}\right\}
     &
     L \in \Jac(X),\ L^{ 3}\not=\mc{O}_X\hfill\\
     \begin{matrix}
         (3.1) \\ (3.2) \\ (3.3)
     \end{matrix}
     &
     \left.\begin{matrix}
         \mc{E}_{3}\otimes L\hfill \\ L\oplus(\mc{E}_{2}\otimes L)\\ L\oplus L\oplus L\hfill
     \end{matrix}\hfill\right\}\hfill
     &
     L \in \Jac(X),\ L^{ 3}=\mc{O}_X\hfill
\end{matrix}
\end{equation}

\subsection{Subbundles of degree 0}

We list all possible subbundles of degree $0$ of $E$ depending on its type.

\begin{lem} \label{lem-sameslopeandrank}
    Let $E_1, E_2$ be vector bundles of the same rank and same degree over a curve $X$. If a morphism $\Phi : E_1 \rightarrow E_2$ is an isomorphism in restriction to the fiber $E_1\vert_x$ at some point $x\in X$, then it is an isomorphism globally.
\end{lem}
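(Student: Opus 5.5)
We pass to determinants. Let $r$ be the common rank of $E_1$ and $E_2$. The morphism $\Phi$ induces $\det\Phi=\wedge^r\Phi:\det E_1\to\det E_2$, which we view as a global section
$$s=\det\Phi\in H^0\bigl(X,\det E_2\otimes(\det E_1)^{-1}\bigr).$$
The line bundle $N:=\det E_2\otimes(\det E_1)^{-1}$ has degree $\deg E_2-\deg E_1=0$, because the two bundles have the same degree.

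First, since $\Phi\vert_x:E_1\vert_x\to E_2\vert_x$ is an isomorphism of $r$-dimensional vector spaces, its determinant is nonzero. Hence $s(x)\neq 0$, and in particular $s$ is a nonzero section of $N$.

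Next, a nonzero section of a line bundle on the smooth projective (connected) curve $X$ has an effective zero divisor $D=\mathrm{div}(s)$ with $\mathcal{O}_X(D)\cong N$. Therefore $\deg D=\deg N=0$. An effective divisor of degree $0$ is zero, so $s$ vanishes nowhere and, incidentally, $N\cong\mathcal{O}_X$.

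Finally, $s$ being nowhere zero means that $\det(\Phi\vert_y)\neq 0$ for every $y\in X$, so $\Phi$ is an isomorphism on every fiber. A morphism of vector bundles (locally free sheaves of the same rank) that is fiberwise an isomorphism is an isomorphism. Indeed, in local trivializations $\Phi$ is a matrix of regular functions whose determinant is a unit, so its inverse matrix is again regular. This gives the global inverse.

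There is no real obstacle here. The only point to state carefully is that connectedness of $X$ is used in the degree argument: the equality $\deg D=0$ forcing $D=0$ requires $s$ to be nonzero on all of $X$, which holds because $X$ is irreducible and $s(x)\neq0$.
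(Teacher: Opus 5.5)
Your proof is correct and follows the same route as the paper: view $\det\Phi$ as a section of a degree-$0$ line bundle, note that it is nonzero at $x$, conclude that it vanishes nowhere, and deduce that $\Phi$ is an isomorphism on every fiber and hence globally. Your version is slightly more careful than the paper's, which writes the line bundle as $\det(E_1)\otimes\det(E_2)^*$ instead of $\det(E_2)\otimes\det(E_1)^{-1}$ and does not spell out the effective-divisor step or the use of connectedness, but the argument is the same.
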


\begin{proof}
    The morphism $\det(\Phi):\det(E_1)\to\det(E_2)$ must be non zero at $x\in X$. Therefore, it defines a non trivial section of $\det(E_1)\otimes\det(E_2)^*$. But this 
    latter is a line bundle of degree $0$: therefore,  it is the trivial line bundle, and $\det(\Phi)$ is non vanishing. We deduce that $\Phi$ is an isomorphism.
\end{proof}

We will need the following: 

\begin{lem}\label{lem:constantrank}
Let $E$ be semistable of rank $\le3$, and $F_1,F_2\subset E$
be subbundles, all three having degree $0$. Then $F_1+F_2$ and $F_1\cap F_2$
have constant rank, therefore defining subbundles. In particular, if $E\vert_p=F_1\vert_p\oplus F_2\vert_p$ at some point $p\in X$, then $E=F_1\oplus F_2$ globally.
\end{lem}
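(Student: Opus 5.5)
Consider the natural morphism of sheaves $\varphi: F_1\oplus F_2\to E$, $(s_1,s_2)\mapsto s_1+s_2$. Its image $I$ is the subsheaf $F_1+F_2\subset E$ and its kernel $K\cong F_1\cap F_2$ (embedded antidiagonally). Both are subsheaves of vector bundles on a smooth curve, hence torsion free and therefore locally free. The goal is to show that $F_1\cap F_2$ and $F_1+F_2$ are saturated in $E$, so that their fibrewise ranks are constant and they are subbundles.

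The argument is a degree count using semistability. Since $E$ is semistable of degree $0$, every subsheaf of $E$ has degree $\le 0$; in particular $\deg I\le 0$ and $\deg K\le 0$, because $K\subset F_1$ and $F_1$ is semistable of degree $0$. (Any subbundle of degree $0$ of a semistable degree $0$ bundle is itself semistable.) From the exact sequence
$$0\to K\to F_1\oplus F_2\to I\to 0$$
we get $\deg K+\deg I=\deg F_1+\deg F_2=0$. Hence $\deg K=\deg I=0$.

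Now let $\bar I$ be the saturation of $I$ in $E$. It is a subbundle with $\deg\bar I=\deg I+\mathrm{length}(\bar I/I)$, and semistability gives $\deg\bar I\le 0$. So the length is $0$, and $I=\bar I$ is a subbundle. Likewise $K=F_1\cap F_2$ has degree $0$ inside $F_1$, which is semistable of degree $0$, so $K$ is saturated in $F_1$. Being saturated in the subbundle $F_1$, it is saturated in $E$. This settles constant rank. The rank hypothesis $\le 3$ does not seem to be needed here; I would keep it in mind only in case Atiyah's classification is required. The main thing to check carefully is that a subsheaf of the same rank as its saturation, with equal degree, coincides with it; this is standard.

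For the last assertion, suppose $E|_p=F_1|_p\oplus F_2|_p$. Then $\mathrm{rk}F_1+\mathrm{rk}F_2=\mathrm{rk}E$. The map $\varphi:F_1\oplus F_2\to E$ goes between bundles of the same rank and the same degree $0$, and it is an isomorphism on the fibre at $p$. By Lemma \ref{lem-sameslopeandrank} it is then a global isomorphism, so $E=F_1\oplus F_2$.
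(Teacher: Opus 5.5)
Your proof is correct, but it takes a different route from the paper's.

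\textbf{The paper's route.} The paper argues case by case on $(\mathrm{rk}\,F_1,\mathrm{rk}\,F_2)$.
\begin{itemize}
\item Complementary ranks are settled by Lemma \ref{lem-sameslopeandrank} at a point where the fibres are transverse.
\item For two distinct rank-$2$ subbundles of a rank-$3$ bundle, it first builds by hand, in local coordinates, a line subbundle $L\subset F_1\cap F_2$. Semistability gives $\deg L\le 0$. It then passes to $E/L$ and gets $\deg L\ge 0$ from a nonzero morphism of line bundles $L\otimes(F_1/L)\to (L\otimes(E/L))/(L\otimes(F_2/L))$. That morphism is forced to be an isomorphism, which gives transversality everywhere.
\item The rank-$1$/rank-$1$ case is left as ``dual''.
\end{itemize}

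\textbf{Your route.} You replace all of this with the single sequence $0\to F_1\cap F_2\to F_1\oplus F_2\to F_1+F_2\to 0$, the additivity of degree, and a saturation argument. This is the standard proof that semistable bundles of fixed slope form an abelian category. It is the same fact the paper later quotes from Le Potier in the remark after Proposition \ref{prop:SubbundlesOfDegree0}.

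\textbf{Comparison.} Your approach needs no case analysis and no local coordinates. The line subbundle the paper constructs by hand is automatic for you, since the sheaf intersection is torsion-free and hence locally free. You also never use $\mathrm{rk}\,E\le 3$, or even genus $1$. The same count works on any smooth projective curve for $E$ semistable of slope $\mu$ and $F_1,F_2$ of slope $\mu$. The paper's argument buys only concreteness in the low-rank cases it needs. The final assertion is handled exactly as in the paper.

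\textbf{One step to make explicit.} The statement is about fibrewise ranks, while you prove that certain subsheaves are saturated. Close that gap in one line. Since $I=F_1+F_2$ is a subbundle, $I|_x\to E|_x$ is injective, and $(F_1\oplus F_2)|_x\to I|_x$ is surjective by right exactness. Hence $F_1|_x+F_2|_x=I|_x$ has constant dimension. Then $\dim(F_1|_x\cap F_2|_x)=\mathrm{rk}\,F_1+\mathrm{rk}\,F_2-\mathrm{rk}\,I$ is constant as well. Equivalently, restricting your exact sequence of vector bundles to $x$ identifies $F_1|_x\cap F_2|_x$ with $K|_x$.
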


\begin{proof}
The proof is a case-by-case analysis discussing on the rank of $F_1$ and $F_2$. Of course, if $F_1$ or $F_2$ is trivial (either $\{0\}$ or $E$), then the statement is obvious.
We first prove the second claim of the statement. If $E\vert_p=F_1\vert_p\oplus F_2\vert_p$, then we can apply Lemma \ref{lem-sameslopeandrank} to the morphism
$$\Phi:F_1\oplus F_2\to E\ ;\ (u,v)\mapsto u+v$$
which is bijective at $p$, and deduce the claim.

Note that if $\rank(E)=2$, and $\rank(F_1)=\rank(F_2)=1$, then either $F_1=F_2$, or $E\vert_p=F_1\vert_p\oplus F_2\vert_p$ at some $p\in X$. In the second case, we can conclude $E=F_1\oplus F_2$. For both of these cases, the statement holds. Now assume $\rank(E)=3$ and $\rank(F_1)=1$ and $\rank(F_2)=2$. If $F_1\subset F_2$, then the statement is obvious; and if not, then $E\vert_p=F_1\vert_p\oplus F_2\vert_p$ at some $p\in X$ and we can also conclude as before.

 Let us now assume $\rank(E)=3$ and $\rank(F_1)=\rank(F_2)=2$ and $F_1\not=F_2$ (otherwise, the statement is obvious). Then, {\it we claim that $F_1\cap F_2$ contains a line subbundle $L$}. 
 Clearly, their intersection defines a line bundle except possibly at points $x\in X$ where $F_1\vert_x=F_2\vert_x$. We have to check that this line bundle extends at those special points $x$. In the neighborhood $U$ of such a point, in a local coordinate $z$, we can assume $E\vert_U=U\times\mathbb{C}^3$, with vertical coordinate $W=\begin{pmatrix}
    w_1\\ w_2\\ w_3
\end{pmatrix}$,  $F_1=\mathbb{C}^2\times\{0\}$ defined by $w_3=0$
and $F_2$ given by a holomorphic equation $f_1(z)w_1+f_2(z)w_2+f_3(z)w_3=0$
where $f_3(0)=0$ and $f_1,f_2$ not vanishing at the same time.
Then the line subbundle is locally defined by $f_1(z)w_1+f_2(z)w_2=w_3=0$:
the local non-vanishing holomorphic section 
$W=\begin{pmatrix}
    f_2(z)\\ -f_1(z)\\ 0
\end{pmatrix}$
is generating $L$ locally. {\it This proves the claim}.
Now, since $E$ is semistable of degree $0$, then $\deg(L)\le 0$.
The rank $2$ bundle $E'=L\otimes(E/L)$ has degree $=\deg(L)$ and contains the two rank $1$ subbundles $F_i'=L\otimes(F_i/L)$, both having degree $0$.
We note that $F_1'\vert_x=F_2'\vert_x$ if, and only if $F_1\vert_x=F_2\vert_x$ for any $x\in X$. Assuming $F_1\not=F_2$,
we have $F_1'\not=F_2'$, and the composition 
$$F_1'\to E'\to E'/F_2'$$ defines a non trivial morphism $\varphi$ between line bundles.
This implies that $\deg(F_1')\le\deg(E'/F_2')$,
and therefore $0\le\deg(L)$; we deduce $\deg(L)=0$, $\deg(F_1')=\deg(E'/F_2')$ and $\varphi$
is an isomorphism. Consequently, $F_1'\vert_x\not=F_2'\vert_x$ for all $x\in X$ and $F_1\cap F_2$ has constant rank: $L=F_1\cap F_2$.

Finally, let us assume $\rank(F_1)=\rank(F_2)=1$. In a very similar way, one can show that $F_1 + F_2$ is contained in a rank $2$ subbundle $F$, and then discuss on the morphism $F_1\oplus F_2\to F$. In fact, this case is dual to the previous one, and therefore very similar.
\end{proof}

\begin{cor}\label{cor:injectiverestriction}
    Let $E$ be semistable of rank $\le3$ and degree $0$ and let $x\in X$.
    Then, any subbundle $F\subset E$ of degree $0$ is determined by its restriction
    $F\vert_x\subset E\vert_x$ to the fiber at $x$.
\end{cor}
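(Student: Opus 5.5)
Suppose $F_1,F_2\subset E$ are two degree $0$ subbundles with $F_1\vert_x=F_2\vert_x$ inside $E\vert_x$; the plan is to show $F_1=F_2$ using Lemma \ref{lem:constantrank}. That lemma applies to $E$ of rank $\le 3$, semistable of degree $0$, with $F_1,F_2$ of degree $0$. It says that $F_1\cap F_2$ (and $F_1+F_2$) has constant rank and is therefore a subbundle. The key step is to compare ranks at the point $x$.

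First, the two subbundles have the same rank, since $\rank F_i=\dim F_i\vert_x$ and the fibers at $x$ agree. Set $r$ for this common rank. At $x$, the fiber of the intersection is
$$(F_1\cap F_2)\vert_x=F_1\vert_x\cap F_2\vert_x=F_1\vert_x,$$
which has dimension $r$. By Lemma \ref{lem:constantrank} the rank of $F_1\cap F_2$ is constant, so it equals $r$ at every point. Hence $F_1\cap F_2$ is a rank $r$ subbundle of $F_1$. For each $y$, the fiber $(F_1\cap F_2)\vert_y\subseteq F_1\vert_y$ has full dimension, so it equals $F_1\vert_y$, and likewise it equals $F_2\vert_y$. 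Therefore $F_1=F_1\cap F_2=F_2$ fiberwise at every point, and so as subbundles.

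There is a subtle point: we must make sure the fiber of $F_1\cap F_2$ at $x$ is really the intersection of the fibers. The lemma's meaning of "constant rank" is that the fiberwise intersections $F_1\vert_y\cap F_2\vert_y$ all have the same dimension, and this is exactly what its proof establishes. Read that way, the argument above is immediate.

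As an alternative route, avoiding intersections, one can use the sum. Here $\dim(F_1\vert_y+F_2\vert_y)$ is constant and equals $r$ at $x$, so $F_1\vert_y=F_2\vert_y$ for all $y$.

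The degenerate cases, where $F$ is $0$ or $E$, are trivial. I expect no real obstacle beyond invoking the constant-rank statement correctly.
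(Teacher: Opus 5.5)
Your argument is correct and is essentially the paper's: the corollary is stated there as an immediate consequence of Lemma \ref{lem:constantrank}, since $F_1\vert_x=F_2\vert_x$ forces the constant rank of $F_1\cap F_2$ (equivalently of $F_1+F_2$) to equal $\rank F_1=\rank F_2$ at every point. Your reading of ``constant rank'' as constancy of $\dim(F_1\vert_y\cap F_2\vert_y)$ is the right one, and it is what the case analysis in the lemma's proof establishes: each case ends either with $F_1=F_2$, $F_1\subset F_2$, or with the two subbundles in the same relative position on every fiber (for instance $E=F_1\oplus F_2$ via Lemma \ref{lem-sameslopeandrank}).
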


\begin{lem} \label{lem-degree0inrank2} Let $E$ be a semistable vector bundle in $\mathrm{Bun}_{\mc{O}_X}(2,X)$. We can describe strict subbundles $F\subset E$ of degree $0$ as follows:
\begin{itemize}
    \item if $E = L \oplus L^{-1} $, with $L^{ 2}\not\cong\cO_X$, then $F = L$ or $L^{-1}$ (one of the two factors);
     \item if $E = L \oplus L $, with $L^{ 2}\cong\cO_X$, then $F \cong L$ and there is a $1$-parameter family of such subbundles;
    \item if $E = \E_2 \otimes L$, with $L^{ 2}\cong\cO_X$, then $F = L$. 
\end{itemize}    
\end{lem}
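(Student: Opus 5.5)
A degree $0$ line subbundle $F\subset E$ is the image of a nonzero morphism $F\to E$, so the plan is to compute $\Hom(F,E)$ for each type of $E$ and read off which $F$ can occur. The basic input is that for $M\in\Jac(X)$ we have $H^0(M)\neq 0$ if and only if $M\cong\cO_X$. Also, a nonzero morphism between degree $0$ line bundles is an isomorphism. Recall that, since $\det E=\cO_X$, the three listed cases are exactly the rank $2$ types in Atiyah's classification.

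\emph{Case $E=L\oplus L^{-1}$ with $L^2\not\cong\cO_X$.} A nonzero map $F\to E$ has components $F\to L$ and $F\to L^{-1}$, and at least one of them is nonzero. Say $F\to L$ is nonzero; then it is an isomorphism, so $F\cong L$. Then $F\to L^{-1}$ is a section of $L^{-2}\neq\cO_X$ of degree $0$, hence vanishes. So $F$ is exactly the factor $L$. The case $F\to L^{-1}$ nonzero is symmetric and gives the factor $L^{-1}$.

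\emph{Case $E=L\oplus L$ with $L^2\cong\cO_X$.} Each component of a nonzero map $F\to E$ is either zero or an isomorphism, so $F\cong L$. Then $\Hom(L,L\oplus L)=\C^2$, and its nonzero elements, up to scalars, give the subbundles $L\cdot(a,b)$ for $[a:b]\in\PP^1$. These are genuine subbundles, since the map has constant rank, and they are pairwise distinct. This gives the $1$-parameter family.

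\emph{Case $E=\E_2\otimes L$.} Tensoring by $L^{-1}$ reduces to $E=\E_2$, the nonsplit extension $0\to\cO_X\to\E_2\to\cO_X\to 0$. Let $F\subset\E_2$ have degree $0$. If the composite $F\to\E_2\to\cO_X$ is nonzero, it is an isomorphism and the extension splits, which is a contradiction. Hence $F$ maps into the sub $\cO_X$, and the map $F\to\cO_X$ is nonzero between degree $0$ line bundles, so it is an isomorphism and $F=\cO_X$. Therefore $F=L$ as a subbundle of $\E_2\otimes L$. This agrees with the earlier statement that $\cO_X\subset\E_2$ is the unique degree $0$ subbundle.

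There is no real obstacle. The only points needing care are the splitting argument in the last case and checking in the second case that distinct points of $\PP^1$ give distinct subbundles. The latter is also immediate from Corollary \ref{cor:injectiverestriction}, since the subbundles have distinct fibers at any point.
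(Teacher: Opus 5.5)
Your proof is correct. For the two split cases you follow essentially the paper's argument. You project $F$ onto the factors and use that a nonzero map between degree $0$ line bundles is an isomorphism, which is the content of Lemma \ref{lem-sameslopeandrank}. The only difference there is minor. For $L\oplus L$ you parametrize all degree $0$ line subbundles directly by $\PP(\Hom(L,L\oplus L))\cong\PP^1$. The paper instead matches them with lines in a fiber $E\vert_x$ and invokes Corollary \ref{cor:injectiverestriction} to rule out others.

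The genuine divergence is in the case $\E_2\otimes L$. The paper writes $0\to F\to\E_2\to F'\to 0$ and uses non-vanishing of $\Ext^1(F',F)$ with Riemann--Roch to get $F\cong F'$. From uniqueness of the nontrivial extension it deduces $\E_2\cong\E_2\otimes F$, and then appeals to Atiyah's Theorem 5(ii) to conclude $F\cong\cO_X$. You instead compose $F\hookrightarrow\E_2\to\cO_X$ and observe that a nonzero composite would be an isomorphism and would split the extension. Hence $F$ lies in the kernel $\cO_X$ and equals it.

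Your route is more elementary: it needs neither Riemann--Roch nor Atiyah's uniqueness result. It also gives the equality $F=\cO_X$ of subbundles, which is what the statement actually asserts. The paper's argument only identifies the isomorphism class of $F$. One would still need an extra remark, for instance $h^0(\E_2)=1$ (or precisely your splitting argument), to conclude that the subbundle itself is unique.
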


\begin{proof}
    For $E = L_1 \oplus L_2$, let $F \subseteq E$ be a line subbundle of degree $0$. Consider $\varphi_i$ the composition of the inclusion of $F$ in $E$ with the projection $E \rightarrow L_i$. If $\varphi_i = 0$, then $F \subseteq L_j$, $j\neq i$ and since they have the same rank and slope, $F = L_j$. If $\varphi_i \neq 0$, 
    If $\varphi_i \neq 0$, then applying Lemma \ref{lem-sameslopeandrank}, we deduce that $F\cong L_i$. In the case $L_1\not=L_2$,
    we deduce that $F=L_1$ or $L_2$, otherwise we would have $\varphi_1,\varphi_2\not=0$ and conclude that $F\cong L_1,L_2$, a contradiction.
    When $L_1=L_2=L$, then we have a one-parameter family of line subbundles $E\supset F\cong L$ that are in one-to-one correspondence with lines in $E\vert_x$
    (or points in $\PP(E\vert_x)\cong\PP^1$); by Corollary \ref{cor:injectiverestriction}, there are no other line subbundle of degree $0$.

Next, let $F \subseteq \E_2 \otimes L$ be a line subbundles of degree $0$. Observe that the task is equivalent to describe the line subbundles of degree $0$ of $\E_2$. Let $F \subseteq \E_2$ be a line subbundle of degree $0$. Then,
\[ 0 \to F \to \E_2 \to F' \to 0\]
which implies that $\mathrm{Ext}^1(F', F) = H^1(X, F \check{F}')$ is nontrivial. Riemann-Roch yields $F = F'$ and $\E_2$ is the unique nontrivial extension in $\mathrm{Ext}^1(F, F)$. But we also know that $\E_2$ is the (unique) nontrivial extension in $\mathrm{Ext}^1(\cO_X, \cO_X)$. Then, we must also have
\[ 0 \rightarrow F \rightarrow \E_2 \otimes F \rightarrow F\rightarrow 0.\]
Therefore, $\E_2 \cong \E_2 \otimes F$. Since $\E_2$ is indecomposable of degree $0$, by \cite[Theorem $5,(ii)$]{atiyah57}, $F \cong \cO_X$. 
\end{proof}

\begin{prop}\label{prop:SubbundlesOfDegree0}
Let $E$ be a semistable vector bundle in $\mathrm{Bun}_{\mc{O}_X}(3,X)$. We can describe strict subbundles $F\subset E$ of degree $0$,  depending on the type of $E$ as follows:
\begin{itemize}
    \item[(1)] if $E=L_1 \oplus L_2 \oplus L_3$, with $L_i \not\cong L_j$, then $F=L_i$ or $F=L_i\oplus L_j$, $i,j=1,2,3$;
    \item[(2.1)] if $E=L^{-2} \oplus (\mc{E}_{2}\otimes L)$, with $L^{3} \not\cong \cO_X$, then $F=L^{-2},$ $L$, $\mc{E}_{2}\otimes L$ or $L^{-2}\oplus L$;
    \item[(2.2)] if $E=L^{-2}\oplus L\oplus L$, 
    with $L^{3} \not\cong \cO_X$, then $F=L^{-2}$, $L \oplus L$,
    or $F\cong\underbrace{L}_{\infty^1}$ or $\underbrace{L^{-2}\oplus L}_{\infty^1}$; 
    \item[(3.1)] if $E=\mc{E}_3 \otimes L$, then $F=L$ or $\mc{E}_2\otimes L$; 
     \item[(3.2)] if $E=L\oplus (\mc{E}_{2}\otimes L)$, then $F=L \oplus L$, or $F\cong\underbrace{L}_{\infty^1}$ or  $\underbrace{\mc{E}_{2}\otimes L}_{\infty^1}$;
    \item[(3.3)] if $E=L\oplus  L\oplus  L$, then $F\cong\underbrace{L}_{\infty^2}$ or $\underbrace{L\oplus L}_{\infty^2}$. ($2$-parameter families)
\end{itemize}
We plot $\infty^1$ or $\infty^2$ when the subbundle is not unique, but can be deformed into a $1$- or $2$-parameter family.
\end{prop}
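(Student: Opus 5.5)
Throughout, $E$ is semistable of degree $0$, so any degree $0$ subbundle $F\subset E$ is itself semistable of degree $0$, and so is $E/F$. The plan is therefore to determine first the rank $1$ subbundles $F$ of degree $0$, and then to obtain the rank $2$ ones by duality. For a rank $2$ subbundle $F$, the annihilator $(E/F)^*\subset E^*$ is a degree $0$ line subbundle of $E^*$. Moreover, $E^*$ is of the same type as $E$: one replaces $L_i$ by $L_i^{-1}$ and $L$ by $L^{-1}$, using $\mc{E}_k^*\cong\mc{E}_k$. Hence the rank $2$ list is obtained from the rank $1$ list by taking annihilators.

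For a line subbundle $F$ of degree $0$, I will compose the inclusion with the projection onto each indecomposable summand of $E$.
\begin{itemize}
\item If the summand is a line bundle $L'$ and the composition $F\to L'$ is nonzero, then by Lemma \ref{lem-sameslopeandrank} it is an isomorphism, so $F\cong L'$.
\item If the summand is $\mc{E}_2\otimes L$ and the composition is nonzero, then the image is a degree $0$ line subbundle (its saturation has degree $\ge 0$, and semistability forces equality). By Lemma \ref{lem-degree0inrank2} it equals $L$, so again $F\cong L$.
\item If the summand is $\mc{E}_3\otimes L$, I will show that its only degree $0$ line subbundle is $L$, and its only degree $0$ rank $2$ subbundle is $\mc{E}_2\otimes L$. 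This uses the same extension argument as in Lemma \ref{lem-degree0inrank2}: a degree $0$ line subbundle $F$ gives a nonzero map $F\to \mc{E}_3\otimes L$, hence $H^0(F^{-1}\otimes L\otimes\mc{E}_3)\neq0$. Since $\mc{E}_3\otimes M$ with $M\neq\cO_X$ of degree $0$ has no sections (by its filtration by copies of $M$), this forces $F\cong L$. Then $h^0(\mc{E}_3)=1$ shows the subbundle is unique.
\end{itemize}

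Case by case, this gives:
\begin{itemize}
\item Type (1): the $L_i$ are pairwise non-isomorphic, so at most one projection is nonzero, and hence $F=L_i$.
\item Type (2.1): $F\cong L^{-2}$ or $F\cong L$. Because $L^{-2}\not\cong L$, i.e. $L^3\neq\cO_X$, there is no nonzero map $F\to$ (other summand), so $F=L^{-2}$ or $F=L\subset\mc{E}_2\otimes L$.
\item Type (2.2): $F=L^{-2}$, or $F\subset L\oplus L$. In the second case $F\cong L$, and these form a $\PP^1$-family by Lemma \ref{lem-degree0inrank2}.
\item Type (3.1): $F=L$ by the preceding bullet.
\item Type (3.2): $F\cong L$, with the maps into $L$ and into $\mc{E}_2\otimes L$ landing in $L\oplus L\subset E$. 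This gives the family $\PP(L\oplus L)\cong\PP^1$, so $\infty^1$.
\item Type (3.3): $F\cong L$, parametrized by $\PP(E|_x)\cong\PP^2$.
\end{itemize}
In each case Corollary \ref{cor:injectiverestriction} guarantees there are no further subbundles, since $F$ is determined by $F|_x$. It also guarantees that the listed families are parametrized exactly by lines in fibers.

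Dualizing then gives the rank $2$ subbundles:
\begin{itemize}
\item Type (1): $L_i\oplus L_j$.
\item Type (2.1): $L^{-2}\oplus L$ and $\mc{E}_2\otimes L$.
\item Type (2.2): $L\oplus L$, together with a $\PP^1$-family of subbundles $L^{-2}\oplus L'$, where $L'\subset L\oplus L$.
\item Type (3.1): $\mc{E}_2\otimes L$.
\item Type (3.2): $L\oplus L$, together with a $\PP^1$-family. These are the preimages of line subbundles of the quotient by $L\subset\mc{E}_2\otimes L$, and they are isomorphic to $\mc{E}_2\otimes L$ except for one member, which is $L\oplus L$.
\item Type (3.3): planes in $E|_x$, giving $\infty^2$.
\end{itemize}
One must check that these rank $2$ subbundles have the stated isomorphism types, which follows from the exact sequences involved. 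Alternatively, one can argue directly via Lemma \ref{lem:constantrank} using $F\cap G$ for a rank $1$ subbundle $G$.

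The main obstacle is the indecomposable pieces, namely confirming the uniqueness statements for $\mc{E}_3\otimes L$, and identifying the isomorphism type of members of the (3.2) rank $2$ family. For the latter I would identify the family with lines in the rank $2$ bundle $(L\oplus(\mc{E}_2\otimes L))/L_0$, where $L_0\subset\mc{E}_2\otimes L$ is the unique copy of $L$ there. This quotient is $L\oplus L$, and I would read off the extension class of each preimage: it is trivial for exactly one line, which gives $L\oplus L$, and nontrivial otherwise, which gives $\mc{E}_2\otimes L$.
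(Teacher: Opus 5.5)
Your proof is correct, but it takes a different route from the paper's.

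\textbf{The paper's route.} The paper argues fiberwise. For the split types, suppose a degree-$0$ subbundle $F$ is not among the factors. Lemma \ref{lem:constantrank} then turns a fiberwise direct sum into global splittings such as $E=L_1\oplus L_2\oplus F=L_1\oplus F\oplus L_3$, and comparing determinants forces coincidences among the $L_i$. Rank $2$ subbundles are handled directly, by intersecting $F$ with the factors. For type (3.2), the existence of the pencil of copies of $\mc{E}_2\otimes L$ is borrowed from the endomorphism computation in Proposition \ref{prop:endomorphisms}. The absence of further subbundles comes from the indecomposability of $\mc{E}_2\otimes L$. The remaining types are left to the reader.

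\textbf{Your route.} You use a Schur-type argument, projecting onto indecomposable summands. It rests on three facts: a nonzero map between degree-$0$ line bundles is an isomorphism; $H^0(\mc{E}_k\otimes M)=0$ for $M\neq\cO_X$ of degree $0$; and $h^0(\mc{E}_k)=1$. This treats all six types uniformly in rank $1$, including (2.1) and (3.1), which the paper does not write out. The annihilator correspondence $F\mapsto (E/F)^*\subset E^*$ then reduces rank $2$ to rank $1$, since $E^*$ has the same type as $E$. For the (3.2) pencil, you pull back the class $(0,e)\in\Ext^1(L\oplus L,L_0)$ of $0\to L_0\to E\to E/L_0\to 0$ along each line of $E/L_0\cong L\oplus L$. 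This class is trivial on exactly one line, which gives $L\oplus L_0$ once and $\mc{E}_2\otimes L$ otherwise. That argument is self-contained and does not depend on the later automorphism computation.

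\textbf{Comparison.} Your argument is essentially the abelian-category argument sketched in the Remark after the proof, extended to the indecomposable summands. The paper's version has the advantage of being phrased directly in terms of the configurations of points and lines in $\PP(E_p)$ used later for stability.

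\textbf{Minor points.} In (3.2) you should say preimages of \emph{degree $0$} line subbundles of $E/L_0$. Also, Corollary \ref{cor:injectiverestriction} is not needed for exhaustiveness, since your Hom argument already gives it.
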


In Figure \ref{fig:degree0bundle}, we consider for each type, the restriction to $\PP(E_p)$ of degree $0$ line subbundles (red points) and rank $2$ subbundles (blue lines)
\begin{figure}[H]
    \centering
    \includegraphics[width=0.8\linewidth]{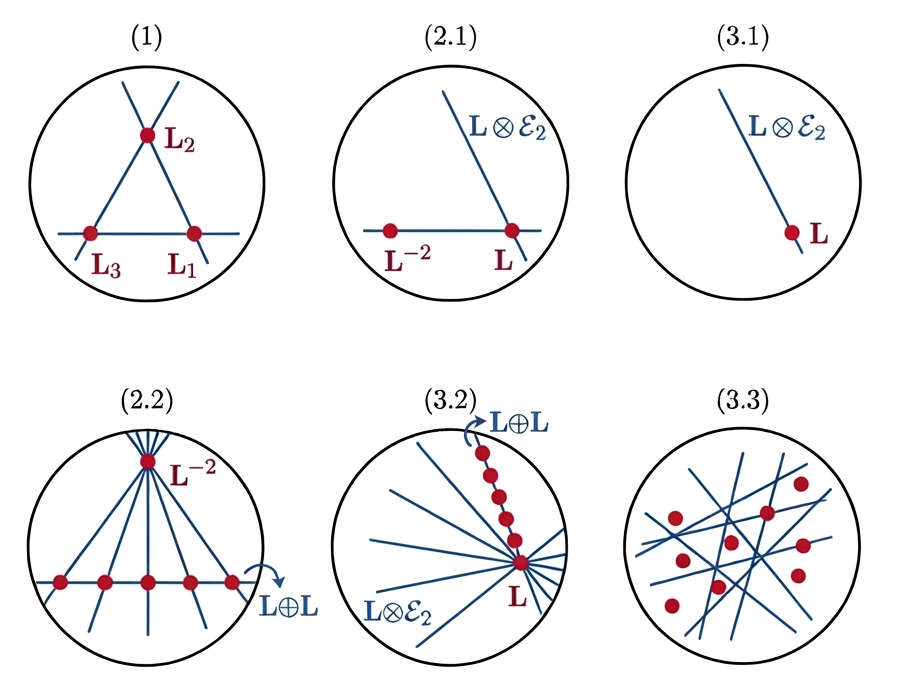}
    \caption{Degree $0$ subbundles}
    \label{fig:degree0bundle}
\end{figure}

\begin{proof}[Proof of Proposition \ref{prop:SubbundlesOfDegree0}]
It is straightforward to check that, in each case, $E$ contains at least the given subbundles of degree $0$. The only non trivial case is to check that there is indeed a $1$-parameter family of degree $0$ rank $2$  subbundles in case (3.2), i.e. that 
$\mc{E}_{2}\otimes L$ can be deformed. But it directly follows from the computation of automorphisms in Proposition \ref{prop:endomorphisms}: by setting $a=1$, $b=c=0$ and $d$ arbitrary, we get a $1$-parameter family of automorphisms deforming 
$\mc{E}_{2}\otimes L$ (and fixing the line subbundle $L\subset \mc{E}_{2}\otimes L$).

Now, let us see how to prove that there are no other degree zero subbundles. 

Let $E=L_1 \oplus L_2 \oplus L_3$, $\deg(L_i)=0$, and let $F\subset E$ be a line subbundle of degree $0$, $F\not=L_i$, $i=1,2,3$. 
If $F\subset L_1\oplus L_2$, then $L_1\cong L_2$ by applying Lemma \ref{lem-degree0inrank2} and we have a one-parameter family of such $F$: we are in case (2.2). If $F\not\subset L_i\oplus L_j$ for all $i,j=1,2,3$, then we can conclude
$$E\vert_x=L_1\vert_x\oplus L_2\vert_x\oplus F\vert_x=L_1\vert_x\oplus F\vert_x\oplus L_3\vert_x=F\vert_x\oplus L_2\vert_x\oplus L_3\vert_x$$
at a general point $x\in X$, and therefore
$$E=L_1\oplus L_2\oplus F=L_1\oplus F\oplus L_3=F\oplus L_2\oplus L_3$$
by Lemma \ref{lem:constantrank}; we deduce 
$$L_1\otimes L_2\otimes L_3=\det(E)=L_1\otimes L_2\otimes F=L_1\otimes F\otimes L_3=F\otimes L_2\otimes L_3$$
implying $F\cong L_i$ for all $i$, and we are in case $(3.3)$: 
degree $0$ line subbundles $F$ are in one-to-one correspondence 
with points in $\PP(E\vert_x)\cong\PP^2$.
Finally, in case (1), we conclude by contradiction that $F$ must coincide with one of the $3$ factors $L_i$.

Let $F\subset E=L_1 \oplus L_2 \oplus L_3$ be a rank $2$ subbundle of degree $0$, $F\not=L_i\oplus L_j$ for all $i,j=1,2,3$. If $F$ contains one factor, 
say $L_1\subset F$, then $F_1:=F\cap (L_2\oplus L_3)$ defines a line subbundle 
of degree $0$ by Lemma \ref{lem:constantrank}, which must be different from
$L_2,L_3$ by assumption, and we can conclude $L_2\cong L_3$ like in case (2.2).
On the other hand, if $F$ contains no factor $L_i$, $i=1,2,3$, then we similarly
conclude that 
$$E=L_1\oplus F=L_2\oplus F=L_3\oplus F$$
which implies $L_1\cong L_2\cong L_3$ by taking the determinant.

We check another example, and leave the other cases for the reader.
Let $E=L\oplus (\mc{E}_{2}\otimes L)$ be of type (3.2).
It contains a subbundle $L_0\subset \mc{E}_{2}\otimes L$
which is isomorphic to $L$, and the $1$-parameter family given by linear combinations of $L$ and $L_0$. It also contains a $1$-parameter family of rank $2$ subbundles $\cong \mc{E}_{2}\otimes L$
all containing $L_0$, that form a pencil together with $L\otimes L_0$. Now suppose $F\subset E$ is of degree $0$, distinct from previous bundles. If $F$ has rank $2$, then $F\cap(\mc{E}_{2}\otimes L)$ defines a new line subbundle $L_0'$ of degree $0$ making $\mc{E}_{2}\otimes L$ decomposable, $\cong L_0\oplus L_0'$, a contradiction. If $F$ has rank $1$, again it must be contained in a copy of $\mc{E}_{2}\otimes L$, contradiction.
\end{proof}

\begin{rem} The proof given by the description of the degree zero subbundles of $L_1 \oplus L_2 \oplus L_3$, with $\deg L_i = 0$, generalizes to any rank. i.e., given $L_i \in \Jac X, \; i=1, \ldots, n$, then any subbundle of $L_1 \oplus \cdots \oplus L_n$ is a direct sum of some $L_i's$. Moreover, another way to prove it is observing that: (i) given $\mu \in \Q$, the category $\mathrm{C}_\mu$ of semistable bundles of slope $\mu$ is an abelian category, cf. \cite[Prop. 5.3.6]{LePotier97}; (ii) in an abelian category, any subobject of a direct sum $E_1 \oplus \cdots \oplus E_n$ of simple objects $E_i$ is given by $\oplus_{i \in I} E_i$ where $I \subseteq \{1, \ldots, n\}$; (iii) follows from Atiyah \cite{atiyah57} that the set of stable bundles of slope $\mu \in \Q$ is the set of simple objects of $\mathrm{C}_\mu$; and (iv) every line bundle is a stable bundle (indeed the unique bundles that are stable in our setting).    
\end{rem}

\subsection{Automorphisms of vector bundles}

Let $\End(E)$ denotes the vector space of global endomorphisms of $E$,
and $\Aut(E)$, the group of global automorphisms.
Given a point $p\in C$, consider the morphism of Lie algebras:
$$R_p\ :\ \End(E)\to \mathrm{gl}(E\vert_p)\ ;\ \Phi\mapsto \Phi\vert_p$$
given by restriction to a fiber. For instance, when $E$ is a line bundle, 
then this morphism is bijective, and $\End(E)\cong\mathbb{C}$.

\begin{prop}\label{prop:endomorphisms}
Let $E$ be a semistable vector bundle in $\mathrm{Bun}_{\mc{O}_X}(3,X)$.
Then the morphism $R_p$ is injective and its image $G=R_p(\End(E))\subset\mathrm{gl}(E\vert_p)$ can be described as follows:
\begin{itemize}
    \item[(1)] if $E=L_1\oplus ,L_2\oplus ,L_3$, then $G=\left\{\begin{pmatrix}a_1&0&0\\0&a_2&0\\0&0&a_3\end{pmatrix},\ (a_1,a_2,a_3)\in\mathbb{C}^3\right\}$;
    \item[(2.1)] if $E=L^{-2} \oplus (\mc{E}_{2}\otimes L)$, then $G=\left\{\begin{pmatrix}a_1&0&0\\0&a_2&b\\0&0&a_2\end{pmatrix},\ (a_1,a_2,b)\in\mathbb{C}^3\right\}$;
    \item[(2.2)] if $E=L^{-2}\oplus L\oplus L$, then $G=\left\{\begin{pmatrix}a_1&0&0\\0&a_2&b\\0&c&a_3\end{pmatrix},\ (a_1,a_2,a_3,b,c)\in\mathbb{C}^5\right\}$; 
    \item[(3.1)] if $E=\mc{E}_3 \otimes L$, then $G=\left\{\begin{pmatrix}a&b&c\\0&a&b\\0&0&a\end{pmatrix},\ (a,b,c)\in\mathbb{C}^3\right\}$;  
    \item[(3.2)] if $E=L\oplus(\mc{E}_2 \otimes L)$, then $G=\left\{\begin{pmatrix}a&0&d\\c&a&b\\0&0&a\end{pmatrix},\ (a,b,c,d)\in\mathbb{C}^4\right\}$;
    \item[(3.3)] if $E=L\otimes L\otimes L$, then $G=\mathrm{gl}(\mathbb{C}^3)$. 
\end{itemize}
In cases (1), (2.1), (3.1), the $G$-invariant subspaces correspond to the restriction
at $p$-fiber of degree $0$ subbundles.
\end{prop}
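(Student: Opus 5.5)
Injectivity of $R_p$ comes first. Since every summand in Atiyah's list is built from degree $0$ line bundles and Atiyah's bundles $\mc{E}_2,\mc{E}_3$, any $\Phi\in\End(E)$ is compatible with the filtration by degree $0$ subbundles. I would argue uniformly. If $\Phi\vert_p=0$, consider the image sheaf of $\Phi$, or equivalently the morphism $\Phi:E\to E$ between semistable bundles of degree $0$. In the abelian category of semistable bundles of slope $0$ (see the Remark after Proposition \ref{prop:SubbundlesOfDegree0}), its kernel and image are subbundles of degree $0$. By Corollary \ref{cor:injectiverestriction}, the image $F=\mathrm{Im}(\Phi)$ is determined by $F\vert_p$, which is $0$ because $\Phi\vert_p=0$. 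Since a subbundle has constant rank, $F=0$ and $\Phi=0$. Hence $R_p$ is injective, and $\dim\End(E)=\dim G$.

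Next I compute $\End(E)$ block by block, using $\Hom(L,L')=H^0(L'\otimes L^{-1})$, which is $\C$ if $L\cong L'$ and $0$ otherwise for degree $0$ line bundles. I also use Atiyah's computations $\Hom(\mc{E}_2,\mc{E}_2)\cong\C[\epsilon]/\epsilon^2$, $\End(\mc{E}_3)\cong\C[\epsilon]/\epsilon^3$, and $\Hom(L,\mc{E}_k\otimes L')=\Hom(\mc{E}_k\otimes L',L)=0$ unless $L\cong L'$, in which case these spaces are $1$-dimensional (since $h^0(\mc{E}_k)=1$). For case (1) only diagonal terms survive, giving the diagonal algebra. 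For (2.1), the block $L^{-2}$ decouples from $\mc{E}_2\otimes L$ because $L^{3}\neq\cO_X$, and $\End(\mc{E}_2\otimes L)$ restricted to $p$ gives $\begin{pmatrix}a&b\\0&a\end{pmatrix}$ in a basis adapted to $L\subset\mc{E}_2\otimes L$. In (2.2), the block $\mathrm{gl}_2$ acts on $L\oplus L$. In (3.1), the truncated polynomial algebra restricts to upper triangular Toeplitz matrices, in a basis adapted to the flag $L\subset\mc{E}_2\otimes L\subset\mc{E}_3\otimes L$. Case (3.3) is all of $\mathrm{gl}_3$.

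For (3.2), in the basis $(e_1,e_2,e_3)$ with $e_1$ spanning the factor $L$, $e_2$ spanning the sub $L_0\subset\mc{E}_2\otimes L$, and $e_3$ lifting the quotient, the maps that survive are the following:
\begin{itemize}
\item $L\to L$ and the scalar part of $\End(\mc{E}_2\otimes L)$, giving the coefficient $a$;
\item the nilpotent part of $\End(\mc{E}_2\otimes L)$, giving $b$;
\item $L\to\mc{E}_2\otimes L$, landing in $L_0$, giving $c$;
\item $\mc{E}_2\otimes L\to L$, factoring through the quotient, giving $d$.
\end{itemize}
Together these give the stated $4$-dimensional algebra. The one subtle point is that the diagonal entries of the $(e_1,e_1)$ and $\mc{E}_2$ blocks must be equal. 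This is forced because $\End(E)$ must be closed under composition: the products $cd$ and $dc$ must land in the algebra. I expect this to be the main bookkeeping obstacle. On reflection, though, the independent scalars on $L$ and on $\mc{E}_2\otimes L$ (the projections onto the two summands) are genuine endomorphisms. So I would recheck the stated form against a direct computation, and I expect the diagonal to read $(a,a',a')$ with total dimension $5$. In any case, the method is simply to write the maps in adapted bases and restrict them to $p$.

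Finally, for the invariant-subspace claim in cases (1), (2.1) and (3.1), I would compute the $G$-invariant subspaces of $E\vert_p$ directly. In case (1), the diagonal algebra with distinct entries has only the coordinate subspaces as invariant subspaces. In case (2.1), the invariant subspaces are the spans of $e_1$, $e_2$, $\{e_2,e_3\}$ and $\{e_1,e_2\}$. In case (3.1), the Toeplitz algebra contains a regular nilpotent element, so its invariant subspaces form a single flag. Comparing these lists with Proposition \ref{prop:SubbundlesOfDegree0}, they are exactly the fibers at $p$ of the degree $0$ subbundles listed there.
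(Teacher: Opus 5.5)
Your argument is correct. In case (3.2) it establishes the right answer rather than the printed one, because the statement's $G$ for (3.2) is wrong. The projection $L\oplus(\mc{E}_2\otimes L)\to L\hookrightarrow E$ is an endomorphism whose restriction at $p$ is $\mathrm{diag}(1,0,0)$, which is not of the printed shape. Your block count also gives $\dim\End(E)=1+1+1+2=5$. So the correct algebra is
\[
G=\left\{\begin{pmatrix}a&0&d\\c&a'&b\\0&0&a'\end{pmatrix},\ (a,a',b,c,d)\in\C^5\right\}.
\]
The paper's own \v{C}ech computation for (3.2) confirms this. It only forces $b=g=h=0$ and $k=e$, leaving $a,c,d,e,f$ free, even though it then calls the result ``the form of the statement''. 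So state the corrected (3.2) as a claim, not an expectation. Also delete the sentence saying that closure under composition forces equal diagonal entries. That claim is false: the product of two matrices of the above shape, with diagonals $(a,a',a')$ and $(x,x',x')$, is again of that shape with diagonal $(ax,a'x',a'x')$. In your terms, the composite $\mc{E}_2\otimes L\to L\to\mc{E}_2\otimes L$ is a $b$-type term, and the composite in the other order vanishes.

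Your route also differs from the paper's.
\begin{itemize}
\item For (3.1) and (3.2), the paper puts the cocycles of $\mc{E}_3$ and $\cO_X\oplus\mc{E}_2$ in unipotent normal form, with entries multiples of a generator $\sigma_{ij}$ of $H^1(X,\cO_X)$, and solves $\Phi_iM_{ij}=M_{ij}\Phi_j$. Endomorphisms come out as constant matrices, so injectivity of $R_p$ and the shape of $G$ are read off directly. Atiyah's description of $\End(\mc{E}_3)$ is reproved rather than quoted.
\item For (1), (2.1) and (2.2), the paper splits $\End(E)$ along the summands. It gets the vanishing of the off-diagonal blocks from the principle that rigid degree $0$ subbundles are preserved by $\End(E)$, and it quotes Maruyama for $\End(\mc{E}_2)$.
\item The paper does not argue injectivity or the final invariant-subspace claim separately.
\end{itemize}
You instead prove injectivity uniformly: the image of $\Phi$ is a degree $0$ subbundle whose fibre at $p$ is $\Phi\vert_p(E\vert_p)$. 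Even more directly, $\Phi\vert_p=0$ means $\Phi\in\Hom(E,E(-p))=0$ by semistability. You then read $\End(E)$ off the $\Hom$-spaces between Atiyah's indecomposable summands. This gives the vanishing of cross terms by a direct computation. It also gives the dimensions $3,3,5,3,5,9$ at a glance, which is exactly what exposes the (3.2) misprint. Through injectivity, it further guarantees that the nilpotent generators restrict at $p$ to nonzero nilpotents, regular ones in case (3.1). The price is that you rely on Atiyah's $\End(\mc{E}_k)\cong\C[\epsilon]/\epsilon^k$, $\mc{E}_k^*\cong\mc{E}_k$ and $h^0(\mc{E}_k)=1$ instead of deriving them. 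Your check of the invariant subspaces in cases (1), (2.1) and (3.1) is correct and fills a step the paper leaves implicit.
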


\begin{proof}
The first observation is that, if a subbundle $F\subset E$ of degree $0$ cannot be deformed,
then it must be preserved by $\End(E)$, since $\End(E)$ is the Lie algebra of 
infinitesimal automorphisms. 

In case (1), each $L_i$ must be preserved
and through a convenient basis, $G$ is the diagonal Lie algebra: 
$$\End(E)=\End(L_1)\oplus\End(L_2)\oplus\End(L_3).$$
In case (2.1), we have
$$\End(E)=\End(L^{-2})\oplus\End(\mc{E}_{2}\otimes L)$$
and $\End(\mc{E}_{2}\otimes L)=\End(\mc{E}_{2})$ (twist by line bundle does not affect endomorphisms). From \cite{Maruyama}[page 92 (4")(a)],
we get $$\End(\mc{E}_{2})\cong\left\{\begin{pmatrix}a&b\\0&a\end{pmatrix},\ (a,b)\in\mathbb{C}^2\right\}.$$
In case (2.2), we have
$$\End(E)=\End(L^{-2})\oplus\End(L\oplus L)$$
and we obviously have $\End(L\oplus L)=\End(\cO_X\oplus\cO_X)\cong\mathrm{gl}(\mathbb{C}^2)$.

We provide a proof for case (3.1), inspired by \cite{Maruyama} for the rank $2$ case. We first observe that any endomorphism $\Phi$ must preserve the flag $\cO_X\subset \mc{E}_2\subset\mc{E}_3$. Next, observe that eigenvalues of $\Phi$
vary holomorphically on $X$, and therefore constant. Moreover, they are equal, otherwise
the corresponding decomposition of eigenspaces will produce a decomposition of 
$\mc{E}_3$, impossible. Now, $\mc{E}_3$ can be defined by a \v{C}ech cocycle
in $(M_{ij})_{ij}\in H^1(X,\mathrm{GL}(\cO_X))$ of the form:
$$M_{ij}=\begin{pmatrix}
    1&a_{ij}&b_{ij}\\ 0&1&c_{ij}\\0&0&1
\end{pmatrix}$$
where $a_{ij},b_{ij},c_{ij}$ define cocycles in $H^1(X,\cO_X)$ arising from 
successive extensions $\cO_X\to \mc{E}_2\to\mc{E}_3$.
Along the diagonal, we have put $1$ for the cocycle of the trivial bundle.
Since $H^1(X,\cO_X)\cong\mathbb{C}$, it is generated by a single element $(\sigma_{ij})$ and, maybe changing to an equivalent cocycle in $ H^1(X,\mathrm{GL}(\cO_X))$, we may assume
$$ M_{ij}=\begin{pmatrix}
    1&a\sigma_{ij}&b\sigma_{ij}\\ 0&1&c\sigma_{ij}\\0&0&1
\end{pmatrix}$$
for constant $a,b,c\in\mathrm{C}$. By obvious change, we can even assume in Jordan
normal form: 
$$ M_{ij}=\begin{pmatrix}
    1&\sigma_{ij}&0\\ 0&1&\sigma_{ij}\\0&0&1
\end{pmatrix}.$$
On the other hand, an endomorphism $\Phi$ of $\mc{E}_3$
is locally given by 
$$ \Phi_{i}=\begin{pmatrix}
    \alpha&\beta_i&\delta_i\\ 0&\alpha&\gamma_i\\0&0&\alpha
\end{pmatrix}$$
with $\alpha\in\mathbb{C}$ and $\beta_i,\gamma_i,\delta_i$ holomorphic.
Cocycle relation $\Phi_i M_{ij}=M_{ij} \Phi_j$ gives
$$\left\{\begin{matrix}
    \beta_i-\beta_j=0,\hfill\\
    \gamma_i-\gamma_j=0,\hfill\\
    \delta_i-\delta_j=(\gamma_j-\beta_i)\sigma_{ij}.
\end{matrix}\right.$$
We first deduce that $(\beta_i)_i$ and $(\gamma_i)_i$ define global holomorphic sections, and therefore constant scalars $\beta,\gamma\mathrm{C}$.
Then, the third identity rewrites $\delta_i-\delta_j=(\gamma-\beta)\sigma_{ij}$;
if $\gamma\not=\beta$, then $\sigma_{ij}$ is a coboundary of $\left(\frac{\delta_i}{\gamma-\beta}\right)_i$, and is therefore trivial in $H^1(X,\cO_X)$, contradiction.
Therefore, $\gamma=\beta$, and $\delta_i=\delta_j$, defining a global holomorphic function, and therefore constant. We get the normal form of the statement for $\Phi$.

In case (3.2), we proceed as before. After twisting by $L^{-1}$, which does not 
change the endomorphism group, we can assume $L=\cO_X$ with trivial cocycle,
and $E$'s cocycle given by 
$$M_{ij}=\begin{pmatrix}
    1&0&0\\ 0&1&\sigma_{ij}\\0&0&1
\end{pmatrix}.$$
Then, assuming $\Phi$ locally given by 
$$\Phi_i=\begin{pmatrix}
  a_i & b_i & c_i\\ 
  d_i & e_i & f_i\\
  g_i & h_i & k_i
\end{pmatrix},$$
the compatibility condition $\Phi_i M_{ij}-M_{ij} \Phi_j=0$ gives
$$a_i-a_j=b_i-b_j=g_i-g_j=h_i-h_j=0$$
defining global holomorphic and therefore constant functions $a,b,g,h\in\mathbb{C}$,
and
$$\left\{\begin{matrix}
    c_i-c_j=-b\sigma_{ij}\hfill\\
    d_i-d_j=g\sigma_{ij}\hfill\\
    e_i-e_j=h\sigma_{ij}\hfill\\
    k_i-k_j=-h\sigma_{ij}\hfill\\
    f_i-f_j=(k_j-e)\sigma_{ij}
\end{matrix}\right.$$
Since $\sigma_{ij}$ is a non trivial cocycle, we have $b=g=h=0$
and 
$$c_i-c_j=d_i-d_j=e_i-e_j=k_i-k_j=0$$
defining constants $c,d,e,k\in\mathbb{C}$; finally, the last equality yields 
$k=e$ and $f_i=f\in\mathbb{C}$ constant, which is the form of the statement.
Case (3.3) is obvious.
\end{proof}

\begin{cor}
    Let $E$ and $G$ be as in Proposition \ref{prop:endomorphisms}. Then 
    $\Aut(E)=G\cap\mathrm{GL}(E\vert_p)$.
\end{cor}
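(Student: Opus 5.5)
The plan is to read the corollary straight off Proposition \ref{prop:endomorphisms}. The key input is that $R_p:\End(E)\to\mathrm{gl}(E\vert_p)$ is an injective morphism of algebras with image $G$. It respects composition and sends the identity to the identity. So $R_p$ is an isomorphism of unital algebras $\End(E)\cong G$, and $\Aut(E)$ is the group of units of $\End(E)$. Under this isomorphism $\Aut(E)$ corresponds to the units of the algebra $G$. It therefore suffices to prove two things: that a global endomorphism $\Phi$ is invertible exactly when $\Phi\vert_p$ is invertible, and that the inverse of an element of $G\cap\mathrm{GL}(E\vert_p)$ stays in $G$.

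\textbf{Invertibility is detected at $p$.} Let $\Phi\in\End(E)$ with $\Phi\vert_p\in\mathrm{GL}(E\vert_p)$. Then $\Phi:E\to E$ is a morphism between bundles of the same rank and degree which is bijective on the fiber at $p$. Lemma \ref{lem-sameslopeandrank} then shows that $\Phi$ is a global isomorphism, so its inverse $\Phi^{-1}$ is a global endomorphism. Hence $\Phi\in\Aut(E)$, and $R_p(\Phi^{-1})=(\Phi\vert_p)^{-1}$ lies in $G$. Conversely, if $\Phi\in\Aut(E)$, then $\Phi\vert_p$ is invertible with inverse $\Phi^{-1}\vert_p$, which also lies in $G$. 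Together these give
\[
R_p(\Aut(E))=G\cap\mathrm{GL}(E\vert_p).
\]
Identifying $\Aut(E)$ with its image under the injective map $R_p$ gives the statement.

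\textbf{Remaining care.} I expect no real obstacle; the only point needing attention is the second requirement, that the inverse of a matrix in $G$ stays in $G$. This is supplied by Lemma \ref{lem-sameslopeandrank} and needs no case-by-case check. One can still confirm it directly on the listed normal forms: each $G$ is closed under inversion, as with the invertible upper-triangular Toeplitz matrices in case (3.1). This is consistent with, but not needed for, the argument.
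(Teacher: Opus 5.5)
Your proof is correct and is the argument the paper leaves implicit: it states the corollary without proof, as an immediate consequence of the injectivity of $R_p$ in Proposition \ref{prop:endomorphisms} together with Lemma \ref{lem-sameslopeandrank}, which says an endomorphism of $E$ that is invertible on one fibre is a global automorphism. Your argument uses only $G=R_p(\End(E))$ and never the explicit normal forms, which is the right way to do it. This matters in case (3.2): the listed $G$ forces all diagonal entries to be equal, whereas the projection onto the summand $L$ is an endomorphism restricting to $\mathrm{diag}(1,0,0)$ at $p$.
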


\subsection{Connections and flat vector bundles}

\begin{defi}
Let $E$ be a vector bundle on $X$.
\begin{enumerate}
    \item A \emph{connection} on $E$ is a $\mathbb{C}$-linear map $\nabla: E \to E\otimes \Omega^1_{X}$ satisfying the Leibniz rule \[ \nabla(f\sigma) = \sigma \otimes df + f\nabla(\sigma),\]  for (local) sections $\sigma \in \Gamma(U,E)$ and functions $f \in \gamma(U,\mc{O}_X)$ on any analytic open set $U \subset X$.
    \item A (local) $\nabla$-horizontal section of $(E,\nabla)$ is a (local) 
    section $\sigma$ such that $\nabla(\sigma)=0$.
\end{enumerate}
\end{defi}

Since $X$ has dimension one, $\nabla$ is automatically flat, i.e. locally admits a basis of horizontal sections
$(\sigma_1,\ldots,\sigma_r)$, $\nabla(\sigma_i)=0$.
We will call \emph{flat vector bundle} a pair $(E,\nabla)$ as above. We say that two flat vector bundles $(E,\nabla)$ and $(E',\nabla')$ over $X$ are \emph{isomorphic} if there exists an isomorphism of vector bundles $\Phi:E\to E'$
sending $\nabla$-horizontal sections to $\nabla'$-horizontal sections.

We recall the following result from \cite{Weil} and {\cite{atiyah_2}}.

\begin{thm}[Weil]\label{thm:Atiyah-Weil}
Let $E$ be an indecomposable vector bundle of degree $0$. Then $E$ admits a holomorphic connection if,
and only if, for a (any) maximal decomposition $E=F_1\oplus F_2\oplus\cdots\oplus F_k$, each indecomposable component $F_i$ of $E$ has a degree $0$.
\end{thm}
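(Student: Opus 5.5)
The plan is to prove the general statement, for an arbitrary degree $0$ bundle $E=F_1\oplus\cdots\oplus F_k$ with each $F_i$ indecomposable; the indecomposable case is then $k=1$. The tool is Atiyah's obstruction class. Recall that $E$ admits a holomorphic connection if and only if its Atiyah class
$$b(E)\in H^1(X,\End(E)\otimes\Omega^1_X)$$
vanishes. This class is the extension class of the sequence of first-order jets $0\to E\otimes\Omega^1_X\to J^1(E)\to E\to 0$, and a connection is exactly a splitting of this sequence. Serre duality identifies $H^1(X,\End(E)\otimes\Omega^1_X)$ with the dual of $H^0(X,\End(E))$ via the trace pairing $\varphi\mapsto \mathrm{tr}(\varphi\cdot b(E))$. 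So $b(E)=0$ if and only if $\mathrm{tr}(\varphi\, b(E))=0$ for every global endomorphism $\varphi$.

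\textbf{Necessity.} Suppose $E$ carries a holomorphic connection $\nabla$. For each summand, let $\iota_i:F_i\to E$ be the inclusion and $\pi_i:E\to F_i$ the projection. Then $\nabla_i:=(\pi_i\otimes\mathrm{id})\circ\nabla\circ\iota_i$ satisfies the Leibniz rule, so it is a holomorphic connection on $F_i$. Taking the induced connection on $\det F_i$ gives a holomorphic connection on a line bundle. Its Atiyah class is $-2\pi i\, c_1(\det F_i)$ up to a normalising constant, so $\deg F_i=0$. This direction is a direct computation.

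\textbf{Sufficiency.} Since connections on the summands $F_i$ add up to a connection on $E$, it suffices to treat an indecomposable $F$ of degree $0$. By Atiyah, $\End(F)$ is a local algebra, so $\End(F)=\mathbb{C}\cdot\mathrm{id}\oplus N$ with $N$ consisting of nilpotent endomorphisms; this matches the explicit forms in Proposition~\ref{prop:endomorphisms} for $\mc{E}_2\otimes L$ and $\mc{E}_3\otimes L$. We then check the trace pairing on each piece:
\begin{itemize}
\item For $\varphi=\mathrm{id}$, $\mathrm{tr}\, b(F)=b(\det F)$ is a nonzero multiple of $\deg F$, which vanishes by assumption.
\item For nilpotent $\varphi$, we must show $\mathrm{tr}(\varphi\, b(F))=0$.
\end{itemize}

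The second item is the main obstacle. The idea is to use the filtration $0\subset\ker\varphi\subset\ker\varphi^2\subset\cdots\subset F$. By the argument of Lemma~\ref{lem:constantrank}, or working first at generic points, these kernels become subbundles after saturation. One can then choose local \v{C}ech representatives of $b(F)$ that are compatible with this filtration, for instance using local connections adapted to it, and relate $b(F)$ to $b(\varphi(F))$ via naturality of the Atiyah class. The goal is to show that $\varphi\, b(F)$ is represented locally by strictly upper triangular matrices, whose trace is zero.

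Alternatively, on the elliptic curve one can argue concretely. By Atiyah's classification every indecomposable degree $0$ bundle is $\mc{E}_r\otimes L$ with $L\in\Jac(X)$. Each $L$ admits a flat unitary connection, and $\mc{E}_r$ admits the connection $d+J\,dz$ with $J$ a nilpotent Jordan block, whose transition cocycle is exactly the unipotent one $M_{ij}$ used in the proof of Proposition~\ref{prop:endomorphisms}. Tensoring these gives a connection on $\mc{E}_r\otimes L$ directly, which bypasses the nilpotent trace computation entirely. I would try this explicit route first, since it fits the \v{C}ech descriptions already set up in the paper.
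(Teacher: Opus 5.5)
\textbf{Main argument: correct.} The paper gives no proof of this statement; it only quotes Weil and Atiyah \cite{atiyah_2}. You were right to read the hypothesis ``indecomposable'' as a slip and to prove the result for an arbitrary degree $0$ bundle $E$.

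Your necessity direction is fine: compress $\nabla$ to each summand, then pass to $\det F_i$. Your sufficiency argument is exactly Atiyah's proof. You test $b(F)$ against $H^0(X,\End F)=\C\cdot\mathrm{id}\oplus N$ through the Serre-duality trace pairing, and the identity contributes a multiple of $\deg F$. For nilpotent $\varphi$, represent $b(F)$ by $b_{ij}=\nabla_i-\nabla_j$, using local connections that preserve the filtration $K_j=\ker\varphi^j$. Then $\varphi\, b_{ij}$ sends $K_j$ into $K_{j-1}$ and is traceless pointwise.

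Two simplifications:
\begin{itemize}
\item No saturation is needed, and Lemma~\ref{lem:constantrank} does not apply here anyway (it only concerns degree $0$ subbundles). On a smooth curve $F/\ker\varphi^j$ embeds in $F$, so it is torsion free, and $\ker\varphi^j$ is already a subbundle.
\item The detour through $b(\varphi(F))$ and naturality is unnecessary.
\end{itemize}

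\textbf{The explicit route: flawed as written.} You say you would try this route first, but it does not give a connection on $\mc{E}_r$.
\begin{itemize}
\item The connection $d+J\,dz$ lives on $\cO_X^{\oplus r}$, so its underlying holomorphic bundle is trivial, not $\mc{E}_r$. Its monodromy has the form $(\exp(cJ),\exp(c\tau J))$, which is exactly the family the paper's monodromy discussion attributes to the trivial bundle.
\item Suppose instead you put the local forms $J\,dz$ on $\mc{E}_r$, in charts where its transition matrices are unipotent and built from a holomorphic representative $\sigma_{ij}$ of the generator of $H^1(X,\cO_X)$, as in the proof of Proposition~\ref{prop:endomorphisms}. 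These matrices commute with $J$. The forms then glue only if $dM_{ij}=0$, i.e.\ $d\sigma_{ij}=0$. That fails for an arbitrary holomorphic representative.
\end{itemize}

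The missing ingredient is that the generator of $H^1(X,\cO_X)$ can be represented by a locally constant cocycle. This holds because $H^1(X,\C)\to H^1(X,\cO_X)$ is onto. To see it, use the exact sequence $0\to\C\to\cO_X\to\Omega^1_X\to 0$: the map $H^1(X,\Omega^1_X)\to H^2(X,\C)$ is an isomorphism, so $d:H^1(X,\cO_X)\to H^1(X,\Omega^1_X)$ is zero.

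With that choice, $\mc{E}_2$ has constant transition functions and the trivial local connections glue. Then $\mc{E}_r\cong S^{r-1}\mc{E}_2$ (Atiyah) inherits a connection, and the $J\,dz$ term plays no role. Once repaired, this route is a valid and more elementary proof of sufficiency, but only on an elliptic curve, since it relies on Atiyah's classification. The Atiyah-class argument works on any compact Riemann surface.
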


It immediately follows from Theorem \ref{Atiyah:semistablebundles}  
that 

\begin{cor}
    A vector bundle $E$ of degree $0$ and rank $3$ on an elliptic curve $X$ is semi-stable if, and only if, it is flat.
\end{cor}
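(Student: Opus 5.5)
We have to prove the corollary: a rank $3$, degree $0$ bundle $E$ on $X$ is semistable if and only if it admits a holomorphic connection. The plan is to combine Atiyah's classification (Theorem \ref{Atiyah:semistablebundles}) with Weil's criterion (Theorem \ref{thm:Atiyah-Weil}). We read Weil's criterion in its standard form: $E$ admits a holomorphic connection iff every indecomposable summand in a Krull--Schmidt decomposition has degree $0$. This is the sense in which it is used for decomposable bundles, and the form in which the theorem is stated applies it to a maximal decomposition. Since neither semistability nor flatness is affected by twisting with a degree $0$ line bundle, we may ignore the determinant normalization. For a general degree $0$ bundle, the list in Theorem \ref{Atiyah:semistablebundles} should be read with arbitrary degree $0$ line bundles, which are twists of the listed forms.

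\emph{Semistable $\Rightarrow$ flat.} By Theorem \ref{Atiyah:semistablebundles}, $E$ is one of three types:
\begin{itemize}
\item $L_1\oplus L_2\oplus L_3$,
\item $L'\oplus(\mc{E}_2\otimes L)$,
\item $\mc{E}_3\otimes L$,
\end{itemize}
with all line bundles of degree $0$. In each type the Krull--Schmidt decomposition is visible and every summand has degree $0$. Here $\mc{E}_2$ and $\mc{E}_3$ are indecomposable of degree $0$, being iterated nontrivial extensions of $\cO_X$. Weil's criterion therefore gives a connection. One could also build it by hand: a degree $0$ line bundle carries a flat unitary connection, and $\mc{E}_2,\mc{E}_3$ carry the connections $d+N\,\omega$ with $N$ a nilpotent Jordan block and $\omega$ the invariant differential, since the cocycles $\sigma_{ij}$ are periods of $\omega$. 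Connections on summands add, and twisting by a flat line bundle preserves flatness.

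\emph{Flat $\Rightarrow$ semistable.} Suppose $E$ has degree $0$, admits a connection, and is not semistable. Let $F\subset E$ be the maximal destabilizing subbundle, so $\deg F>0$ and $\mathrm{rk}\,F\in\{1,2\}$. On an elliptic curve the Harder--Narasimhan filtration splits. Indeed, for semistable $A,B$ with $\mu(A)>\mu(B)$ we have $\Ext^1(B,A)=H^0(B\otimes A^\vee)^\vee=0$ by Serre duality with $K_X=\cO_X$, since $B\otimes A^\vee$ is semistable of negative slope. Hence $E$ is a direct sum of semistable bundles of distinct slopes, which are not all $0$ since $\deg F>0$. Refine this into indecomposables: each piece is semistable, so its indecomposable summands all have that piece's slope. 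At least one indecomposable summand then has nonzero degree, and Weil's criterion forbids a connection, a contradiction.

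\emph{Main obstacle.} The delicate point is the splitting of the Harder--Narasimhan filtration, which is what lets Weil's criterion apply to an unstable $E$. The argument above via Serre duality and the vanishing of $H^0$ for semistable bundles of negative slope handles it. As an alternative not relying on this splitting, one can argue with the residue/trace directly: a connection $\nabla$ induces the $\cO$-linear second fundamental form $F\to (E/F)\otimes\Omega^1_X=E/F$. If this map vanished, $F$ would be $\nabla$-invariant, hence flat, hence $\deg F=0$, which is false. So there is a nonzero map $F\to E/F$. But every quotient of the semistable bundle $F$ has slope at least $\mu(F)>0$, while every subsheaf of $E/F$ has slope $<\mu(F)$ by maximality of $F$. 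This contradiction finishes the proof without splitting.
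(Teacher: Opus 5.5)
Your proof is correct. The direction semistable $\Rightarrow$ flat follows the paper's route: Atiyah's list plus Weil's criterion. You rightly read that criterion for arbitrary $E$, despite the stray ``indecomposable'' hypothesis in the paper's statement of Theorem \ref{thm:Atiyah-Weil}. Your twist by a cube root of $\det(E)^{-1}$ in $\Jac(X)$ also handles the fact that Theorem \ref{Atiyah:semistablebundles} is only stated for trivial determinant, a point the paper passes over.

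For flat $\Rightarrow$ semistable, the paper only says ``immediately follows''. It tacitly uses Atiyah's fact that indecomposable bundles on $X$ are semistable, so that a bundle all of whose indecomposable summands have degree $0$ is semistable. Your Harder--Narasimhan splitting via Serre duality is the standard proof of exactly that fact, so your first argument is the paper's argument made explicit.

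Your second-fundamental-form argument is a genuinely different route. It uses no classification, no Krull--Schmidt and no Weil criterion, only two facts: a $\nabla$-invariant subbundle has degree $0$, and $\Omega^1_X\cong\cO_X$. It works in every rank and shows exactly where genus one enters: for $g\ge 2$, twisting by $\Omega^1_X$ raises slopes by $2g-2$ and the contradiction disappears. The paper's route, by contrast, buys brevity once Atiyah's classification is granted.

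One optional aside should be corrected or dropped: the ``by hand'' connection $d+N\omega$ does not produce $\mathcal{E}_2$ or $\mathcal{E}_3$.
\begin{itemize}
\item On a globally trivialized bundle, $d+N\omega$ is a connection on $\cO_X^{\oplus r}$ with unipotent monodromy of the form $(e^{cN},e^{c\tau N})$. This is exactly the case the paper's monodromy proposition identifies with the trivial underlying bundle.
\item A cocycle $\sigma_{ij}=z_i-z_j$ built from differences of local primitives of $\omega$ is, up to sign, the \v{C}ech coboundary of $(z_i)$ in $H^1(X,\cO_X)$. It therefore defines the split extension.
\end{itemize}
To obtain $\mathcal{E}_r$ you need locally constant unipotent transition functions whose class in $H^1(X,\cO_X)$ is nonzero. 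An example is the flat bundle $E_{(A,B)}$ with $A=e^{aN}$, $B=e^{bN}$ and $b\neq a\tau$, with its induced connection. Since your main argument rests only on Weil's criterion, this slip does not affect correctness.
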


There is a close link between endomorphisms and flat structure (i.e. connections)
on an elliptic curve: any two connections $\nabla_1,\nabla_2$ on $E$
differ by a Higgs field 
$$\Phi:=\nabla_1-\nabla_2\in H^0(X,\mc{E}nd(E)\otimes\Omega^1_X)$$
where $\mc{E}nd(E)$ is the sheaf of endomorphisms of $E$; but on $X$ elliptic,
$\Omega^1_X=\cO_X$ and $$H^0(X,\mc{E}nd(E)\otimes\Omega^1_X)=\End{E}.$$

\subsection{Monodromy representations of a flat bundle}\label{sec:monodromyofflatbundles}

A flat vector bundle $(E,\nabla)$ can locally be trivialized by analytic gauge transformation. Given a base point $x_0\in X$
and a local trivialization $\Phi:E\vert_U\to \mathbb{C}^3$, for an open analytic neighborhood $U$, then $\Phi$ admits an analytic continuation 
along any path starting from $x_0$; for a loop $\gamma$ based on $x_0$,
we denote by $\Phi^\gamma$ the new local trivialization obtained after
the analytic continuation of $\Phi$ along $\gamma$. We have 
$$\Phi^\gamma=M_\gamma \Phi$$
for a matrix $M_\gamma\in\mathrm{GL}(\mathbb{C}^3)$ that depends only on the 
isotopy class $[\gamma]$ of $\gamma$, and we inherit a \emph{monodromy representation}
$$\rho_\nabla:\pi_1(X,x_0)\to\mathrm{GL}(\mathbb{C}^3)\ ;\ [\gamma]\mapsto M_\gamma$$
Recall the Riemann-Hilbert correspondence:

\begin{thm}
    The monodromy map 
    $$\mathrm{Mon}\ :\ (E,\nabla)\mapsto \rho_\nabla$$
    induces a one-to-one correspondence between the moduli space of flat vector bundles (i.e. equipped with a holomorphic connection) and the moduli space of representations. In other words:
    \begin{itemize}
        \item{\bf Injectivity:} If $(E,\nabla)$ and $(E',\nabla')$ have the same representation up to $\mathrm{GL}(\mathbb{C}^3)$-conjugacy, then they are isomorphic;
        \item{\bf Surjectivity:} For any representation $\rho\in\Hom(\pi_1(X,x_0),\mathrm{GL}(\mathbb{C}^3))$, there exists a flat vector bundle $(E_\rho,\nabla_\rho)$ whose monodromy is $\rho$, up to $\mathrm{GL}(\mathbb{C}^3)$-conjugacy.
    \end{itemize}
\end{thm}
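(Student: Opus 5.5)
The plan is to treat the two halves separately, working with the universal cover $\pi:\tilde X\to X$ (here $\tilde X\cong\C$ and $\pi_1(X,x_0)\cong\Z^2$ acts by deck translations). The key observation is that a flat bundle $(E,\nabla)$ on a simply connected surface is trivial as a flat bundle. Indeed, since $X$ has dimension one, $\nabla$ is flat and locally admits a horizontal frame. Analytic continuation of such a frame along paths is well defined on $\tilde X$ by the monodromy theorem, because $\tilde X$ is simply connected. This yields a global horizontal trivialization $\tilde\Phi:\pi^*E\stackrel{\sim}{\to}\tilde X\times\C^3$ sending $\pi^*\nabla$ to the trivial connection $d$. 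Comparing $\tilde\Phi$ with its translate under a deck transformation $\gamma$ gives exactly $\tilde\Phi\circ\gamma=M_\gamma\tilde\Phi$, with $M_\gamma=\rho_\nabla([\gamma])$ as defined in Section~\ref{sec:monodromyofflatbundles}.

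\textbf{Injectivity.} Suppose $\rho_{\nabla'}=P\rho_\nabla P^{-1}$ for some $P\in\mathrm{GL}(\C^3)$. Replacing $\tilde\Phi'$ by $P^{-1}\tilde\Phi'$, we may assume both flat bundles have literally the same monodromy matrices. Consider the isomorphism $\tilde\Phi'^{-1}\circ\tilde\Phi:\pi^*E\to\pi^*E'$. It is equivariant under deck transformations, because the factors $M_\gamma$ cancel. Hence it descends to a holomorphic isomorphism $E\to E'$. This isomorphism maps horizontal sections to horizontal sections, since both trivializations are horizontal.

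\textbf{Surjectivity.} Given $\rho$, define
$$E_\rho:=(\tilde X\times\C^3)/\pi_1(X,x_0),\qquad \gamma\cdot(z,v)=(\gamma z,\rho(\gamma)v).$$
The action is free and properly discontinuous, and it is holomorphic and linear in $v$. Therefore $E_\rho$ is a holomorphic rank $3$ vector bundle over $X$. The trivial connection $d$ on $\tilde X\times\C^3$ commutes with the constant matrices $\rho(\gamma)$, so it descends to a holomorphic connection $\nabla_\rho$. Its horizontal sections are the images of constant sections, and tracing the definition of $M_\gamma$ shows that its monodromy is $\rho$, up to the choice of base point identification, that is, up to conjugacy. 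Finally, since $X$ is projective, GAGA makes $E_\rho$ an algebraic bundle, so the correspondence also holds in the algebraic category used elsewhere in the paper.

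\textbf{Main obstacle.} The only delicate point is bookkeeping rather than substance: the conventions ($\Phi^\gamma=M_\gamma\Phi$ versus its inverse, and left versus right action) must be kept consistent so that the descended bundle really has monodromy $\rho$ and not $\rho^{-1}$ or its transpose. Both are fixed by checking the definition once on a generator of $\pi_1$. I would also remark that base-point independence of $\rho_\nabla$ up to conjugacy follows from continuing $\Phi$ along a path between base points.
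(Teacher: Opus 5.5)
The paper does not prove this statement. It only recalls it (``Recall the Riemann--Hilbert correspondence'') as a classical fact, so there is no argument of the paper's to compare yours against. Your proof is the standard one and it is correct:
\begin{itemize}
\item for injectivity, a global horizontal frame on the universal cover $\tilde X$, compared with its deck translates;
\item for surjectivity, the suspension $(\tilde X\times\C^3)/\pi_1(X,x_0)$ with its descended trivial connection.
\end{itemize}

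Two small remarks.

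First, for $\mathrm{Mon}$ to \emph{induce} a map between moduli spaces, you also need that isomorphic flat bundles have conjugate monodromy. This follows from the fact you already use: two horizontal trivializations of $\pi^*E$ over the connected space $\tilde X$ differ by a constant matrix. It is worth stating explicitly.

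Second, the convention issue you single out as the main obstacle is harmless here. Since $\pi_1(X,x_0)\cong\Z^2$ is abelian, $\gamma\mapsto\rho(\gamma)^{-1}$ is again a homomorphism. So if your suspension turned out to have monodromy $\rho^{-1}$ rather than $\rho$, you could apply the same construction to $\rho^{-1}$, and surjectivity holds whichever convention is in force.
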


In (2), we emphasize that the bundle itself $E_\rho$ depends 
on the representation $\rho$.

Let $u:\tilde X\cong\mathbb{C}\to X$ be the universal cover with translation lattice $\Lambda=\mathbb{Z}+\tau\mathbb{Z}$. The fundamental group $\pi_1(X)$ is generated by loops $(\alpha,\beta)$ corresponding to $(1,\tau)$ in the lattice. A representation $\rho$
is determined by a pair $(A,B)$ of commuting matrices, namely 
$$A=\rho(\alpha)\ \ \ \text{and}\ \ \ B=\rho(\beta),$$
and we will denote 
$(E_{(A,B)},\nabla_{(A,B)})$ the corresponding flat vector bundle.
This is a useful way to construct a (topologically trivial) holomorphic vector bundle.

Let us first consider the case of line bundles. Here, $\mathrm{GL}(\mathbb{C})\cong\mathbb C^*$ and we have a map
$$\mathbb{C}^*\times\mathbb{C}^*\ni(a,b)\mapsto (E_{(a,b)},\nabla_{(a,b)})$$
which is one-to-one between $\mathbb{C}^*\times\mathbb{C}^*$ and the moduli space of flat rank $1$ vector bundle (Riemann-Hilbert correspondence). On the other hand,
we have 
$$E_{(a,b)}\cong E_{(a',b')}\ \ \ \Leftrightarrow\ \ \ \left\{\begin{matrix}
    a'&=& a e^{c}\hfill\\ b'&=& b e^{c\tau}
\end{matrix}\right.\ \text{for some}\ c\in\mathbb{C}.$$
Indeed, the difference $\phi=\nabla'-\nabla$ on a given vector bundle is 
a holomorphic $1$-form, i.e. $\phi=c\cdot dz$ where $z:\tilde X\stackrel{\sim}{\to}\mathbb{C}$, which gives
after integration that monodromy of $\nabla'$ along a loop $\gamma$ is obtained by multiplying the monodromy of $\nabla$  by $e^{c\tilde\gamma}$, where $\tilde\gamma$ a lift of $\gamma$ on $\tilde X$. 

On the other hand, one can consider the ratio $\frac{b}{a^\tau}$, which is well defined modulo 
multiplication by $e^{2i\pi\tau}$, as an element of the elliptic curve 
$$\frac{b}{a^\tau}\in\mathbb{C}^*/ \langle e^{2i\pi\tau} \rangle \cong\Jac(X).$$
The preceding discussion can be reformulated into the exact sequence
$$0\to H^0(X,\Omega^1_X)\to H^1(X,\mathbb{C}^*)\to\Jac(X)\to 0$$
that can be extracted from the long exact sequence associated to 
$0\to\mathbb{C}^*\to\cO^*_X\stackrel{d\log}{\to}\Omega^1_X\to0$. 
Here $H^1(X,\mathbb{C}^*)$ is interpreted as the moduli space of representations.
With previous notations, the maps are respectively $cdz\mapsto (e^c,e^{c\tau})\in\mathbb{C}^*\times\mathbb{C}^*\ni(a,b)\mapsto \frac{b}{a^\tau}$.

Coming back to the rank $3$ case. Given a rank $3$ flat vector bundle $(E,\nabla)$,
recall that $\nabla$ induces a \emph{trace connection} $\mathrm{tr}(\nabla)$
on $\det(E)$; with our notations, its monodromy is given by $(a,b)=(\mathrm{tr}(A),\mathrm{tr}(B))$. Since we are considering vector bundles with trivial determinant, 
it is natural to only consider connections with trivial trace, i.e. \emph{trace-free}.

\begin{prop}
    Let $(E,\nabla)$ be a trace-free rank $3$ flat vector bundle over $X$, and let 
    $(A,B)\in\mathrm{SL}(\mathbb{C}^3)$ be the corresponding monodromy. Then, we are in one of the 
    following situations up to $\mathrm{SL}(\mathbb{C}^3)$-conjugacy and permutation of $A$ and $B$:
    \begin{enumerate}
    \item[(i)] $A=\begin{pmatrix}
            a_1&0&0\\0&a_2&0\\0&0&a_3
        \end{pmatrix}$ and $B=\begin{pmatrix}
            b_1&0&0\\0&b_2&0\\0&0&b_3
        \end{pmatrix}$ with $\prod_ia_i=\prod_ib_i=1$.
        
        \noindent Then, $E$ is of type (1), (2.2) or (3.3)) depending whether the cardinality of $\left\{\frac{b_1}{a_1^\tau}, \frac{b_2}{a_2^\tau},\frac{b_3}{a_3^\tau}\right\}\subset\Jac(X)$ is $3$, $2$ or $1$, respectively.
    \item[(ii)] $A=\begin{pmatrix}
            a^{-2}&0&0\\0&a&1\\0&0&a
        \end{pmatrix}$ and $B=\begin{pmatrix}
            b^{-2}&0&0\\0&b&b_1\\0&0&b
        \end{pmatrix}$ with $a,b\not=0$. 
        
        \noindent Then $E$ is of type (2.1), (2.2), (3.2) or (3.3) depending on 
        $\frac{b}{a^\tau}\subset\Jac(X)$ and $\frac{ab_1}{b}$.
    \item[(iii)] $A=\begin{pmatrix}
            a&1&0\\0&a&1\\0&0&a
        \end{pmatrix}$ and $B=\begin{pmatrix}
            b&b_1&b_2\\0&b&b_1\\0&0&b
        \end{pmatrix}$ with $a^3=b^3=1$.

        \noindent Then $E$ is of type (3.1), (3.2), (3.1) depending on $\frac{ab_1}{b}$ and $b_2$
    \end{enumerate}
\end{prop}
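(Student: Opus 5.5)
The plan is to first reduce $(A,B)$ to simultaneous normal form, and then read off the type of $E$ from that normal form.

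\textbf{Step 1: normal form.} Since $\pi_1(X)$ is abelian, $A$ and $B$ commute. I will use the standard fact that commuting matrices preserve each other's generalized eigenspaces, which splits the argument according to the Jordan type of $A$. After possibly exchanging $A$ and $B$, I choose $A$ to be the matrix with the "largest" Jordan structure.

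\begin{itemize}
\item If both $A$ and $B$ are semisimple, they are simultaneously diagonalizable. This gives case (i).
\item If one of them, say $A$, has a Jordan block of size $2$ with eigenvalue $a$ and a distinct eigenvalue on the complement, then $B$ preserves the decomposition. On the $2$-dimensional block, the centralizer of a Jordan block is $\{\lambda I+\mu N\}$. This gives the shape in (ii), and the determinant condition forces the third eigenvalues $a^{-2}$ and $b^{-2}$.
\item If $A$ has a Jordan block of size $2$ and the complementary eigenvalue equals $a$, the centralizer is larger; after conjugating inside it I still expect to reach form (ii), possibly after swapping $A$ and $B$.
\item If $A$ is a single Jordan block of size $3$, its centralizer is the set of polynomials in $A$. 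This gives (iii) with $a^3=b^3=1$.
\end{itemize}

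\textbf{Step 2: identifying $E$.} I will use that $E_{(A,B)}$ decomposes according to any simultaneous block decomposition of $(A,B)$.

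\begin{itemize}
\item In case (i), $E=\bigoplus E_{(a_i,b_i)}$. By the rank-one discussion above, $E_{(a_i,b_i)}\cong E_{(a_j,b_j)}$ if and only if $\frac{b_i}{a_i^\tau}=\frac{b_j}{a_j^\tau}$ in $\Jac(X)$. Counting distinct values then gives type (1), (2.2) or (3.3) via the list \eqref{eq:typesofssvb}.
\item For a unipotent-type block pair $\bigl(a(I+N/a),\ b(I+(b_1/b)N)\bigr)$, I factor out the scalar line bundle $E_{(a,b)}$. The remaining unipotent flat bundle is $E_{(I+N',\,I+cN')}$, an extension of $\cO_X$ by $\cO_X$. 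It is split exactly when the holomorphic class vanishes. That class is the image of the $H^1(X,\C)$-class $(1,c)$ in $H^1(X,\cO_X)$, which is zero exactly when $(1,c)$ is a multiple of $(1,\tau)$, i.e.\ when $c$ takes one specific value. So $\frac{ab_1}{b}$ decides between $\mc{E}_2\otimes L$ and $L\oplus L$, while $\frac{b}{a^\tau}$ decides whether $L^3=\cO_X$. This yields (2.1), (2.2), (3.2) or (3.3).
\item In case (iii), write $B=b(I+c_1N+c_2N^2)$. The filtration has successive extension classes given by $c_1$ relative to $\tau$, and the top-corner class also involves $b_2$. This distinguishes $\mc{E}_3\otimes L$, $L\oplus(\mc{E}_2\otimes L)$ and, at degenerate values, $L^{\oplus3}$.
\end{itemize}

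\textbf{Main obstacle.} The hard part is the last bullet: computing the holomorphic extension classes of $E_{(A,B)}$ for the $3\times3$ unipotent case and deciding exactly when they vanish. My first approach is a gauge transformation $g(z)=\exp(zM)$ on $\tilde X\cong\C$, which turns the connection into $d+M\,dz$. After this, $A$ can be normalized to $I$ while $B$ becomes $e^{-\tau M}B$. The bundle is then read off from the standard automorphy factor, the same technique as in the rank-one discussion above.
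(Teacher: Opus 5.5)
Your overall route matches the paper's: Jordan form of $A$ with its centralizer, possibly after swapping, then factoring out $E_{(a,b)}$ and testing the unipotent extension class against the period vector $(1,\tau)$ of $dz$. Your Step 2 is sound. In particular, the gauge $\exp(zM)$ turns the monodromy into the automorphy factor $\exp\bigl(\log(B/b)-\tau\log(A/a)\bigr)$ along $\tau$, and this correctly settles (iii).

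The genuine gap is the third bullet of Step 1, and it cannot be closed, because in that situation the list (i)--(iii) is not exhaustive. Suppose $A$ has Jordan type $(2,1)$ with a single eigenvalue $a$. The subcases where $B$ has two distinct eigenvalues, is a full Jordan block, or has nilpotent part proportional to $A-aI$ do land in (ii), or in (iii) after swapping. But take $a^3=b^3=1$ and
\[
A=\begin{pmatrix}a&0&1\\0&a&0\\0&0&a\end{pmatrix},\qquad
B=\begin{pmatrix}b&0&0\\0&b&1\\0&0&b\end{pmatrix}.
\]
The products of $A-aI$ and $B-bI$ vanish in either order, so $A$ and $B$ commute. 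They lie in $\mathrm{SL}(\C^3)$, so by Riemann--Hilbert they are the monodromy of a trace-free flat bundle. Here $A-aI$ and $B-bI$ are nonzero, square-zero and linearly independent. These properties are preserved by simultaneous conjugation and by the swap $A\leftrightarrow B$. Now compare with the three normal forms:
\begin{enumerate}
\item[(i)] both matrices are semisimple;
\item[(iii)] the nilpotent part of the first matrix has nonzero square;
\item[(ii)] a matrix has a single eigenvalue only when its parameter is a cube root of unity, and then both nilpotent parts are multiples of the same elementary matrix, hence proportional.
\end{enumerate}
So no conjugation, with or without swapping, brings this pair to (i), (ii) or (iii).

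The paper's own sketch (``maybe permuting $A$ and $B$, we arrive at cases (i), (ii) and (iii)'') passes over exactly this subcase. What is needed is an extra normal form, not a better argument:
\begin{enumerate}
\item[(iv)] $A=aI+N_1$, $B=bI+N_2$ with $a^3=b^3=1$, $N_1,N_2$ linearly independent, and $N_iN_j=0$ for all $i,j$.
\end{enumerate}
In (iv), the sum of the images of the $N_i$ lies in the intersection of their kernels. One then checks that such a pair is conjugate either to the pair above or to its transpose.

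Your Step 2 handles (iv) at once. The matrix $Y=\frac{1}{b}(B-bI)-\frac{\tau}{a}(A-aI)$ is nonzero, of rank one, with $Y^2=0$. Hence $E\cong L\otimes(\cO_X\oplus\mc{E}_2)$ with $L=E_{(a,b)}$, i.e.\ type (3.2), and likewise for the transpose.

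So the proposition can only be proved once (iv) is added to the list. The addition is harmless for the rest of the paper, since type (3.2) bundles carry no semistable parabolic structure.
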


\begin{proof}
    One can first observe that monodromy matrices $A$ and $B$ must commute, since $\pi_1(X)$ is abelian, and be of determinant $1$. Assuming that $A$ is in Jordan normal form, one can look for commuting matrices $B$ and try to simplify without changing $A$. Maybe permuting $A$ and $B$, we arrive at cases (i), (ii) and (iii) of the statement. 
    
For instance, if 
$A=\begin{pmatrix}a^{-2}&0&0\\0&a&1\\0&0&a\end{pmatrix}$ with $a^3\not=1$,
then 
$B=\begin{pmatrix}b&0&0\\0&b_{11}&b_{12}\\0&b_{21}&b_{22}\end{pmatrix}$; by further conjugating by a matrix $C$ of the same shape as $B$ (and therefore commuting with $A$), we can put $B$ into Jordan normal form, i.e. either diagonal and we are in case (i), or with a rank $2$ Jordan block, and after permutation we are in case (iii).

    We now explain what is the type of the bundle, depending on the monodromy. In case (i), each factor $\mathbb C e_i$ is invariant and gives rise to a flat rank one subbundle of $E$, where $(e_1,e_2,e_3)$ denotes the standard basis. Therefore, we have a decomposition $E=L_1\oplus L_2\oplus L_3$ with
    $L_i=E_{(a_i,b_i)}$ under notations of the beginning of the section. Moreover, $L_i\cong L_j$ if, and only if, 
$\frac{b_i}{a_i^\tau}$ and $\frac{b_j}{a_j^\tau}$ define the same point in the Jacobian. In case (ii), we have a decomposition $E=L\oplus F$ for subbundles $L,F$ of rank $1$ and $2$ respectively.
Then need to understand whether $F$ is decomposable or not. After tensoring by $E_{(a,b)}^{-1}$, we have to decide if 
$$\tilde A=\begin{pmatrix}1&\frac{1}{a}\\0&1\end{pmatrix}
\ \ \ \text{and}\ \ \ 
\tilde B=\begin{pmatrix}1&\frac{b_1}{b}\\0&1\end{pmatrix}$$
is the monodromy of $\mathcal E_2$ or the trivial bundle. 
If we assume $E_{\tilde A,\tilde B}$ is trivial, then we can
make explicit the flat structure. Precisely, it is given by a connection of the form
$\nabla=d+\begin{pmatrix}\alpha&\beta\\\gamma&\delta\end{pmatrix}dz$ where 
$dz$ is the holomorphic $1$-form on $X$, and $\alpha,\beta\gamma,\delta\in\mathbb C$ are constants. But the line subbundle generated
by $e_1$ is $\nabla$-invariant and the restriction of $\nabla$ must be trivial (because with trivial monodromy), so $\alpha=\gamma=0$;
because it is trace-free, we also get $\delta=0$.
Then, denoting $\beta=dz$ for the uniformizing variable $z$, we see by integration that monodromy must be of the form 
$$\tilde A=\begin{pmatrix}1&c\\0&1\end{pmatrix}
\ \ \ \text{and}\ \ \ 
\tilde B=\begin{pmatrix}1&c\tau\\0&1\end{pmatrix}$$
where $(1,\tau)$ are periods of $dz$.
Therefore, $E$ is of type (2.1) or (3.2) if, and only if,
$\frac{a b_1}{b}=\tau$. Then to decide between (3.2) or (2.1),
we just have to check if $\frac{b}{a^\tau}\subset\Jac(X)$
is $3$-torsion or not.

The remaining cases are similar.
\end{proof}

\subsection{A universal family}\label{sec:universalfamilyTu}

Let us revisit Tu's moduli space. For any $(b_1,b_2)\in(\mathbb C^*)^2$, let us consider the representation given by 
\begin{equation}\label{eq:universalfamilydecomposable}
A=\begin{pmatrix}1&0&0\\0&1&0\\0&0&1\end{pmatrix}\ \ \ \text{and}\ \ \ 
B=\begin{pmatrix}b_1&0&0\\0&b_2&0\\0&0&\frac{1}{b_1b_2}\end{pmatrix}
\end{equation}
and the corresponding family of decomposable flat vector bundles
$E_{(A,B)}=L_1\oplus L_2\oplus L_3$. We have a natural map
$$(\mathbb C^*)^2\to \mathrm{Bun}_{\mc{O}_X}(3,X)\cong \check{\PP}^2\ ;\ (b_1,b_2)\mapsto [E_{(A,B)}].$$
By section \ref{sec:monodromyofflatbundles}, the line bundle
$L_1$ is determined by $b_1\in\mathbb{C}^*/<e^{2i\Pi\tau}>\cong\Jac(X)$. We get a natural covering map:
$$(\mathbb C^*)^2\to (\Jac(X))^2\ ;\ (b_1,b_2)\mapsto (L_1,L_2)$$
through which the previous map factors: we get a $6$-fold ramified covering
$$(\Jac(X))^2\to \mathrm{Bun}_{\mc{O}_X}(3,X)\cong \check{\PP}^{2}\ ;\ (L_1,L_2)\mapsto [L_1\oplus L_2\oplus(L_1\otimes L_2)^{-1}]. $$
This covering is the quotient of $(\Jac(X))^2$
by the symmetric group $S_3$ acting by permutation of factors
of $L_1\oplus L_2\oplus L_3$.
It is ramifying over the dual cuspidal curve
$\check{X}\subset\check{\PP}^2$. 
So the family $E_{(A,B)}$ is a universal family for the moduli space $\mathrm{Bun}_{\mc{O}_X}(3,X)$. 

We can locally describe the ramified covering near the trivial bundle $E=\mathcal O_X\oplus\mathcal O_X\oplus\mathcal O_X$ as follows. Local coordinates of $(\Jac(X))^2$ at $(\mathcal O_X,\mathcal O_X)$ are given by $(z_1,z_2)$ where $b_j=e^{2i\pi z_j}$.
Then, the permutation of factors induces the transformation group
generated by:
\begin{equation}\label{eq:permutations}
\sigma_{12}(z_1,z_2)=(z_2,z_1),\ \ \ \sigma_{23}(z_1,z_2)=(z_1,-z_1-z_2)\ \ \ \text{and}\ \ \ \sigma_{13}(z_1,z_2)=(-z_1-z_2,z_2).
\end{equation}
The invariant polynomial functions are generated by:
$$F_2(z_1,z_2)=z_1^2+z_1z_2+z_2^2\ \ \ \text{and}\ \ \ 
F_3(z_1,z_2)=z_1z_2(z_1+z_2).$$
The quotient map is locally given by 
$$(z_1,z_2)\mapsto (F_2,F_3)$$
sending fixed points of $\sigma_{ij}$
$$\Delta_{12}:\{z_1=z_2\},\ \ \ \Delta_{23}:\{z_1+2z_2=0\}\ \ \ \text{and}\ \ \ \Delta_{13}:\{2z_1+z_2=0\}$$
onto the cusp 
$$\left(\frac{F_2}{3}\right)^3=\left(\frac{F_3}{2}\right)^2.$$

As we shall see (see Propositions \ref{prop:stableunstable}, \ref{prop:P1fiber(1)}), those vector bundles admitting stable parabolic structures are exactly those of type (1), (2.1) and (3.1).
We see that each $S$-equivalence class in Tu's moduli space
contains exactly one isomorphism class of vector bundle of that type. We can provide an alternate universal family as follows.
For $(b_1,b_2)\in(\mathbb C^*)^2$, let us now consider the representation given by 
\begin{equation}\label{eq:universalfamily}
A=\begin{pmatrix}1&0&0\\0&1&0\\0&0&1\end{pmatrix}\ \ \ \text{and}\ \ \ 
B=\begin{pmatrix}b_1&1&0\\0&b_2&1\\0&0&\frac{1}{b_1b_2}\end{pmatrix}
\end{equation}
and the corresponding family of flat vector bundles
$E_{(A,B)}$. 

\begin{prop}\label{prop:universalparabolicbundle}
The family of vector bundles $(b_1,b_2)\mapsto E_{(A,B)}$ given by (\ref{eq:universalfamily}) only contains vector bundles of types
(1), (2.1) and (3.1) near $(b_1,b_2)=(1,1)$.
\end{prop}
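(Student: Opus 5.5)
The plan is to read off the type of $E_{(A,B)}$ from the Jordan structure of $B$, and to use the Riemann--Hilbert dictionary of Section \ref{sec:monodromyofflatbundles} to tell a decomposable block from an Atiyah bundle $\mc{E}_k\otimes L$. Set $b_3=\frac{1}{b_1b_2}$.

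\emph{Regularity of $B$.} The matrix $B$ is upper triangular with all superdiagonal entries equal to $1$. Hence it is \emph{regular}: each eigenvalue has a single Jordan block. Since $A=I$, the generalized eigenspaces of $B$ are invariant under the whole monodromy. They therefore give a flat decomposition of $E_{(A,B)}$, with one summand per distinct eigenvalue, and each summand has monodromy $(I,J)$ for $J$ a single Jordan block.

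\emph{Local injectivity near $1$.} Near $(b_1,b_2)=(1,1)$ all $b_i$ are close to $1$. The map $\mathbb{C}^*\to\mathbb{C}^*/\langle e^{2i\pi\tau}\rangle\cong\Jac(X)$ is injective on a small neighbourhood of $1$. So $E_{(1,b_i)}\cong E_{(1,b_j)}$ if and only if $b_i=b_j$. Moreover, $b_i^3=1$ with $b_i$ near $1$ forces $b_i=1$.

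\emph{Case analysis on the eigenvalues.}
\begin{enumerate}
\item If $b_1,b_2,b_3$ are pairwise distinct, $B$ is diagonalizable. Then $E=\bigoplus_i E_{(1,b_i)}$ with pairwise non-isomorphic factors by local injectivity, which is type (1).
\item If exactly two eigenvalues coincide, say $b=b_i=b_j\neq b_k$, then $E=L'\oplus F$. Here $F\cong L\otimes F_0$ with $L=E_{(1,b)}$, and $F_0$ has unipotent monodromy $(I,\begin{pmatrix}1&\ast\\0&1\end{pmatrix})$ with $\ast\neq0$. Since $L^3\not\cong\cO_X$ (as $b\neq1$ near $1$), it remains to show that $F_0\cong\mc{E}_2$ rather than $\cO_X^2$.
\item If all three eigenvalues coincide, then $b=1$, $B$ is a single unipotent Jordan block, and we must show $E\cong\mc{E}_3$, i.e.\ type (3.1) rather than (3.2) or (3.3).
\end{enumerate}

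\emph{Ruling out a more decomposable bundle (the main obstacle).} In cases 2 and 3 I argue as in the proof of the previous proposition. Any two connections on a fixed bundle differ by $\Phi\,dz$ with $\Phi\in\End(E)$, and these endomorphisms are listed in Proposition \ref{prop:endomorphisms}. I compare the monodromies of all connections on each candidate bundle with $(I,B)$.
\begin{itemize}
\item On the trivial bundle $\cO_X^k$, every connection is $d+\Phi\,dz$ with $\Phi$ constant. Its monodromy is $(e^{-\Phi},e^{-\tau\Phi})$. If the first matrix is $I$, then $\Phi$ is semisimple with eigenvalues in $2i\pi\Z$, so the second matrix is diagonalizable. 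This is impossible for a nontrivial Jordan block, and settles $F_0\not\cong\cO_X^2$ in case 2 and rules out (3.3) in case 3.
\item For $\cO_X\oplus\mc{E}_2$, I fix a reference connection $\nabla_0$ coming from the flat trivialization $d+N\,dz$ on $\mc{E}_2$, with $N$ nilpotent. The general connection is then $\nabla_0+\Phi\,dz$ with $\Phi$ in the algebra $G$ of case (3.2) of Proposition \ref{prop:endomorphisms}. Since $N$ lies in this family and $G$ is commutative enough, the monodromy is $(e^{-M},e^{-\tau M})$ with $M=N+\Phi$. Requiring $e^{-M}=I$ forces $M$ semisimple, so $e^{-\tau M}$ is diagonalizable again, a contradiction. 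I expect the care needed to justify the simultaneous exponential form (commutation of $N$ with $G$) to be the delicate point. If it fails, I would instead directly solve for the horizontal frames, as in the proof of case (ii) above.
\end{itemize}
This excludes (3.2) and completes the proof.
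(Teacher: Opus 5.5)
Most of your reduction is correct, and it is more thorough than the paper's proof, which only treats $b_1=b_2\neq 1$ by putting $B$ in Jordan form and appealing to the earlier monodromy classification. The correct parts are the regularity of $B$, the splitting along generalized eigenspaces, local injectivity of $\C^*\to\Jac(X)$ near $1$, and the trivial-bundle argument, which gives $F_0\not\cong\cO_X^2$ and excludes (3.3).

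\textbf{The gap: excluding $\cO_X\oplus\mc{E}_2$ in case 3.} There is no frame of $\mc{E}_2$ in which a connection reads $d+N\,dz$. Pairs $(e^{-M},e^{-\tau M})$ are exactly the monodromies of the connections $d+M\,dz$ on $\cO_X^k$. So, by Riemann--Hilbert injectivity, your formula for the monodromy of \emph{every} connection on $\cO_X\oplus\mc{E}_2$ would force $\cO_X\oplus\mc{E}_2\cong\cO_X^3$.

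Your own argument shows the claim is false. Write $J_k(1)$ for the unipotent Jordan block of size $k$. Your Case 2 reasoning with $b=1$ shows that the flat bundle with monodromy $(I,J_2(1))$ is $\mc{E}_2$. Hence $\cO_X\oplus\mc{E}_2$ carries a connection with monodromy $(I,1\oplus J_2(1))$: the first matrix is $I$ and the second is not diagonalizable. So the implication ``first matrix $I$ $\Rightarrow$ second diagonalizable'' fails on $\cO_X\oplus\mc{E}_2$, and your argument proves too much.

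What you dropped is the factor of automorphy of $\mc{E}_2$; commutation is not the real issue. The diagonalizable/non-diagonalizable dichotomy cannot separate $\mc{E}_3$ from $\cO_X\oplus\mc{E}_2$; only the Jordan type $(3)$ versus $(2,1)$ can. Your fallback ``as in case (ii)'' does not help either, since that computation only concerns connections on the trivial rank-$2$ bundle.

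\textbf{Repair via connections.} Take $\nabla_0$ with monodromy $(I,B_0)$, where $B_0=1\oplus J_2(1)$. Its flat sections of $\mathcal{E}nd$ form the centralizer of $B_0$, which is $5$-dimensional. $\End(\cO_X\oplus\mc{E}_2)$ is also $5$-dimensional. Beware that the list in case (3.2) of Proposition \ref{prop:endomorphisms} drops a parameter: the scalars on the factors $\cO_X$ and $\mc{E}_2$ are independent. Hence every endomorphism is $\nabla_0$-flat, and the connections on $\cO_X\oplus\mc{E}_2$ have monodromy $(e^{-C},B_0e^{-\tau C})$ with $C$ commuting with $B_0$. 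If $e^{-C}=I$ then $C$ is semisimple. The unipotent part of $B_0e^{-\tau C}$ is then $B_0$, of Jordan type $(2,1)$, which is never conjugate to $J_3(1)$.

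\textbf{Simpler repair via sections.} Since $A=I$, the bundle $E_{(I,B)}$ is the quotient of $\C^*\times\C^3$ by $(x,v)\mapsto(qx,Bv)$, with $q=e^{2i\pi\tau}$. Writing a section as a Laurent series $\sum_n s_nx^n$ gives $(B-q^n)s_n=0$. Since $q^n\neq 1$ for $n\neq 0$, this yields $h^0(E_{(I,B)})=\dim\ker(B-I)$ for unipotent $B$. Now $h^0$ equals $1,2,3$ for $\mc{E}_3$, $\cO_X\oplus\mc{E}_2$, $\cO_X^3$ respectively. So $\dim\ker(J_3(1)-I)=1$ gives $E\cong\mc{E}_3$ in case 3, and the same count gives $F_0\cong\mc{E}_2$ in case 2.
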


We thus get a local universal family at an analytic neighborhood of the trivial bundle (which is one of the cusps of $\check{X}$).

 \begin{proof}
     For instance, consider $b_1=b_2\not=1$. Then we have two distinct eigenvalues $b=b_1$, and $b^{-2}$, and we can conjugate the pair $(A,B)$ to 
     $$
A'=\begin{pmatrix}1&0&0\\0&1&0\\0&0&1\end{pmatrix}\ \ \ \text{and}\ \ \ 
B'=\begin{pmatrix}b&1&0\\0&b&0\\0&0&b^{-2}\end{pmatrix}
$$
and we deduce that $E_{(A,B)}\cong E_{(A',B')}$ is of type (2.1).
Note that replacing by $(b_1,b_2)=(b,e^{2i\pi\tau}b)$
will give a vector bundle in the same $S$-equivalence class as $(b,b)$, but will be of the type
(2.2), i.e. decomposable.
 \end{proof}


\section{The moduli space of rank 3 trivial determinant, parabolic bundles}

In this section we recall the definition of parabolic (semi)stability and show that in our set-up it implies usual vector bundle (semi)stability. We describe the wall-chamber decomposition in our case and then classify the bundles that are always stable, never stable and those that are stable only with respect to the weights in one of the chambers.

\subsection {Parabolic vector bundles}
We now recall some well-known definitions and results about parabolic vector bundles. Although the definitions and results exist in higher rank and for more points, to make it easier for the reader we only state them in our set-up. For the more general version we refer the reader to \cites{M-S, M-Y}.

\begin{defi}:\label{nota-parbundle}
A \emph{parabolic bundle} $E$ of rank $3$ on X is a vector bundle $E$ of rank $3$ on $X$ with a \emph{parabolic structure} on $p$ given by
\begin{enumerate}
    \item a filtration of the fibre $E_{p}$ (which is a vector space of dimension $3$)
    \[E|_{p} = E_{p,3} \supsetneq E_{p,2} \supsetneq E_{p,1} \supsetneq {0}\]
    \item a sequence of real numbers $\bmu = (\mu_1, \mu_2, \mu_3) \in \mathbb{R}^3$ called \emph{parabolic weights} satisfying
\begin{itemize}
    \item $\mu_3 \leq  \mu_2 \leq  \mu_1$;
     \item  $\mu_1 - \mu_3 < 1$; and
    \item $\sum\limits_{i=1}^{3}\mu_{i} =0.$
\end{itemize}
\end{enumerate}
The parabolic structure is said to have \emph{full flags} since
$\mathrm{dim}(E_{p,i}/E_{p,i+1}) = 1$ for all $i \in \{1,2\}$. We will only be considering full flags.
\end{defi}

\begin{rem}\label{range of weights}
In the above definition we do not follow the usual convention in the literature to choose the weights $\mu_i$ to be between $0$ and $1$. However, as we see later in Remark \ref{lemma-parsemistability} this convention is not necessary as adding a constant to the weights does not change the stability condition.
\end{rem}

\begin{defi} 
The \emph{parabolic degree} of a parabolic bundle $(E,p)$ is defined as
\[ \mathrm{pardeg}(E,\Eb) := d + \sum\limits_{i =1}^{3}\mu_{i}, \]
where $d$ denotes the degree of the vector bundle. 
The \emph{parabolic slope} of $(E,\Eb)$ is defined as: 
\[\mu_{\mathrm{par}}(E,\Eb) := \frac{\mathrm{pardeg}(E,\Eb)}{\rk(E)}.\]
\end{defi}

\begin{defi}\label{defi: parabolic subbundle} 
A \emph{parabolic subbundle} $(E',\Eb')$ of $(E,\Eb)$ is a subbundle $E'\subset E$ of rank $r'$ with an \emph{induced parabolic structure}. The \emph{induced parabolic structure} on $E'$ at the point $p$ is given as follows:
\begin{enumerate}
\item the quasi-parabolic structure on $E'$ at the point $p$, i.e. the filtration in $E'_p$ is given by
\[ E'_{\bullet}: E'_{p} = E_{p,r'} \supsetneq E'_{p,r'-1} \supsetneq   E'_{p,1} \supsetneq {0},\]
where $E'_{p,i} = E'_{p} \cap E_{p,i}$, i.e. we are considering the intersection with the already given filtration in $E_{p}$, and also scrapping all the repetitions of subspaces in the filtration.
\item The parabolic weights ${\mu'_{r'}} < \dots < {\mu'_{1}}$
 are taken to be the largest possible among the given parabolic weights which are allowed after the intersections, i.e.
 \[ \mu'_{i} = \mathrm{min}_{j}\{\mu_{j}|E'_{p}\cap E_{p,j} = E'_{p,i}\} = \mathrm{min}_{j} \{ \mu_j |E'_{p,i}\subseteq E_{p,j} \}\]
That is to say, the parabolic weight associated to $E'_{p,i}$ is the weight $\mu_j$ such that $E'_{p,i} \subseteq E_{p,j}$ but $E'_{p,i}\nsubseteq E_{p,j-1}$.
\end{enumerate}
 \end{defi}

\begin{rem}\label{rank of sub-bundles}
 Note that in our set-up $(E,p)$ is of rank $3$, therefore the rank $r'$ of the parabolic subbundle $(E',p)$ in Definition \ref{nota-parbundle} can only be $1$ or $2$. 
 \end{rem}

\begin{defi}\label{defi:parstab}
A parabolic bundle $(E,\Eb)$ is \emph{parabolic (semi)stable} if 
for any nonzero proper parabolic subbundle $(E',\Eb') \subset (E,\Eb)$, we have $\mu_{\mathrm{par}}(E',\Eb') (\leq) <\mu_{\mathrm{par}}(E,\Eb)$.
In other words, a parabolic vector bundle $(E,\Eb)$ is (semi)stable iff for any parabolic subvector bundle $(E',p)$: 
\[ \pind((E,\Eb), (E',\Eb')):= \mu_{\para}(E,\Eb) - \mu_{\para}(E',\Eb') \geq 0 \]
with equality holding for the strictly semistable parabolic vector bundles $(E,\Eb)$. 
\end{defi}

\begin{rem} \label{lemma-parsemistability}
Since $\sum_{i=1}^{3} \mu_i=0$, it is easy to see that $(E,\Eb)$  is parabolic (semi)stable if for any subbundle $F \subseteq E $, $\mathrm{pardeg}(F,{F_{\bullet}}) (\leq)<0$. Note that although in the usual definition of parabolic bundles, one does not ask for $\sum_{i=1}^{3} \mu_i=0$, we can assume this because adding the same constant to each  parabolic weight $\mu_i$  does not affect the (semi)stability of a parabolic vector bundle  given by a full flag. Indeed, suppose $(E,\Eb)$ is parabolic (semi)stable for the parabolic weights $(\mu_1, \mu_2, \mu_3) \in \R^3$ and $\Eb$ is given by full flag.  
Let $\lambda \in \R$, we claim that $(E,\Eb)$ is still parabolic (semi)stable for the parabolic weights $(\mu_1 + \lambda, \mu_2 + \lambda, \mu_3 + \lambda) \in \R^3$.  Indeed, let $(E',\Eb ')$ be a parabolic subbundle of $(E,\Eb)$. Then
\[ \frac{\deg E + \sum\limits_{i=1}^{3} (\mu_i + \lambda)}{3} -  
\frac{\deg E' + \sum\limits_{j \in J}^{} (\mu_j + \lambda)}{\mathrm{rk} E'} \\ 
= \frac{\deg E + \sum\limits_{i=1}^{3} \mu_i}{3} -  
\frac{\deg E' + \sum\limits_{j \in J}^{} \mu_j }{\mathrm{rk} E'}  
\]
where $J \subseteq \{1, 2, 3\}$ is the index set for the parabolic structure induced in $E'$ and the equality follows from the fact that $|J| = \rk E'$. Therefore, $\pind((E,\Eb), (E',\Eb'))$ does not change if we change $\mu_i \mapsto \mu_i + \lambda$, $i=1, 2, 3$.

Therefore given any $d \in \R$, we can translate $(\mu_1, \mu_2, \mu_3) \in \R^3$ by $\lambda$ so that $\sum\limits_{i=1}^3 \mu_i = d$. Setting $\lambda = - (\sum_{i=1}^{3} \mu_i)/3$ gives $d = 0$ as required in Definition \ref{nota-parbundle}. 
\end{rem}

\begin{defi}
 We denote by $\Bun^{{\bmu}}_{\mc{O}_X}(3,X,p)$ the moduli space of ${\bmu}$-semistable, rank $3$ parabolic bundles on the curve $X$ with trivial determinant, on the curve $X$. In \cite{M-S}, Mehta and Seshadri showed that this is a normal, projective variety. When ${\bmu}$ is \emph{generic}, i.e when parabolic stability and semistability coincide, this variety is in fact smooth. 
\end{defi}

In general, the dimension of the moduli space of ${\bmu}$-semistable parabolic bundles (even for $g=0,1$, see \cite{Boden-Yokogawa}) is given by $(r^{2}-1)(g-1) + \sum \limits_{p \in D}f_p$ where $r$ denotes the rank, $g$ is the genus of the curve, $D$ is the set of parabolic points and $f_p$ denotes the dimension of the flag. Recall that the dimension of the flag $f_p$ is $\frac{1}{2}(r^{2} - \sum\limits_{i=1}^{s_p}m_{i,p})^2$, where $m_{i,p} = \dim E_{i+1,p}- E_{i,p}$ and $s_p$ is the length of the flag. Since we consider a unique parabolic point $p$, complete flags and rank $3$, for us $s_p=3, m_{i,p} =1, r =3, g=1$, we have:

 \[\dim(\mathrm{Bun}^{{\bmu}}_{\mc{O}_X}(3,X,p)) = (9-1)(1-1) + \frac{1}{2}(9-3) = 3.\]


\subsection{Parabolic semistability and usual vector bundle semistability}\label{sec:StabilityParabVB}  
We now discuss the connection between usual semistability and parabolic $\bmu$-semistability. Note that in general, the usual slope semistability of a vector bundle (i.e. if $d/r \geq d'/r'$, for $d',r'$ the degree and rank of any proper subvector bundle) does not imply \emph{parabolic slope semistablity} or vice-versa (see \cite{NFV} for when a bundle is $\bmu$-semistable but not semistable as a vector bundle).  However, in our set-up as we show below parabolic semi-stability does imply the usual slope semistability, (but not vice-versa).  Therefore, the only semistable parabolic bundles in our set-up are those stated in Theorem \ref{Atiyah:semistablebundles}.


\begin{prop} \label{prop-psemistableimpliessemistable}
Let $(E,\Eb)$ be a $\bmu$-semistable bundle of rank $3$, trivial determinant with parabolic structure given at a unique point $p$. Then $E$ is semistable as a vector bundle. 
\end{prop}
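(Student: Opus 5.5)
We argue by contraposition: if $E$ is not semistable, we produce a subbundle $F\subset E$ with $\mathrm{pardeg}(F,F_\bullet)>0$, contradicting $\bmu$-semistability in the form of Remark \ref{lemma-parsemistability}. Two facts drive the argument. Since $\deg E=0$, instability of $E$ yields a subbundle $F$ of rank $r'\in\{1,2\}$ with $\deg F\ge 1$; degrees are integers, so a positive slope forces degree at least $1$. And the normalization $\mu_3\le\mu_2\le\mu_1$, $\sum\mu_i=0$, $\mu_1-\mu_3<1$ bounds how negative the parabolic contribution to $F$ can be.

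\emph{Step 1: choose $F$.} Take $F$ to be the maximal destabilizing subbundle, i.e. the first term of the Harder--Narasimhan filtration; any subbundle of positive degree will do. Then $\deg F\ge 1$ and $\rk F\in\{1,2\}$.

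\emph{Step 2: bound the induced weights.} By Definition \ref{defi: parabolic subbundle}, the induced weights on $F$ are $\mu_j$ for $j\in J$, where $J\subseteq\{1,2,3\}$ has $|J|=\rk F$. Hence $\mathrm{pardeg}(F)=\deg F+\sum_{j\in J}\mu_j$, and in the worst case $J$ consists of the smallest weights.
\begin{itemize}
\item If $\rk F=1$, the sum is at least $\mu_3$. Since $\mu_3=-(\mu_1+\mu_2)$ and $\mu_3\le\mu_2\le\mu_1$, we get $\mu_3 = \frac{1}{3}\bigl((\mu_3-\mu_1)+(\mu_3-\mu_2)\bigr)\ge\frac{2}{3}(\mu_3-\mu_1)>-\frac{2}{3}$.
\item If $\rk F=2$, the sum is at least $\mu_2+\mu_3=-\mu_1$. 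Here $\mu_1=\frac{1}{3}\bigl((\mu_1-\mu_2)+(\mu_1-\mu_3)\bigr)<\frac{2}{3}$, so $-\mu_1>-\frac{2}{3}$.
\end{itemize}
In both cases $\mathrm{pardeg}(F)>1-\frac{2}{3}>0$. This contradicts semistability, which requires $\mathrm{pardeg}(F)\le 0$.

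\emph{Step 3: conclude.} Therefore every subbundle of $E$ has degree $\le 0$, so $E$ is semistable. By Theorem \ref{Atiyah:semistablebundles}, $E$ is then one of the listed types.

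The only delicate point is Step 2: getting the inequality $|\sum_{j\in J}\mu_j|<1$ for $|J|\in\{1,2\}$ from the constraints $\mu_1-\mu_3<1$ and $\sum_i\mu_i=0$. Note that without the normalization $\sum_i\mu_i=0$ the bound would have to be stated for the parabolic index, but Remark \ref{lemma-parsemistability} makes that normalization harmless.
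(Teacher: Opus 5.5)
Your proof is correct and follows essentially the same route as the paper: take a subbundle $F$ of degree $\ge 1$ and show that $\mathrm{pardeg}(F)=\deg F+\sum_{j\in J}\mu_j>0$ using the constraints on the weights. Your bounds $\mu_3>-\tfrac23$ and $\mu_1<\tfrac23$ are the sharp ones; the paper's asserted $|\mu_i|<\tfrac12$ fails near the corners $(\tfrac23,-\tfrac13,-\tfrac13)$ and $(\tfrac13,\tfrac13,-\tfrac23)$ of the weight triangle, and working with $\mathrm{pardeg}$ rather than slopes avoids the paper's slip in the rank-$2$ parabolic slope formula.
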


\begin{proof} Suppose there exists a subbundle $F \subseteq E$ such that $\mu(F) > \mu(E)$. Thus, 
    $\deg(F)/\rk(F) \geq 1.$
Since we are assuming $(E,\Eb)$ to be $\bmu$-semistable, by Remark \ref{lemma-parsemistability} for any $E' \subseteq E$, $\mu_{par}(E') \leq0$. Moreover, given the assumption over the parabolic weights, as remarked before, $|\mu_i| < 1/2$ for all $i=1,2,3$. Hence, if $\rk(F)=1$,
\[ \mu_{par}(F) = \deg(F) + \mu_i \geq 1 + \mu_i > 0, \]
 for some $i \in \{1,2,3\}$. If  $\rk(F)=2$, 
\[ \mu_{par}(F) =  \frac{\deg(F)}{\rk(F)}+ \mu_i + \mu_j \geq 1 + \mu_i+ \mu_j = 1 - \mu_k > 0, \]
where $\{i,j,k\} = \{1,2,3\}$. In both cases we have a contradiction, and the proposition follows. 
\end{proof}


\begin{cor} \label{prop-subbundlesofged0}
Let $(E, \Eb)$ be a rank $3$ parabolic vector bundle with trivial determinant bundle. Let $(E', E'_{\bullet})$ be a parabolic subbundle of $(E, \Eb)$. If $\deg E' < 0$, then $E'$ does not parabolic destabilize $(E,\Eb)$, regardless of the flag $E'_{\bullet}$. In particular, to check parabolic semistability of $E$, we only have to consider subbundles of degree $0$.  
\end{cor}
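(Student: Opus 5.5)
The plan is to compute the parabolic degree of a negative-degree subbundle directly, showing it is automatically negative, and then read off the reduction to degree $0$ subbundles from Remark \ref{lemma-parsemistability}. The weight bounds give $|\mu_i|<1/2$, as in the proof of Proposition \ref{prop-psemistableimpliessemistable}; we will also use the sharper inequalities that follow from $\mu_3\le\mu_2\le\mu_1$, $\mu_1-\mu_3<1$ and $\sum\mu_i=0$.

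By Remark \ref{lemma-parsemistability}, and because $\sum_i\mu_i=0$ and $\deg E=0$, the bundle $E'$ destabilizes $(E,\Eb)$ exactly when $\mathrm{pardeg}(E',E'_\bullet)>0$ (respectively $\ge 0$ for the semistable statement). Suppose $\deg E'\le -1$. We split by rank.

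\emph{Rank $1$.} The induced weight is a single $\mu_j$, and $\mu_j\le\mu_1$. Since $\mu_1-\mu_3<1$ and $\mu_3\le\mu_2$, we get $3\mu_1<1+2\mu_1+\mu_3\le 1+\mu_1+\mu_2+\mu_3=1$, so $\mu_1<1/3$. Hence
\[\mathrm{pardeg}(E')\le -1+\tfrac13<0.\]

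\emph{Rank $2$.} The induced weights are two distinct $\mu_i,\mu_j$, so
\[\mu_i+\mu_j=-\mu_k\le-\mu_3=\mu_1+\mu_2.\]
Also $\mu_1+\mu_2=-\mu_3<1-\mu_1\le 1$, and more precisely $-\mu_3<2/3$ by the symmetric argument to the rank $1$ case. Hence
\[\mathrm{pardeg}(E')\le -1+\tfrac23<0.\]

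In both cases the inequality is strict, so $E'$ cannot destabilize $(E,\Eb)$ even in the semistable sense, whatever flag it carries. The "in particular" then follows: a subbundle of positive degree would contradict Proposition \ref{prop-psemistableimpliessemistable}, or can be handled directly by reading that proof as showing such subbundles always destabilize. So for semistable $E$ only degree $0$ subbundles remain to be checked.

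There is no real obstacle. The only subtlety is keeping the normalization $\mathrm{pardeg}$ versus $\mu_{\mathrm{par}}$ consistent, and noting that the weights induced on $E'$ are distinct entries $\mu_j$, so their sum is bounded by the sum of the largest $\mathrm{rk}(E')$ weights.
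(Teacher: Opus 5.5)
Your strategy is the paper's: bound the weights, show directly that $\mathrm{pardeg}(E',E'_\bullet)<0$ whenever $\deg E'\le -1$, and handle positive-degree subbundles through Proposition \ref{prop-psemistableimpliessemistable}. Your rank $2$ case is correct as written; you already have $-\mu_3<1-\mu_1\le 1$, which is all that is needed.

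The rank $1$ case, however, rests on a false inequality. The step $1+2\mu_1+\mu_3\le 1+\mu_1+\mu_2+\mu_3$ is equivalent to $\mu_1\le\mu_2$, which is the reverse of the ordering $\mu_2\le\mu_1$. The resulting bound $\mu_1<1/3$ is false. For instance, $\bmu=(0.6,-0.3,-0.3)$ is admissible ($\mu_3\le\mu_2\le\mu_1$, $\mu_1-\mu_3=0.9<1$, $\sum\mu_i=0$) and has $\mu_1=0.6$.

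The same example shows that the bound $|\mu_i|<1/2$ quoted in your first paragraph also fails. The paper's own one-line proof asserts this bound too. The vertices $(\frac23,-\frac13,-\frac13)$ and $(\frac13,\frac13,-\frac23)$ of the weight triangle show that the sharp bounds are $\mu_1<\frac23$ and $-\mu_3<\frac23$. Your claim $-\mu_3<\frac23$ is correct. But it does not follow from a ``symmetric'' version of your rank $1$ computation: under $\bmu\mapsto(-\mu_3,-\mu_2,-\mu_1)$ that computation would give the equally false $-\mu_3<\frac13$.

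The repair is immediate and mirrors your rank $2$ argument. Since $\mu_3$ is the smallest of three reals with zero sum, $\mu_3\le 0$. Hence $\mu_1<1+\mu_3\le 1$, and
\[\mathrm{pardeg}(E',E'_\bullet)\le -1+\mu_1<\mu_3\le 0.\]
Alternatively, use the correct bound $\mu_1<\frac23$, obtained from $0=\mu_1+\mu_2+\mu_3\ge \mu_1+2\mu_3>3\mu_1-2$. With this replacement your argument is complete. It then coincides with the paper's, but rests on weight bounds that are actually true.
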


\begin{proof}
    Let $(E,p)$ be $\bmu$-semistable.  By Proposition \ref{prop-psemistableimpliessemistable} $\deg E' \leq 0$ for all subbundle $E' \subseteq E.$ Suppose $\deg E' < 0$, since $|\mu_i| < 1/2$, then $\mu_{par}(E',p)<0.$ Therefore, $E'$ does not parabolic destabilize $E$. 
\end{proof}


\begin{prop} \label{prop-pslopesopfsubbundles}
    Let $(E,\Eb)$ be as Definition \ref{nota-parbundle}. Let $L \subset E$ (resp. $F \subset E$ ) runs over the rank $1$ (resp. rank $2$) subbundles of $E$ of degree $0$. Then the following holds. 
\begin{enumerate}
    \item If $L_p \not\subseteq E_{p,2}$ and $E_{p,1} \not\subseteq F_p$, then $(E,\Eb)$ is stable for any choice of $\mu_2 \neq 0$. 

    \item If $L_p \not\subseteq E_{p,2}$ and $E_{p,1} \subseteq F_p \neq E_{p,2}$, then $(E,\Eb)$ is stable if, and only if, $\mu_2 >0$. Moreover, it is strictly semistable if, and only if, $\mu_2=0$. 

    \item If $L_p \neq E_{p,1}$, $L_p \subseteq E_{p,2}$ and $E_{p,1} \not\subseteq F_p$, then $(E,\Eb)$ is stable if, and only if, $\mu_2 < 0$. Moreover, it is strictly semistable if, and only if, $\mu_2 =0$. 

    \item If $L_p \neq E_{p,1}$, $L_p \subseteq E_{p,2}$ and $E_{p,1} \subseteq F_p \neq E_{p,2}$, then $(E,\Eb)$ is strictly semistable if, and only if, $\mu_2 =0$. Moreover, it is unstable for $\mu_2 \neq 0$. 

    \item If $L_p = E_{p,1}$ or $F_p = E_{p,2}$, then $(E,\Eb)$ is unstable for every $(\mu_1, \mu_2, \mu_3) \neq (0,0,0)$. 
\end{enumerate} 
\end{prop}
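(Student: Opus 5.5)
By Corollary \ref{prop-subbundlesofged0} and Proposition \ref{prop-psemistableimpliessemistable}, only degree $0$ subbundles $L$ (rank $1$) and $F$ (rank $2$) need to be tested. By Remark \ref{lemma-parsemistability}, $(E,\Eb)$ is stable (resp. semistable) if and only if $\mathrm{pardeg}<0$ (resp. $\le 0$) for each of them. So the plan is to list the parabolic degree of every degree $0$ subbundle according to how its fibre at $p$ sits with respect to the flag $E_{p,1}\subset E_{p,2}$. I then combine these values with the weight conditions $\mu_3\le\mu_2\le\mu_1$, $\sum\mu_i=0$, $\mu_1-\mu_3<1$.

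For a line subbundle $L$ of degree $0$, Definition \ref{defi: parabolic subbundle} gives
\[
\mathrm{pardeg}(L)=\mu_1 \text{ if } L_p=E_{p,1},\qquad \mu_2 \text{ if } L_p\subseteq E_{p,2},\ L_p\ne E_{p,1},\qquad \mu_3 \text{ if } L_p\not\subseteq E_{p,2}.
\]
For a rank $2$ subbundle $F$ of degree $0$, the induced weights are $\{\mu_1,\mu_2\}$ if $F_p=E_{p,2}$, $\{\mu_1,\mu_3\}$ if $E_{p,1}\subseteq F_p\ne E_{p,2}$, and $\{\mu_2,\mu_3\}$ if $E_{p,1}\not\subseteq F_p$. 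Using $\sum\mu_i=0$, this gives
\[
\mathrm{pardeg}(F)=-\mu_3,\qquad -\mu_2,\qquad -\mu_1
\]
in these three cases respectively.

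From the ordering and $\sum\mu_i=0$ with $\bmu\neq 0$, we get $\mu_1>0>\mu_3$. Hence the generic positions ($L_p\not\subseteq E_{p,2}$ and $E_{p,1}\not\subseteq F_p$) always contribute the negative values $\mu_3$ and $-\mu_1$. The special positions contribute $\mu_1>0$ or $-\mu_3>0$, which are always destabilizing; this gives item (5). The middle positions contribute $\mu_2$ for a line and $-\mu_2$ for a plane. Reading off the signs gives the remaining items:
\begin{itemize}
\item[(1)] no middle-position subbundle occurs, so $(E,\Eb)$ is stable whenever $\bmu\ne 0$. The hypothesis "$\mu_2\neq0$" in the statement should be read as "nonzero weights".
\item[(2)] only a middle-position $F$ occurs, so stability is equivalent to $-\mu_2<0$, i.e. $\mu_2>0$, and strict semistability is equivalent to $\mu_2=0$.
\item[(3)] symmetrically, only a middle-position $L$ occurs, so stability is equivalent to $\mu_2<0$.
\item[(4)] both $\mu_2$ and $-\mu_2$ occur, so $(E,\Eb)$ is semistable only when $\mu_2=0$, and then strictly semistable.
\end{itemize}

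The main subtlety is that the hypotheses quantify over all degree $0$ subbundles, and several may exist: for types (2.2) and (3.x) there are positive-dimensional families by Proposition \ref{prop:SubbundlesOfDegree0}. I would handle this by noting that the conditions in each item are stated for every $L$ and every $F$. The sign analysis is then uniform: each case is characterized by which positions are realized by at least one subbundle. I would make this interpretation explicit, and also note that if $E$ has no degree $0$ subbundle of one rank, the corresponding condition is vacuous. With that reading, the argument reduces to the bookkeeping above.
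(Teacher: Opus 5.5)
Your proposal is correct and follows the paper's proof. Both arguments tabulate the parabolic degree of each degree-$0$ subbundle according to the position of its fibre relative to $E_{p,1}\subset E_{p,2}$, and conclude via Remark~\ref{lemma-parsemistability} and Corollary~\ref{prop-subbundlesofged0}; the paper's $\mu_1+\mu_2,\ \mu_1+\mu_3,\ \mu_2+\mu_3$ are your $-\mu_3,-\mu_2,-\mu_1$, and your sign bookkeeping with $\mu_1>0>\mu_3$ and your ``realized positions'' reading of the hypotheses only make explicit what the paper calls putting the calculations together. One caveat, which you share with the paper: the reduction to degree-$0$ subbundles in the ``stable'' directions tacitly needs $E$ itself to be semistable, since a positive-degree subbundle destabilizes for all weights; this is harmless because the proposition is only ever applied to semistable $E$.
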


\begin{proof}
 In what follows, let $L, F \subseteq E$, where $L$ (resp. $F$) runs over the rank $1$ (resp. rank $2$) subbundles of $E$ of degree $0$. 

    If $L_p = E_{p,1}$, then the induced flag at $L$ is  $L_{\bullet}: 0 \subseteq L_p = L_p \cap E_{p,1}$ and thus
    $\mathrm{pardeg}(L,L_{\bullet}) = \mu_1$. 
    If $L_p \neq E_{p,1}$ and $L_p \subseteq E_{p,2}$, then the induced flag at $L$ is $L_{\bullet}: 0 \subseteq L_p = L_p \cap E_{p,2}$  and thus $\mathrm{pardeg}(L,L_{\bullet}) = \mu_2$. 
    If $L_p \not\subseteq E_{p,2}$, then the induced flag at $L$ is  $L_{\bullet}: 0 \subseteq L_p = L_p \cap E_{p,3}$ and thus
    $\mathrm{pardeg}(L, L_{\bullet}) = \mu_3$. 

    If $E_{p,1} \subseteq F_p$ and $F_p = E_{p,2}$, then the induced flag at $F$ is $F_{\bullet}: 0 \subseteq E_{p,1} \subseteq F_p = F_p \cap E_{p,2}$ and thus
    $\mathrm{pardeg}(F,F_{\bullet}) = \mu_1 + \mu_2$. 
    If $E_{p,1} \subseteq F_p$ and $F_p \neq E_{p,2}$, then the induced flag at $F$ is $F_{\bullet}: 0 \subseteq E_{p,1} \subseteq F_p = F_p \cap E_{p,3}$ and thus
    $\mu_{par}(F,F_{\bullet}) = \mu_1 + \mu_3$. 
    If $F_p \cap E_{p,2}$ is a line different from $E_{p,1}$, then the induced flag at $F$ is $F_{\bullet}: 0 \subseteq F_p \cap E_{p,2} \subseteq F_p = F_p \cap E_{p,3}$ and thus
    $\mu_{par}(F,F_{\bullet}) = \mu_2 + \mu_3$. 

  Therefore, the proof follows from Remark \ref{lemma-parsemistability}, Corollary \ref{prop-subbundlesofged0} and putting all the above calculations together. \end{proof}


\subsection{The Wall Chamber decomposition of the weight chamber}\label{sec:WallChamberDecomposition}

In the following, we recall the wall-chamber decomposition of admissible weights for rank $3$ parabolic vector bundles with trivial determinant and parabolic structure given at a unique point.

By Definition \ref{nota-parbundle}, and as observed in Remark \ref{lemma-parsemistability}, in our situation, the parabolic weights $\bmu = (\mu_1, \mu_2, \mu_3) \in \mathbb{R}^3$ must satisfy the following conditions: 
\begin{itemize}
    \item $\mu_1 \geq  \mu_2 \geq  \mu_3$;
    \item $\mu_1 - \mu_3 < 1$; and
    \item $\sum\limits_{i=1}^{3}\mu_{i} =0.$
\end{itemize}
The above conditions cover the triangle $\triangle \subset [-1,1]^{3}$ with  vertices $(0,0,0)$, $(\frac{2}{3},\frac{-1}{3},\frac{-1}{3})$, $(\frac{1}{3},\frac{1}{3},\frac{-2}{3})$. 

By Proposition \ref{prop-pslopesopfsubbundles}, there are essentially two stability conditions, given by $\mu_{2}>0$ and $\mu_2<0$ which means the region $\triangle$ can be split into two chambers, say $\Pplus$, $\Pmin$ (for $\mu_{2}>0$, $\mu_2<0$ resp.) with the wall being given by $\mu_2=0$.
We represent the space of admissible weights with the wall-chamber decomposition just introduced, in Figure \ref{fig-admissibleweights}. We note that this is also given in \cite [Example $2.9$] {S-T}.

\begin{figure}[h!]
\centering
\begin{tikzpicture}[scale=3]

\coordinate (O) at (0,0);
\coordinate (A) at (-1,1);
\coordinate (B) at (1,1);

\draw[thick] (O) -- (A);
\draw[thick] (O) -- (B);
\draw[thick] (A) -- (B);
\draw[thick, red] (O) -- (0,1);

\node[below]  at (O) {$(0,0,0)$};
\node[left] at (A) {$\left(\frac23,-\frac13,-\frac13\right)$};
\node[right] at (B) {$\left(\frac13,\frac13,-\frac23\right)$};

\node[above] at (0,1) {$\mu_2=0$};
\node[right] at (-0.5,0.7) {$\Pmin$};
\node[left] at (0.5,0.7) {$\Pplus$};

\end{tikzpicture}
\caption{Admissible parabolic weights for trivial determinant, rank $3$ bundles and a unique parabolic point}
\label{fig-admissibleweights}
\end{figure}    

\begin{rem}
When $\bmu\in \Pplus$ or $\bmu\in \Pmin$, every $\bmu$-semistable bundle is $\bmu$-stable, and the set of $\bmu$-stable bundles only depends on the chamber, and not of the specific choice of $\bmu$ inside.
\end{rem}

\subsection{The always stable parabolic bundles and the never stable ones}\label{sec:StableParabAllChambers} 

Since parabolic stability depends on the choice of the flag, a certain choice of flag ensures that the parabolic bundle is always parabolic stable. 

\begin{prop}\label{prop:stableunstable} We have the following conditions for \emph{always stable}  and \emph{never stable} bundles:
\begin{enumerate}
\item A parabolic vector bundle $(E,\Eb)$ is $\bmu$-stable for either of the  chambers $\Pmin$
and $\Pplus$ if, and only if, it satisfies the two conditions:
\begin{enumerate}

\item $L_p \not\subseteq E_{p,2}$ for every line subbundle $L \subseteq E$ of degree $0$; 
\item $E_{p,1} \not\subseteq F_p$
for every rank $2$ subbundle $F \subseteq E$ of degree $0$.
\end{enumerate}
In other words, the parabolic bundles $(E,\Eb)$ satisfying these conditions are always stable, and we  will denote the set of such parabolic bundles by $\Ugen$. 
\item Vector bundles of types (2.2), (3.2) and (3.3) do not admit
    $\bmu$-semistable parabolic structure: they are $\bmu$-unstable
    for the two chambers $\Pmin$ and $\Pplus$.
    \end{enumerate}
\end{prop}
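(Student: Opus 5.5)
Both parts reduce to Proposition \ref{prop-pslopesopfsubbundles}, combined with the list of degree $0$ subbundles in Proposition \ref{prop:SubbundlesOfDegree0}. By Proposition \ref{prop-psemistableimpliessemistable} and Corollary \ref{prop-subbundlesofged0}, a $\bmu$-semistable parabolic structure forces $E$ to be semistable, and only degree $0$ subbundles can destabilize. So the whole question is how the flag $E_{p,1}\subset E_{p,2}$ sits relative to the red points and blue lines of Figure \ref{fig:degree0bundle}.

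For (1), the direction ``conditions imply stable in both chambers'' is case (1) of Proposition \ref{prop-pslopesopfsubbundles}. A degree $0$ line subbundle $L$ with $L_p\not\subseteq E_{p,2}$ has parabolic degree $\mu_3<0$. A rank $2$ degree $0$ subbundle $F$ with $E_{p,1}\not\subseteq F_p$ has parabolic degree $\mu_2+\mu_3=-\mu_1<0$. This holds for every $\bmu$ with $\mu_2\neq 0$ in the triangle, so it holds in both $\Pmin$ and $\Pplus$.

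For the converse, suppose $(E,\Eb)$ is stable for $\bmu^+\in\Pplus$ and for $\bmu^-\in\Pmin$. If some $L$ had $L_p\subseteq E_{p,2}$, its parabolic degree would be $\mu_1\geq 0$ if $L_p=E_{p,1}$, or $\mu_2$ otherwise; $\mu_2>0$ contradicts stability for $\bmu^+$. Dually, if some $F$ had $E_{p,1}\subseteq F_p$, its parabolic degree would be $\mu_1+\mu_2=-\mu_3>0$ if $F_p=E_{p,2}$, or $\mu_1+\mu_3=-\mu_2$ otherwise; $-\mu_2>0$ contradicts stability for $\bmu^-$. This is exactly the case analysis of Proposition \ref{prop-pslopesopfsubbundles}, read backwards.

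For (2), I would go type by type using Proposition \ref{prop:SubbundlesOfDegree0}. The key geometric fact is that in types (2.2), (3.2) and (3.3) the degree $0$ line subbundles restricted to $p$ fill a whole line of $\PP(E_p)$, or all of $\PP(E_p)$, and the degree $0$ rank $2$ subbundles fill a whole pencil. Any line in $\PP^2$ meets $\PP(E_{p,2})$, and any pencil of lines contains a member through the point $E_{p,1}$.

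Precisely:
\begin{itemize}
\item Type (3.3): every line and every plane in $E_p$ comes from a degree $0$ subbundle. Taking $L_p=E_{p,1}$ destabilizes by case (5) of Proposition \ref{prop-pslopesopfsubbundles}.
\item Type (2.2): the $L\cong L$ family restricts to all points of the line $\PP((L\oplus L)_p)$, which meets $\PP(E_{p,2})$ in some point. So some $L$ has $L_p\subseteq E_{p,2}$, ruling out $\Pplus$. The rank $2$ family $L^{-2}\oplus L$, together with $L\oplus L$, gives all planes through the point $(L^{-2})_p$. Either $E_{p,1}=(L^{-2})_p$, which is case (5), or the plane spanned by $E_{p,1}$ and $(L^{-2})_p$ is some $F_p$ containing $E_{p,1}$, ruling out $\Pmin$.
\item Type (3.2): the same argument works, since the line subbundles fill the line $\PP((L\oplus L_0)_p)$ and the rank $2$ subbundles form the pencil of planes through $(L_0)_p$.
\end{itemize}

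In each case, whenever the relevant coincidence is an equality ($L_p=E_{p,1}$ or $F_p=E_{p,2}$), case (5) already gives instability in both chambers. Otherwise we land in case (4) of Proposition \ref{prop-pslopesopfsubbundles}, or in case (2) or (3) with the opposite chamber excluded by the other subbundle, so the bundle is unstable whenever $\mu_2\neq 0$.

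The main obstacle I expect is verifying that these families really sweep out a full line of points and a full pencil of planes in $E_p$. For (3.2) this requires the description of the $\infty^1$ families of $L$ and $\mc{E}_2\otimes L$ from Proposition \ref{prop:SubbundlesOfDegree0}, and I would check it via the automorphism group $G$ of Proposition \ref{prop:endomorphisms}.
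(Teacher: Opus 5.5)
Your proposal is correct and follows the paper's own route: part (1) is read off from Proposition \ref{prop-pslopesopfsubbundles}. Part (2) uses Proposition \ref{prop:SubbundlesOfDegree0} on the fiber at $p$ to produce both an $L$ with $L_p\subseteq E_{p,2}$ (excluding $\Pplus$) and an $F$ with $E_{p,1}\subseteq F_p$ (excluding $\Pmin$). You work this out type by type, where the paper only sketches type (2.2).

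The one slip is harmless. In type (2.2) the plane $(L\oplus L)_p$ does not pass through $(L^{-2})_p$, but the family $L^{-2}\oplus L'$ alone already fills the whole pencil of planes through that point.
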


\begin{proof} Part 1 easily follows from  Proposition \ref{prop-pslopesopfsubbundles}.

\noindent For part $2$, one easily checks, by using the description of subbundles of degree $0$ in Proposition \ref{prop:SubbundlesOfDegree0},
    that for each vector bundle $E$ of the statement, and each 
    parabolic structure $\Eb$, we can find subbundles
    $L,F\subset E$ of degree $0$ such that:
    \begin{enumerate}
\item $L_p \subset E_{p,2}$; or
\item $F_p \supset E_{p,1}$.
\end{enumerate}
For instance, in the case $E$ has type (2.2), then we see from 
Proposition \ref{prop:SubbundlesOfDegree0} that there are $1$-parameter families of rank $1$ and rank $2$ subbundles;
reasoning in restriction to a fiber by means of 
Corollary \ref{cor:injectiverestriction}, one easily check that we can find $L$ and $F$ as above.
\end{proof}

\subsection{Semistable parabolic bundles corresponding to each chamber}\label{sec:StableParabSingleChamber}

We now describe the parabolic bundles $(E,\Eb)$ that are $\bmu$-stable only with respect to the weights $\bmu$ in one of the chambers $\Pplus$ and $\Pmin$.

\begin{thm}\label{thm:decompositionUgensigma} \label{thm:classifying stab parabolic bundles} 
Let $\Mmin$ (resp. $\Mplus$) be the moduli space of $\bmu$-semistable bundles with trivial determinant for ${\bmu} \in \Pmin$ (resp. ${\bmu} \in \Pplus$). Let $\Ugen$ be as in Proposition \ref{prop:stableunstable}. Then 
 
\[ \Mmin = \Smin \cup \Ugen \quad \text{and} \quad  \Mplus = \Splus \cup \Ugen,\]
where: 
\[ \Smin := \{ (E, \Eb) \;|\; E_{p,1} \not\subseteq L_{p,1} \subseteq E_{p,2} \text{ and } E_{p,1} \nsubseteq F_{p,1} \} \] 
and 
\[ \Splus := \{ (E, \Eb) \;|\;  L_{p,1} \nsubseteq E_{p,2} \text{ and } E_{p,1} \subseteq F_{p,2} \neq E_2 \}, \] 
$L \subset E$ (resp. $F \subset E$ ) runs over the rank $1$ (resp. rank $2$) subbundles of $E$ of degree $0$ and $E$ is one of the  three types of vector bundles: (1), (2.1), (3.1).
\end{thm}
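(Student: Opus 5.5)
The plan is to bypass the case list of Proposition \ref{prop-pslopesopfsubbundles}, whose hypotheses quantify over several degree $0$ subbundles at once, and to redo its pardeg computation chamber by chamber, for \emph{all} degree $0$ subbundles simultaneously. First I reduce to degree $0$ subbundles of semistable bundles of the three good types. By Proposition \ref{prop-psemistableimpliessemistable}, any $\bmu$-semistable $(E,\Eb)$ has $E$ semistable, so $E$ is one of the six types of (\ref{eq:typesofssvb}). Proposition \ref{prop:stableunstable}(2) then excludes types (2.2), (3.2) and (3.3). By Corollary \ref{prop-subbundlesofged0} and Remark \ref{lemma-parsemistability}, stability is the condition $\mathrm{pardeg}(L,L_\bullet)<0$ and $\mathrm{pardeg}(F,F_\bullet)<0$ for every degree $0$ line subbundle $L$ and every degree $0$ rank $2$ subbundle $F$. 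Since $\bmu$ lies in an open chamber, semistability coincides with stability. So $M^\pm$ is exactly the set of $\bmu$-stable parabolic structures on bundles of types (1), (2.1), (3.1).

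Next I tabulate the parabolic degrees, as in the proof of Proposition \ref{prop-pslopesopfsubbundles}, using $\sum\mu_i=0$ and $\mu_1>0>\mu_3$ (the latter holds inside either chamber). For $L$, the value is $\mu_1>0$ if $L_p=E_{p,1}$, $\mu_2$ if $E_{p,1}\neq L_p\subseteq E_{p,2}$, and $\mu_3<0$ otherwise. For $F$, the value is $\mu_1+\mu_2=-\mu_3>0$ if $F_p=E_{p,2}$, $\mu_1+\mu_3=-\mu_2$ if $E_{p,1}\subseteq F_p\neq E_{p,2}$, and $\mu_2+\mu_3=-\mu_1<0$ otherwise.

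For $\mu_2<0$ this gives the following criterion: $(E,\Eb)$ is stable if and only if no degree $0$ line subbundle $L$ has $L_p=E_{p,1}$, and no degree $0$ rank $2$ subbundle $F$ has $F_p\supseteq E_{p,1}$; the case $F_p=E_{p,2}$ is included in the second condition. Line subbundles with $L_p\subseteq E_{p,2}$, $L_p\neq E_{p,1}$ are harmless here. I then split $M^-$ according to whether such an $L$ exists. If none exists, conditions (a) and (b) of Proposition \ref{prop:stableunstable}(1) hold, so the point lies in $\Ugen$. If one exists, the point lies in $\Smin$, read as: some $L$ has $E_{p,1}\not\subseteq L_p\subseteq E_{p,2}$, and every $F$ has $E_{p,1}\not\subseteq F_p$. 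Conversely, points of $\Ugen$ and of $\Smin$ satisfy the criterion.

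The case $\mu_2>0$ is symmetric, or can be obtained by dualization, which exchanges the roles of $L$ and $F$ and of the two chambers. Stability then means that no $L$ has $L_p\subseteq E_{p,2}$ and no $F$ has $F_p=E_{p,2}$. Degree $0$ subbundles $F$ with $E_{p,1}\subseteq F_p\neq E_{p,2}$ are harmless, and their existence is exactly what distinguishes $\Splus$ from $\Ugen$.

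The main obstacle is not the computation but the bookkeeping of quantifiers in the definitions of $\Sigma^\pm$, which I read existentially in $L$ (resp. $F$) and universally in the other. A second point to check is that $\Ugen$ and $\Sigma^\pm$ are disjoint and that the union is exactly $M^\pm$. The types restriction also needs care: Proposition \ref{prop:stableunstable}(2) is only sketched, so I would verify it with the subbundle lists of Proposition \ref{prop:SubbundlesOfDegree0} and Corollary \ref{cor:injectiverestriction}, working in $\PP(E_p)$ for each excluded type.
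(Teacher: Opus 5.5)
Your proposal is correct and follows essentially the paper's argument. The paper makes the same reductions (Proposition \ref{prop-psemistableimpliessemistable}, Atiyah's list, Proposition \ref{prop:stableunstable}(2), Corollary \ref{prop-subbundlesofged0}) and then simply invokes Proposition \ref{prop-pslopesopfsubbundles}; you instead recompute that parabolic-degree table chamber by chamber over all degree $0$ subbundles at once, which handles the quantifiers more cleanly than the paper does.

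The one step you assert without justification is the inclusion $\Smin\subseteq\Mmin$ (resp.\ $\Splus\subseteq\Mplus$) under your existential reading. It needs two facts, both immediate from Proposition \ref{prop:SubbundlesOfDegree0} for types (1), (2.1), (3.1):
\begin{enumerate}
\item every degree $0$ line subbundle lies in a degree $0$ rank $2$ subbundle, so the condition $E_{p,1}\not\subseteq F_p$ for all $F$ rules out $L_p=E_{p,1}$;
\item dually, every degree $0$ rank $2$ subbundle contains a degree $0$ line subbundle, so the condition $L_p\not\subseteq E_{p,2}$ for all $L$ rules out $F_p=E_{p,2}$.
\end{enumerate}
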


\begin{proof} Let $(E,\Eb)$ as Notation \ref{nota-parbundle}. Since  $(E,\Eb)$ is $\bmu$-semistable, Proposition  \ref{prop-psemistableimpliessemistable} implies $E$ be semistable. Thus Atiyah's classification of vector bundles over elliptic curves, i.e.\ Theorem \ref{Atiyah:semistablebundles}, together with Proposition \ref{prop:stableunstable} yields that $E$ must be as stated. Next, follows from Proposition \ref{prop-subbundlesofged0} that to check parabolic semistability of $(E,\Eb)$, we just need to consider subbundles of degree $0$. The theorem thus follows from Proposition \ref{prop-pslopesopfsubbundles}.  
\end{proof}

\section{Geometry of the moduli space of rank 3 stable parabolic bundles with trivial determinant }

 In this section we describe the geometry of the moduli spaces $\Bun^{\bmu}_{\mc{O}_X}(3,X,p) = M^{\pm}$. We show that $M^{\pm}$ is a $\PP^1$-bundle over $\Bun_{\mc{O}_X}(3,X) = \PP^2$ and $\check{\PP}$. We construct a local universal family of parabolic bundles and show that there exists an isomorphism between the two compactifying loci.

\subsection{The moduli space of semistable parabolic bundles is a ruled surface}

We have a natural proper morphism 
$$\Pi^{\bmu} : \Bun^{\bmu}_{\mc{O}_X}(3,X,p)\to \Bun_{\mc{O}_X}(3,X)$$
induced by Tu's modular map, and show that all fibers are isomorphic to $\PP^1$. Then a classical result of Fischer-Grauert \cite{Fischer-Grauert-65}, the morphism $\Pi$ makes our moduli space into a locally trivial holomorphic $\PP^1$-bundle.

For any $S$-equivalence class in $\Bun_{\mc{O}_X}(3,X)$,
only one isomorphism class of vector bundle can be equipped by a $\bmu$-semistable parabolic structure, namely a vector bundle $E$ of type (1), (2.1) or (3.1). So the fiber of $\Pi^{-1}([E])$
is given by the moduli space of $\bmu$-stable parabolic structures $\Eb$ on $E$, up to automorphism of $E$.
We now discuss all possible cases.

\begin{prop}\label{prop:P1fiber(1)}\it
    Let $E$ be a vector bundle of respective types (1), (2.1) and (3.1). 
    Then the moduli space $\Pi^{-1}([E])$ of $\bmu$-stable parabolic structures $\Eb$ on $E$,
    up to automorphism, is $\PP^1$, 
    for either ${\bmu} \in \Pmin$, or ${\bmu} \in \Pplus$. Moreover, this fiber $\Pi^{-1}([E])\cong\PP^1$ intersects
    $\Smin$ (resp. $\Splus$) in exactly $3$, $2$ and $1$ points, respectively.
\end{prop}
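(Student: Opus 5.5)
Fix $E$ of type (1), (2.1) or (3.1) and a point $p$. A full flag $\Eb$ is a point of the flag variety $\mathcal F\subset\PP(E_p)\times\check\PP(E_p)$, i.e.\ a pair (point $x=\PP(E_{p,1})$, line $\ell=\PP(E_{p,2})$) with $x\in\ell$. By Corollary \ref{cor:injectiverestriction} and Proposition \ref{prop:SubbundlesOfDegree0}, the degree-$0$ subbundles of $E$ give a finite configuration in $\PP(E_p)$ (Figure \ref{fig:degree0bundle}). For type (1) it is the three vertices $P_i=\PP(L_i|_p)$ of a triangle together with the three opposite sides. For (2.1) it is two points $P_1=\PP(L^{-2}|_p)$ and $P_2=\PP(L|_p)$, the line $\PP((\mc E_2\otimes L)|_p)$ through $P_2$ but not $P_1$, and the line $P_1P_2$. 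For (3.1) it is one point $P$ lying on one line $D=\PP((\mc E_2\otimes L)|_p)$. By Proposition \ref{prop:endomorphisms} and its corollary, $\Aut(E)$ acts on $\PP(E_p)$ through $G\cap\mathrm{GL}$ modulo scalars. This group is a $2$-dimensional torus in case (1), the group $\{(a_1,a_2,b)\}$ in case (2.1), and the unipotent-type group $\{(a,b,c)\}$ in case (3.1). In each case it is exactly the connected group preserving the configuration. So the fiber $\Pi^{-1}([E])$ is the quotient of the open set of stable flags in $\mathcal F$ by this $2$-dimensional group.

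Using Theorem \ref{thm:decompositionUgensigma} and Proposition \ref{prop-pslopesopfsubbundles}, a flag $(x,\ell)$ is $\Pmin$-stable exactly when three conditions hold: $x$ lies on none of the configuration lines, $\ell$ contains none of the configuration points other than possibly $x$, and $x$ is not a configuration point. Dually, it is $\Pplus$-stable when $\ell$ contains no configuration point and $x$ lies on at most one configuration line, with $\ell$ not equal to that line. I treat $\Pmin$; the $\Pplus$ case follows by the duality $E\mapsto E^*$, which exchanges points and lines, the two stability chambers, and the groups (for instance, $G$ transposed is conjugate to $G$).

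Case (1). The torus acts simply transitively on the open set of points $x$ off the three sides, so I normalize $x=[1:1:1]$. The stabilizer is trivial. The lines through $x$ form a $\PP^1$, and a line through $x$ is $\Pmin$-stable precisely when it avoids every vertex, except that it is allowed to pass through exactly one vertex. The three lines $xP_i$ contain one vertex each, and those flags are the $\Smin$ points (condition $L_p\subseteq E_{p,2}$). So the fiber is the full $\PP^1$ of lines through $x$, meeting $\Smin$ in $3$ points. One must also check that no line through $x$ contains two vertices; this holds because $x$ is off the sides.

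Case (2.1). The group acts on points off the two lines. Counting dimensions ($2$-dimensional group, $2$-dimensional open set), the action is transitive there, and I will verify that the stabilizer is trivial by a direct matrix computation. Normalizing $x=[1:1:1]$ again, the lines through $x$ give $\PP^1$, and the special lines are $xP_1$ and $xP_2$: $2$ points of $\Smin$.

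Case (3.1). The group $\{\begin{pmatrix}1&b&c\\0&1&b\\0&0&1\end{pmatrix}\}$ acts simply transitively on the affine chart off $D$. The lines through $x$ form a $\PP^1$, and the only special one is $xP$, giving $1$ point. In all three cases the group is connected, so no finite residual identification occurs.

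The main obstacle is making sure the quotient is really $\PP^1$ as a variety. This requires that stable flags always have $x$ in the open orbit (true, since stability forces $x$ off the lines) and that the stabilizer is trivial. I expect the (2.1) stabilizer check to be the delicate computation. For the $\Pplus$ count of $3$, $2$, $1$ points, I would invoke duality rather than recompute.
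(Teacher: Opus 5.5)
Your $\Pmin$ argument is the paper's. In each type, $\Aut(E)/\C^*$ acts simply transitively on the complement of the configuration lines, so you normalize $E_{p,1}=[1:1:1]$. The fiber is then the pencil of lines through that point, and the $\Smin$ points are the $3$, $2$, $1$ lines through the configuration points.

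For $\Pplus$ the paper does not dualize. It runs the dual normalization directly: the same group acts simply transitively on lines avoiding the configuration points, so it fixes $\PP(E_{p,2})=\{Z_1+Z_2=Z_3\}$ and lets $E_{p,1}$ move along it, with $\Splus$ cut out by the configuration lines. Your duality shortcut is legitimate:
\begin{itemize}
\item $(E,E_\bullet)\mapsto(E^*,E^*_\bullet)$, with $E^*_{p,1}=E_{p,2}^\perp$, $E^*_{p,2}=E_{p,1}^\perp$ and weights $(-\mu_3,-\mu_2,-\mu_1)$, flips the sign of $\mu_2$;
\item it matches each degree-$0$ subbundle $F\subset E$ with $F^\perp=(E/F)^*\subset E^*$, which has the same parabolic degree for the dual weights;
\item it preserves types (1), (2.1), (3.1), because $\mc{E}_2^*\cong\mc{E}_2$ and $\mc{E}_3^*\cong\mc{E}_3$.
\end{itemize}
It therefore identifies $(\Pi^+)^{-1}([E])$ and its $\Splus$ points with $(\Pi^-)^{-1}([E^*])$ and its $\Smin$ points. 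The paper's direct route has one advantage: it produces the explicit coordinate $\lambda$ on the $\Pplus$ fiber over the same point $[E]$. That coordinate is reused in Theorem \ref{thm:MminequalMplus} and Section \ref{sec:universalparabolic}.

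Two statements in your write-up are wrong and must be corrected, although your case analysis does not actually use them.

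\textbf{The $\Pmin$ criterion.} As you state it ($\ell$ contains no configuration point other than possibly $x$, and $x$ is not a configuration point), it is exactly the criterion for $\Ugen$. It excludes precisely the flags with $L_p\subseteq E_{p,2}$, $L_p\neq E_{p,1}$. By Proposition \ref{prop-pslopesopfsubbundles}(3) these flags are $\Pmin$-stable, and they constitute $\Smin$. Read literally, your criterion would give $\PP^1$ minus $3$, $2$, $1$ points and contradict your own conclusion. The correct criterion is only that $x$ lies on no configuration line. Since every configuration point lies on one, $x$ is then automatically not such a point. Every line through $x$ is allowed, and it contains at most one configuration point. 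This is what you in fact use in each case.

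\textbf{The symmetry group.} $\Aut(E)/\C^*$ is not the whole connected group preserving the configuration in types (2.1) and (3.1). In the paper's coordinates, $\mathrm{diag}(1,d,1)$ preserves the (2.1) configuration but is not induced by $\Aut(E)$. For (3.1), the stabilizer of a point on a line is a $5$-dimensional Borel subgroup. This remark plays no role in your argument, so simply drop it.
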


\begin{proof}
    Recall that subbundles of degree $0$ and automorphisms are determined by their restriction to the fiber $E_p$: they are represented by points and line in $\PP(E_p)$.
    
    {\bf Case $E$ of type (1)}. In $\PP(E_p)$, we can choose the factors $L_i$ of $E$ to be normalized in homogeneous $[Z_1,Z_2,Z_3]$ coordinates as:
    $$L_1=[1,0,0],\ \ \ L_2=[0,1,0],\ \ \ L_3=[0,0,1];$$
    any other degree $0$ subbundle is one of the lines $\mathcal L_{ij}$ defined by $L_i\oplus L_j$. We denote by $P\in \mathcal L$ the point and line defined by the parabolic structure $E_{p,1}\subset E_{p,2}$.

    Assume ${\bmu} \in \Pmin$ and $\Eb$ be a ${\bmu}$-semistable parabolic structure. Then $E_{p,1}\not\in L_i\oplus L_j$ for any $i,j=1,2,3$, meaning $P\not\in\mathcal L_{ij}$.
    The automorphisms of $E$ take the form 
    $$[Z_1,Z_2,Z_3]\mapsto [a_1 Z_1,a_2 Z_2,a_3 Z_3]$$
    and are acting uniquely transitively on the complement of lines $\mathcal L_{ij}$: we can assume $P=[1,1,1]$ and we have fixed automorphism freedom. Therefore, the parabolic structure is determined by the line $\mathcal L$ 
    passing through $[1,1,1]$. The set of those lines is parametrized by $\PP^1$, and we can check that any such line gives rise to a ${\bmu}$-semistable parabolic structure. We note that the parabolic structure belongs to $\Smin$ if, and only if, $\mathcal L\ni L_i$, $i=1,2,3$, which gives $3$ distinct points in the moduli space.

    Assume ${\bmu} \in \Pplus$ and $\Eb$ be a ${\bmu}$-semistable parabolic structure. Then $\mathcal L\not\ni L_i$ for any $i=1,2,3$.
    The automorphisms group acts uniquely transitively 
    on the lines $\mathcal L$ with this property, 
    and we can assume $\mathcal L:\{Z_1+Z_2=Z_3\}$ and the automorphism freedom is fixed. Then the parabolic structure is given by the position of $P$ along $\mathcal L\cong\PP^1$. The $3$ points $\mathcal L\cap\mathcal L_{ij}$, $i,j=1,2,3$, define the points of $\Splus$ in the fiber.

    In Figure \ref{fig:normal}, we can see the two normalizations used in the proof of Proposition \ref{prop:P1fiber(1)}:
\begin{figure}[H]
    \centering
    \includegraphics[width=0.6\linewidth]{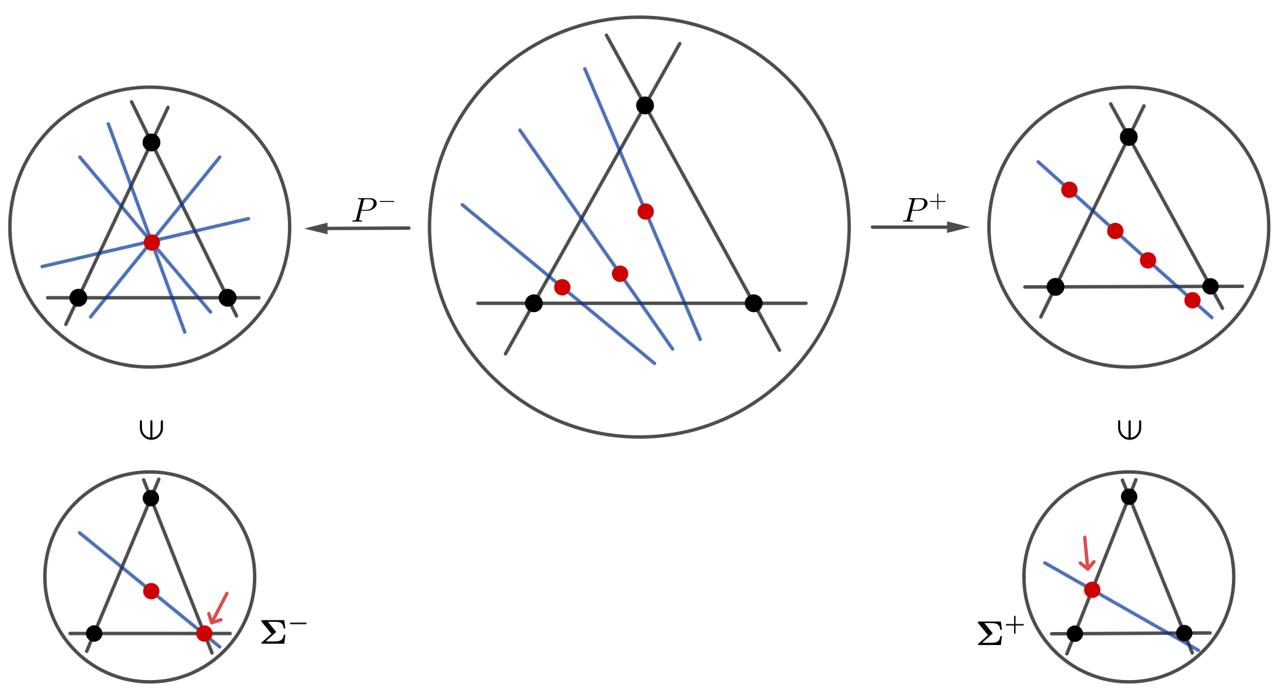}
    \caption{Normalizations of parabolic structures}
    \label{fig:normal}
\end{figure}

    {\bf Case $E$ of type (2.1)}: $E=L^{-2}\oplus(L\otimes\mathcal E_2)$ with $L^3\not=\mathcal O_X$.
     In $\PP(E_p)$, we can choose homogeneous coordinates $[Z_1,Z_2,Z_3]$ so that:
     \begin{itemize}
         \item $L^{-2}$ is the point $P_2=[0:0:1]$,
         \item $L\otimes\mathcal E_2$ is the line $\mathcal L_1:\{Z_3=0\}$,
         \item $L\subset(L\otimes\mathcal E_2)$ is the point $P_1=[1:0:0]$,
         \item $L^{-2}\oplus L$ is the line $\mathcal L_2:\{Z_2=0\}$.
     \end{itemize}
     We denote by $P\in \mathcal L$ the point and line defined by the parabolic structure $E_{p,1}\subset E_{p,2}$.

     Assume ${\bmu} \in \Pmin$ and $\Eb$ be a 
     ${\bmu}$-semistable parabolic structure. Then $P\not\in \mathcal L_1\cup\mathcal L_2$.
    The automorphisms of $E$ take the form 
    $$[Z_1,Z_2,Z_3]\mapsto [a Z_1+bZ_2,a Z_2,c Z_3]$$
    and are acting uniquely transitively on the complement of lines $\mathcal L_1\cup\mathcal L_2$ so that we can assume $P=[1,1,1]$ and 
    conclude, as in the preceding proof, that the set of parabolic structures is given by the set of lines $\mathcal L$ passing through $P$, parametrized by $\PP^1$. Moreover, $\Smin$ intersects
    at the two points corresponding to the lines $\mathcal L$ passing through $P_1$ and $P_2$.

    Assume ${\bmu} \in \Pplus$ and $\Eb$ be a ${\bmu}$-semistable parabolic structure. Then $\mathcal L\not\ni P_1,P_2$.
    The automorphisms group acts uniquely transitively 
    on the lines $\mathcal L$ with this property, 
    and we can assume $\mathcal L:\{Z_1+Z_2=Z_3\}$ and the automorphism freedom is fixed. Then the parabolic structure is given by the position of $P$ along $\mathcal L\cong\PP^1$. The $2$ points $\mathcal L\cap\mathcal L_{i}$, $i=1,2$, define the points of $\Splus$ in the fiber.

    {\bf Case $E$ of type (3.1)}: $E=L\otimes\mathcal E_3$ with $L^3=\mathcal O_X$,
and we get a flag $L\subset L\otimes\mathcal E_2\subset E$.
     In $\PP(E_p)$, we can choose homogeneous coordinates $[Z_1,Z_2,Z_3]$ so that:
     \begin{itemize}
         \item $L$ is the point $P_1=[1:0:0]$,
         \item $L\otimes\mathcal E_2$ is the line $\mathcal L_1:\{Z_3=0\}$.
     \end{itemize}
     We denote by $P\in \mathcal L$ the point and line defined by the parabolic structure $E_{p,1}\subset E_{p,2}$.

     Assume ${\bmu} \in \Pmin$ and $\Eb$ be a 
     ${\bmu}$-semistable parabolic structure. Then $P\not\in \mathcal L_1$.
    The automorphisms of $E$ take the form 
    $$[Z_1,Z_2,Z_3]\mapsto [a Z_1+bZ_2+cZ_3,a Z_2+bZ_3,a Z_3]$$
    and are acting uniquely transitively on the complement of lines $\mathcal L_1$ so that we can assume $P=[1,1,1]$ and 
    conclude, as in the preceding proof, that the set of parabolic structures is given by the set of lines $\mathcal L$ passing through $P$, parametrized by $\PP^1$. Moreover, $\Smin$ intersects
    at the point corresponding to the line $\mathcal L$ passing through $P_1$.

    Assume ${\bmu} \in \Pplus$ and $\Eb$ be a ${\bmu}$-semistable parabolic structure. Then $\mathcal L\not\ni P_1$.
    The automorphisms group acts uniquely transitively 
    on the lines $\mathcal L$ with this property, 
    and we can assume $\mathcal L:\{Z_1+Z_2=Z_3\}$ and the automorphism freedom is fixed. Then the parabolic structure is given by the position of $P$ along $\mathcal L\cong\PP^1$. The point $\mathcal L\cap\mathcal L_1$ define the intersection of $\Splus$ with the fiber.
\end{proof}

\begin{cor}
    The moduli spaces $\Mmin$ and $\Mplus$ have the structure of a locally trivial $\PP^1$-bundle through Tu's modular map $\Pi^\pm:M^{\pm}\to\Bun_{\mc{O}_X}(3,X)\cong\PP^2$. Moreover, 
    the restriction $\Pi^\pm\vert_{\Sigma^\pm}:\Sigma^\pm\to\PP^2$
    is a degree $3$ cover possibly ramifying over the cuspidal sextic $\check{X}$.
\end{cor}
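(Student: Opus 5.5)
We prove the two assertions of the Corollary separately. Both rest on Proposition \ref{prop:P1fiber(1)} combined with the facts already recorded: for every $\bmu$ in $\Pmin$ or $\Pplus$, $M^\pm=\Bun^{\bmu}_{\mc{O}_X}(3,X,p)$ is smooth, projective and of dimension $3$; by Proposition \ref{prop-psemistableimpliessemistable}, Tu's forgetful map $\Pi^\pm:M^\pm\to\PP^2$ is a well-defined morphism between projective varieties, hence proper.

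\textbf{The $\PP^1$-bundle structure.}
\begin{enumerate}
\item By Theorem \ref{thm:decompositionUgensigma}, every point of $M^\pm$ is represented by a bundle of type (1), (2.1) or (3.1). Each $S$-equivalence class contains exactly one such isomorphism class, so the set-theoretic fibre $(\Pi^\pm)^{-1}([E])$ is the space of stable flags on $E$ modulo $\Aut(E)$.
\item Proposition \ref{prop:P1fiber(1)} identifies this space with $\PP^1$.
\item To apply Fischer--Grauert, we need more than set-theoretic fibres: $\Pi^\pm$ must be a proper holomorphic submersion between smooth spaces whose fibres are all biholomorphic to $\PP^1$. Since source and target are smooth and the fibres have the expected dimension $3-2=1$, $\Pi^\pm$ is flat by miracle flatness.
\item It remains to show the scheme-theoretic fibres are reduced. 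I would do this with the local universal family of Proposition \ref{prop:universalparabolicbundle} near a cusp, and the family \eqref{eq:universalfamilydecomposable} elsewhere.
\begin{enumerate}
\item Over a neighbourhood $V\subset\PP^2$, choose a family $E_v$ realising the stable type.
\item Pass to the space of flags $\mathrm{Fl}(E_{v,p})$, with a local section normalising $P=[1,1,1]$ (for $\Pmin$) or $\mathcal L=\{Z_1+Z_2=Z_3\}$ (for $\Pplus$). The Proposition shows this normalisation kills the automorphism freedom uniformly, since $\Aut(E)$ acts uniquely transitively.
\item This gives a local biholomorphism $V\times\PP^1\to (\Pi^\pm)^{-1}(V)$.
\end{enumerate}
This step directly yields local triviality and also shows $\Pi^\pm$ is a submersion. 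On a $6$-fold cover of $V$ one uses the $S_3$-equivariance from \S\ref{sec:universalfamilyTu} to descend.
\end{enumerate}

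\textbf{The cover $\Sigma^\pm\to\PP^2$.} Next, $\Sigma^\pm=M^\pm\setminus\Ugen$ is closed, since $\Ugen$ is open: conditions (a) and (b) of Proposition \ref{prop:stableunstable} are open. Proposition \ref{prop:P1fiber(1)} shows $\Sigma^\pm$ meets every fibre in a finite set of $3$, $2$ or $1$ points according to the type. Hence $\Pi^\pm\vert_{\Sigma^\pm}$ is proper and quasi-finite, so finite.

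Its degree is the generic cardinality, which is $3$, attained over type (1). The drop in cardinality happens exactly over types (2.1) and (3.1), i.e.\ over the dual curve $\check X$, which is the cuspidal sextic described in \S\ref{sec:universalfamilyTu}. Off $\check X$, the three points $\mathcal L\ni L_i$ (resp.\ $\mathcal L\cap\mathcal L_{ij}$) vary holomorphically and remain distinct, giving an étale triple cover. So branching can only occur over $\check X$, which proves the "possibly ramifying" statement.

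\textbf{Main obstacle.} The hard part is the uniform normalisation near the cusps of $\check X$ (bundles of type (3.1)), where the type jumps. I would handle it with the family \eqref{eq:universalfamily}. On $E_{(A,B)}$, the degree $0$ subbundles restricted to the fibre at $p$ are eigen-flags of $B$ that depend continuously on $(b_1,b_2)$. One can then choose a holomorphically varying flag avoiding all of them, which gives the required local section.
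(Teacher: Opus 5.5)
Your skeleton is the paper's. The fibres of $\Pi^\pm$ are $\PP^1$ by Proposition~\ref{prop:P1fiber(1)}, which gives local triviality. The count of $3$, $2$, $1$ points of $\Sigma^\pm$ over $\PP^2\setminus\check X$, over the smooth part of $\check X$, and over its cusps gives the degree $3$ cover. Your treatment of the second assertion is fine.

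The paper applies Fischer--Grauert as soon as all fibres are known to be $\PP^1$. You are right that this needs $\Pi^\pm$ to be a submersion, which set-theoretic fibres do not give. However, Step 4, which is meant to supply this, fails as written exactly over $\check X$, where it matters.

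\begin{itemize}
\item The families \eqref{eq:universalfamilydecomposable} and \eqref{eq:universalfamily} are parametrized by $(b_1,b_2)$, i.e.\ by an open set $\tilde V$ of the $S_3$-cover of $\PP^2$, which is ramified along $\check X$. So 4(c) produces a map $\tilde V\times\PP^1\to(\Pi^\pm)^{-1}(V)$ of generic degree $6$ (locally $2$ near smooth points of $\check X$). It is not a biholomorphism from $V\times\PP^1$.
\item ``Descending by $S_3$-equivariance'' is then the whole difficulty. It requires three things. First, each isotropy group must act trivially on the fibres over its fixed locus; this is the computation of \S\ref{sec:universalparabolic}, and without it the quotient can be singular, e.g.\ for $(\zeta,w)\mapsto(-\zeta,-w)$. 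Second, an averaging argument must linearize the action near fixed points. Third, you need that a bijective holomorphic map between manifolds is biholomorphic.
\item The obstacle you single out, a holomorphic flag avoiding the degree $0$ subbundles, is not the issue. The paper already uses $E_{p,2}=\{Z_1=0\}$, $E_{p,1}=[0:w:1]$.
\item \eqref{eq:universalfamilydecomposable} cannot be used near smooth points of $\check X$. At $b_1=b_2$ it gives type (2.2), which carries no stable parabolic structure.
\end{itemize}

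A simple repair avoids the cover. The matrix $B$ in \eqref{eq:universalfamily} has nonzero superdiagonal, so it is cyclic and hence conjugate to the companion matrix of $t^3-s_1t^2+s_2t-1$. Thus $E_{(A,B)}$ depends only on the elementary symmetric functions $(s_1,s_2)$ of $(b_1,b_2,b_3)$. These are local coordinates on $V$, provided the lifts near the base point are chosen so that $b_i=b_j$ if and only if $L_i\cong L_j$. The companion matrix therefore gives a family over $V$ itself, of types (1), (2.1), (3.1).

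At the base point, fix the normalizing line $\mathcal L$ through no invariant line (for $\Pplus$), or the point $P$ off every invariant plane (for $\Pmin$); these conditions persist nearby. By the unique transitivity in Proposition~\ref{prop:P1fiber(1)}, the resulting holomorphic map $V\times\PP^1\to(\Pi^\pm)^{-1}(V)$ is bijective. An injective holomorphic map between $3$-dimensional manifolds is biholomorphic onto its image. This gives local triviality directly, so miracle flatness and Fischer--Grauert become unnecessary.

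If you keep the flatness route instead, one local holomorphic section of $\Pi^\pm$ through each fibre suffices, e.g.\ this family with a fixed full flag. Then $\Pi^\pm$ is submersive at a point of the fibre. The fibre is a complete intersection with irreducible support, hence Cohen--Macaulay and generically reduced, hence reduced. It has arithmetic genus $0$ by flatness, so it is a smooth $\PP^1$, and $\Pi^\pm$ is smooth.
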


\subsection{Compactifications of the moduli spaces}

The moduli space of generic bundles $\Ugen$ is embedded
as a Zariski open subset in both $\Mmin$ and $\Mplus$ inducing a natural 
birational map $\Psi:\Mmin\dashrightarrow \Mplus$.

\begin{thm}\label{thm:MminequalMplus} The natural birational map $\Psi:\Mmin\dashrightarrow \Mplus$
is a biregular isomorphism of $\PP^1$-bundles:
\begin{diagram}
     \Mmin &\rTo^{\Psi}&\Mplus\\
     \dTo^{\Pi^-}&\circlearrowleft&\dTo_{\Pi^+}\\
      V_2 &\rTo^{\mathrm{id}}& V_2
         \end{diagram}
It induces an isomorphism between compactifying loci
$$\Psi\vert_{\Smin}:\Smin\stackrel{\sim}{\longrightarrow}\Splus.$$
\end{thm}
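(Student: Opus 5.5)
The plan is to build $\Psi$ fiber by fiber over $\PP^2$, show that it is given by one formula that depends holomorphically on the base, and then conclude with Zariski's main theorem. Since $\Psi$ is the identity on $\Ugen$, both $\Pi^-$ and $\Pi^+$ restrict on $\Ugen$ to the same forgetful map to Tu's moduli space. So $\Psi$ commutes with the projections wherever it is defined, and it is enough to understand its restriction to each fiber $(\Pi^\pm)^{-1}([E])\cong\PP^1$ described in Proposition \ref{prop:P1fiber(1)}.

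\textbf{Step 1: the fiberwise map for type (1).} Use the normalization of Proposition \ref{prop:P1fiber(1)}. A point of the $\Mmin$-fiber is a line $\mathcal L:\{aZ_1+bZ_2+cZ_3=0\}$ through $P=[1,1,1]$, so $a+b+c=0$, and $[a:b:c]$ lies in a pencil $\cong\PP^1$. If $\mathcal L$ avoids $L_1,L_2,L_3$ (so $abc\neq0$), the torus element $\mathrm{diag}(a,b,-c)$ sends $\mathcal L$ to the normalized line $\{Z_1+Z_2=Z_3\}$ and sends $P$ to $[a:b:-c]$, which lies on that line. Hence on $\Ugen$ the map is
$$\Psi_E:[a:b:c]\longmapsto[a:b:-c].$$
This is a linear isomorphism $\PP^1\to\PP^1$, so it extends across the three missing points. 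It sends the points of $\Smin$ in the fiber (for instance $a=0$, i.e. $\mathcal L\ni L_1$) to points of $\mathcal L\cap\mathcal L_{23}$, which are exactly the points of $\Splus$ in the fiber. So $\Psi_E$ is a bijection $\Smin\cap(\Pi^-)^{-1}([E])\to\Splus\cap(\Pi^+)^{-1}([E])$.

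\textbf{Step 2: types (2.1) and (3.1).} Repeat Step 1 with the non-diagonal automorphism groups $[aZ_1+bZ_2,aZ_2,cZ_3]$ and $[aZ_1+bZ_2+cZ_3,aZ_2+bZ_3,aZ_3]$. In each case I expect an explicit Möbius map between the pencil of lines through $[1,1,1]$ and the points of $\{Z_1+Z_2=Z_3\}$. It should send the $2$ (resp. $1$) points of $\Smin$ to the $2$ (resp. $1$) points of $\Splus$, namely the lines through $P_1$ or $P_2$ going to $\mathcal L\cap\mathcal L_1$ or $\mathcal L\cap\mathcal L_2$.

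\textbf{Step 3: holomorphic dependence on the base, the main obstacle.} The normalizations above jump at the cuspidal sextic $\check X$, where the type changes. So I would rephrase the fiberwise map intrinsically. Given a $\bmu^-$-stable flag $(P,\mathcal L)$, the $\bmu^+$-representative is the unique flag $(P',\mathcal L')$ in the same $\Aut(E)$-orbit-closure data, whose incidence with the degree $0$ subbundles is exchanged: $P$ lies off every $F_p$ becomes $\mathcal L'$ avoiding every $L_p$, and so on. Then I would check holomorphy using the local universal family $E_{(A,B)}$ of (\ref{eq:universalfamily}). Near each cusp point (the trivial bundle, after twisting by $3$-torsion) it contains only types (1), (2.1), (3.1), and the families of degree $0$ subbundles vary holomorphically there (eigen-flags of $B$). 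Writing the parabolic data in a frame adapted to $B$ should make the formula of Steps 1--2 a single holomorphic expression in $(b_1,b_2)$ and the fiber coordinate. The first thing I would try is to express $\Psi$ as the composition of the two normalizations through the stabilizer of $[1,1,1]$ in $\Aut(E)$, computed from the commutant of $B$. Near general points of $\check X$ the same argument uses the family (\ref{eq:universalfamilydecomposable}) adapted to one Jordan block.

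\textbf{Step 4: conclusion.} By Step 3, $\Psi$ extends to a morphism $M^-\to M^+$ over $\PP^2$. By Steps 1--2 it is bijective on every fiber, so it is bijective. Since $M^\pm$ are smooth, Zariski's main theorem shows that $\Psi$ is biregular. The fiberwise statements about $\Smin$ and $\Splus$ then give $\Psi(\Smin)=\Splus$.
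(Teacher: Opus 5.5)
Your fiberwise computations are correct and coincide with the paper's. Putting $(a,b,c)=(-t,1,t-1)$, your map $[a:b:c]\mapsto[a:b:-c]$ is exactly the paper's $t\mapsto\lambda=\frac{t}{t-1}$. Your expectation in Step 2 also holds: one finds the linear maps $[\alpha:\beta:\gamma]\mapsto[\beta:\alpha:-\gamma]$ for type (2.1) and $[\alpha:\beta:\gamma]\mapsto[\alpha-\beta:\beta:\alpha]$ for type (3.1), and both send the points of $\Smin$ to those of $\Splus$.

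The gap is Step 3, on which Step 4 rests entirely. You never prove that these fiberwise M\"obius maps assemble into a morphism $\Mmin\to\Mplus$ across $\check X$ and its cusps; you only describe what you would try, and that plan is not convincing as stated. The normalizations of Proposition \ref{prop:P1fiber(1)} jump along $\check X$. The ``intrinsic'' description by exchanged incidences defines nothing at the boundary, because a point of $\Smin$ and its image in $\Splus$ are flags in different $\Aut(E)$-orbits. Finally, you would need local universal families for both chambers near every point of $\check X$ and an explicit comparison between them, which you have not written down.

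The missing idea is that no explicit formula in the base is needed.
\begin{itemize}
\item By Proposition \ref{prop:P1fiber(1)} and the corollary following it, $\Pi^\pm$ are locally trivial $\PP^1$-bundles, and $\Ugen$ meets \emph{every} fiber in the complement of at most three points.
\item Over a small open set $U\subset\PP^2$ trivializing both bundles, the birational map $\Psi$ commutes with the projections. It is therefore a M\"obius map over the function field, $w\mapsto\frac{aw+b}{cw+d}$, with $a,b,c,d$ holomorphic on $U$. After clearing denominators they have no common factor, so $\{a=b=c=d=0\}$ has codimension $\ge 2$.
\item Suppose $ad-bc$ vanished at a point $u$ outside that set. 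Then the formula would be constant on the fiber over $u$ minus one point. Since it agrees with $\Psi$ wherever both are regular, $\Psi$ would be constant on a cofinite subset of $\Ugen\cap(\Pi^-)^{-1}(u)$, contradicting the injectivity of $\Psi\vert_{\Ugen}$.
\item Hence $\{ad-bc=0\}$ lies in a codimension-$2$ set. The zero set of a holomorphic function $\not\equiv 0$ is empty or of pure codimension one, so it is empty.
\end{itemize}
Thus $\Psi$ is biregular over $U$, and $\Psi(\Smin)=\Psi(\Mmin\setminus\Ugen)=\Mplus\setminus\Ugen=\Splus$.

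The paper spells out this argument at the end of the proof of Theorem \ref{thm:PTP2}. It is what justifies the paper's brief ``extends by continuity, and therefore holomorphically'' here. With it, your appeal to Zariski's main theorem becomes unnecessary.
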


\begin{proof}
    The restriction of $\Psi$ to $\ugen$ is an isomorphism
    commuting with $\Pi^\pm$ as in the commutative diagram.
    In restriction to a generic fiber, it is isomorphically sending 
    $\PP^1$ minus $3$ points onto $\PP^1$ minus $3$ points;
    it automatically extends by continuity, and therefore holomorphically.
    A similar argument can be done for special fibers intersecting $\Sigma^\pm$ at $2$ or $1$ point.

    We can even be more explicit. Going back to the proof of Proposition \ref{prop:P1fiber(1)}, the fiber $(\Pi^-)^{-1}(E)$ can be parametrized by the slope $t\in\PP^1$ of the line $\mathcal L^-:\{Z_2-tZ_1=(1-t)Z_3\}$ passing through $P^-=[1:1:1]$;
    the intersection with $\Smin$ is given by $t=0,1,\infty$.
    On the other hand, the fiber $(\Pi^+)^{-1}(E)$ can be parametrized by the position of the point $P^+=[\lambda:1-\lambda:1]$
    along the fixed line $\mathcal L^+:\{Z_1+Z_2=Z_3\}$;
    the intersection with $\Splus$ is given by $\lambda=0,1,\infty$.
    One easily check that the automorphism
    $$[Z_1,Z_2,Z_3]\mapsto [(1-t) Z_1,t Z_2,t(1-t) Z_3]$$
    is sending parabolic structure $P^-\subset\mathcal L^-$
    into $P^+\subset\mathcal L^+$ with $\lambda=\frac{t}{t-1}$.
    This defines the natural isomorphism 
    $$\PP^1\setminus\{0,1,\infty\}\to \PP^1\setminus\{0,1,\infty\}\ ;\ t\mapsto \frac{t}{t-1}$$
    in restriction to $\ugen$, that extends on the whole fiber $\PP^1$ by permuting the boundary points.
\end{proof}

\subsection{A universal family for parabolic bundles}\label{sec:universalparabolic}

Before proving Theorem \ref{thm:PTP2}, let us go back to the construction of a universal family developed in section \ref{sec:universalfamilyTu}, and construct a local universal family of parabolic bundles. In order to do this, consider the family of flat vector bundles $E=E_{(A,B)}$ defined by (\ref{eq:universalfamily}). One easily check that, for $b_i\not= b_j$,  degree $0$ subbundles intersect $\PP(E\vert_p)$ at:
\begin{itemize}
    \item $L_1:[1:0:0]$,
    \item $L_2:[1:(b_2-b_1):0]$,
    \item $L_3:[(b_1b_2)^2:b_1b_2(1-b_1^2b_2):(1-b_1^2b_2)(1-b_1b_2^2)]$,
    \item $L_1\oplus L_2:\{z_3=0\}$,
    \item $L_1\oplus L_3:\{Z_2=\frac{b_1b_2}{1-b_1b_2^2}\}Z_3$,
    \item $L_2\oplus L_3:\{Z_2=(b_2-b_1)Z_1+\frac{b_1b_2}{1-b_1^2b_2}Z_3\}$.
\end{itemize}
For $(b_1,b_2)\sim(1,1)$, all three $L_i$'s are close to $[1:0:0]$.
Therefore, if we set $E_{p,2}=\{Z_1=0\}$ and $E_{p,1}=[0:w:1]$,
then $E_{p,1}\subset E_{p,2}$ defines a $\bmu$-stable parabolic structure for $\bmu\in\Pplus$. For $(b_1,b_2)\sim(1,1)$, we thus get a local universal family of parabolic vector bundles for $\Mplus$.
If we normalize the $L_i$'s and parabolic structure as in the proof
of Proposition \ref{prop:P1fiber(1)}, we get $P=[\lambda:1-\lambda:1]$ where
$$\lambda=\frac{(1-b_1b_2^2)(1-b_1^2b_2)}{b_1^2b_2^2(b_2-b_1)}w-\frac{1-b_1b_2^2}{b_1b_2(b_2-b_1)}.$$
We can consider the action induced by permutations (\ref{eq:permutations})
$$\sigma_{12}(b_1,b_2)=(b_2,b_1),\ \ \ \sigma_{23}(b_1,b_2)=(b_1,\frac{1}{b_1b_2})\ \ \ \text{and}\ \ \ \sigma_{13}(b_1,b_2)=(\frac{1}{b_1b_2},b_2).$$
on the parabolic structure, and we get 
$$\sigma_{12}:\left\{\begin{matrix}
    \lambda&\mapsto&1-\lambda\\ w&\mapsto&w
\end{matrix}\right.\ \ \ \sigma_{23}:\left\{\begin{matrix}
    \lambda&\mapsto&\frac{1}{\lambda}\\ w&\mapsto&\frac{b_1b_2w}{(b_1^2b_2-1)w+b_1b_2}
\end{matrix}\right.\ \ \ \sigma_{13}:\left\{\begin{matrix}
    \lambda&\mapsto&\frac{\lambda}{\lambda-1}\\ w&\mapsto&\frac{b_1b_2w}{(b_1b_2^2-1)w+b_1b_2}
\end{matrix}\right.$$
We thus check that permutations act trivially over their $(b_1,b_2)$-fixed point set: for instance, $\sigma_{13}$ acts trivially on $w$ over $b_1^2b_2=1$. This is why we get a $\PP^1$-bundle in the quotient.
We also have 
$$\begin{matrix}
    \lambda=0&\leftrightarrow&w=\frac{1}{b_3-b_1}\\
    \lambda=1&\leftrightarrow&w=\frac{1}{b_3-b_2}\\
    \lambda=\infty&\leftrightarrow&w=\infty\hfill
\end{matrix}$$
The points of $\Splus$ correspond to horizontal sections $\lambda=0,1,\infty$ that are permuted by the action of permutations. For instance,
$\lambda=0$ and $\lambda=1$ are permuted by the action of $\sigma_{12}$;
but they intersect transversely in $w$-coordinate over $b_1=b_2$. We deduce that $\Splus$ is smooth, branching over the smooth part of $V_1$.

\subsection{ Compactifying loci and the dual curve }

Let us consider the incidence diagram:
\begin{diagram}
     M:=&\PP(T{\PP^2}) &\rTo^{\check{\Pi}}&\check{\PP}^2&\supset\check{X}\\
     &\dTo^{\Pi}&&&\\
      X\subset&\PP^2 &&& 
         \end{diagram}
Recall that $V_2:=\Bun_{\cO_X}(3,X)$ naturally identifies with $\check{\PP}^2$, $V_1=\check{X}$ is the locus of partially undecomposable bundles,
and $V_2=\mathrm{Sing}(\check{X})$, the locus of (totally) undecomposable
bundles. Denote by $\Sigma\subset M$ the smooth subvariety defined by $\Sigma=\Pi^{-1}(X)$, a $\PP^1$-bundle over $X$.

\begin{figure}[h!]
    \centering
    \includegraphics[width=0.6\linewidth]{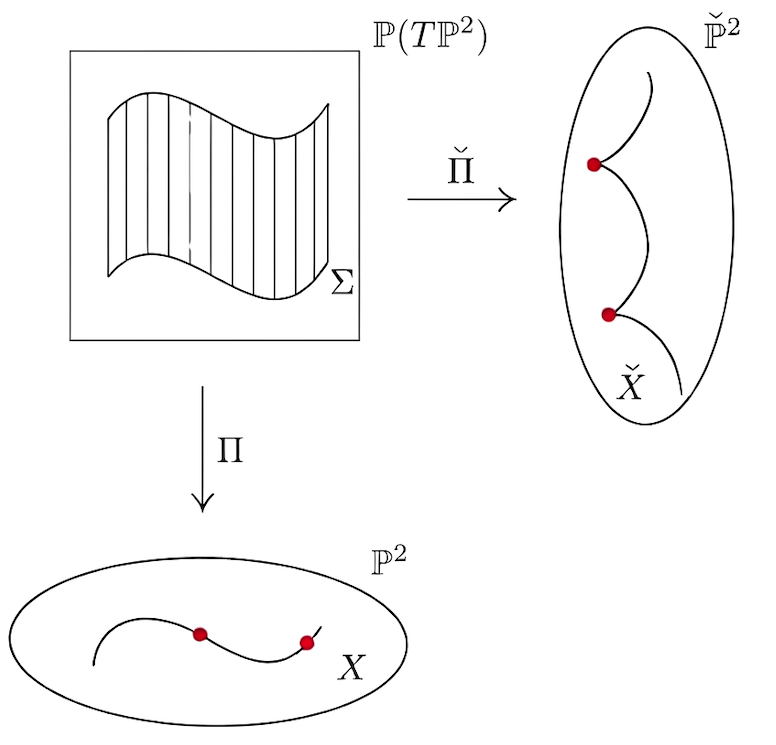}
    \caption{ The compactifying locus as a $\PP^1$-bundle over $X$}
    \label{fig:duality}
\end{figure}

On the other hand, fix a sign $\pm$, and consider the corresponding pair $(M^\pm,\Sigma^\pm)$ defined in Theorem \ref{thm:decompositionUgensigma}.

\begin{thm}\label{thm:PTP2} There is an isomorphism $\Psi^+:M\to \Mplus$
making the following diagram commutative
\begin{diagram}
     M &\rTo^{\Psi^+}&\Mplus\\
     \dTo^{\check{\Pi}}&\circlearrowleft&\dTo_{\Pi^+}\\
      \check{\PP}^2 &\rTo^{\psi}& V_2 
         \end{diagram}
and restricting to $\Sigma$ as an isomorphism
$$\Psi^+\vert_{\Sigma}:\Sigma\stackrel{\sim}{\longrightarrow}\Splus.$$
\end{thm}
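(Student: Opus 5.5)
The plan is to build $\Psi^+$ fiberwise from the normalization in Proposition \ref{prop:P1fiber(1)}, and then check that the construction is holomorphic in families. Fix a line $\mathcal{L}\in\check{\PP}^2$ meeting $X$ at $p_1,p_2,p_3$, so $\psi(\mathcal{L})=[E]$ with $E$ of type (1), (2.1) or (3.1). A point of the fiber $\check{\Pi}^{-1}(\mathcal{L})\subset M$ is a point $x\in\mathcal{L}\subset\PP^2$. We must associate to $(x,\mathcal{L})$ a $\bmu$-stable parabolic structure $E_{p,1}\subset E_{p,2}$ with $\bmu\in\Pplus$.

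In type (1), the fiber $(\Pi^+)^{-1}([E])$ is the line $\mathcal{L}^+=\{Z_1+Z_2=Z_3\}\subset\PP(E_p)$, on which $P^+$ moves. Its three special points $\mathcal{L}^+\cap\mathcal{L}_{ij}$ are indexed by the unordered pairs $\{i,j\}$, equivalently by the complementary factor $L_k$. Under Tu's isomorphism (Theorem \ref{thm:TuExplicitIsomorphism}), $L_k=\mathcal{O}_X(p_k-\infty)$. I therefore define $\Psi^+$ on $\check{\Pi}^{-1}(\mathcal{L})\cong\PP^1$ as the unique projective isomorphism $\mathcal{L}\to\mathcal{L}^+$ sending $p_k$ to $\mathcal{L}^+\cap\mathcal{L}_{ij}$. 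This map is canonical, since $\mathrm{Aut}(E)$ has been fully used to normalize $\mathcal{L}^+$ and the three marked points are distinct. By construction it sends $\Sigma\cap\check{\Pi}^{-1}(\mathcal{L})=\mathcal{L}\cap X$ onto $\Splus\cap(\Pi^+)^{-1}([E])$.

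For types (2.1) and (3.1), where $\mathcal{L}$ is tangent to $X$ or is a flex tangent, the marked points collide. Here the fiberwise map is no longer determined by marked points alone, so I will instead define $\Psi^+$ over the open set $\check{\PP}^2\setminus\check{X}$ and extend it.

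The main step is to show that this fiberwise map is an isomorphism of $\PP^1$-bundles over all of $\check{\PP}^2$. Over $\check{\PP}^2\setminus\check{X}$ it is holomorphic, because both the roots $p_k$ and the special points $\lambda=0,1,\infty$ vary holomorphically on the $S_3$-cover $(\Jac X)^2$, and the construction is $S_3$-equivariant. Compare the permutation action on $\lambda$ computed in Section \ref{sec:universalparabolic} with the action of $S_3$ on the cross-ratio coordinate of $\mathcal{L}$ relative to $(p_1,p_2,p_3)$: both are the standard anharmonic action ($\lambda\mapsto1-\lambda,1/\lambda,\lambda/(\lambda-1)$).

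To extend across $\check{X}$, which I expect to be the hardest point, I will work on the local universal family of Section \ref{sec:universalparabolic}, which gives a chart of $M^+$ near the cusp with coordinates $(b_1,b_2,w)$, and write $\Psi^+$ in these coordinates. An alternative is to use the three horizontal sections $\Sigma$ and $\Splus$ over the cover: on each side they define a trivialization by $(0,1,\infty)$, and the map is $\lambda\mapsto\lambda$ after matching. Off $\check{X}$ the map is then a bundle isomorphism between two $\PP^1$-bundles (locally trivial by Fischer–Grauert) that carries a degree-3 multisection onto a degree-3 multisection. The remaining concern is a possible pole or indeterminacy along $\check{X}$. I will rule this out by the explicit local formula: in the coordinate $w$, both $\Sigma$ and $\Splus$ branch simply over the smooth part of $\check{X}$, and the sections $w=1/(b_3-b_1)$ and so on match $p_k$ transversally on both sides. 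So the map extends holomorphically in codimension one. The cusps are isolated points of codimension two, and a biholomorphism of $\PP^1$-bundles defined off codimension two extends by Hartogs applied to the transition functions in $\mathrm{PGL}_2$, or more simply because the same argument applies to its inverse.

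It then remains to check two things. First, that $\Psi^+$ restricted to $\Sigma$ lands in $\Splus$ everywhere; this holds by closure, since $\Sigma$ and $\Splus$ are the closures of their restrictions over $\check{\PP}^2\setminus\check{X}$. Second, that the diagram $\Pi^+\circ\Psi^+=\psi\circ\check{\Pi}$ commutes; this is immediate from the fiberwise definition.
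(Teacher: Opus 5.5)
Your proposal is correct and follows essentially the same route as the paper. Both arguments have the same three steps:
\begin{enumerate}
\item Over $\check{\PP}^2\setminus\check{X}$, the fiber map is defined by cross-ratio matching of $\mathcal{L}\cap X=\{p_1,p_2,p_3\}$ with the three labelled points of $\Splus$ on the normalized line, with the anharmonic $S_3$-check.
\item The map extends across the smooth part of $\check{X}$ because both trisections are simply branched there. This is the paper's invariant $k=k^+=1$, read off from smoothness of $\Sigma$ and from the local universal family.
\item The map extends over the cusps by a codimension-two argument.
\end{enumerate}
The only difference is in the last step: the paper extends the meromorphic coefficients of $w\mapsto\frac{aw+b}{cw+d}$ by Levi's theorem and then uses non-vanishing of $ad-bc$ off codimension two. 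Your Hartogs argument for the $\mathrm{PGL}_2$-valued map is equally valid because $\mathrm{PGL}_2$ is affine, so the alternative ``apply the same argument to the inverse'' is not needed.
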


\begin{proof}[Proof of Theorem \ref{thm:PTP2}]
    We first construct $\Psi$ over the main open strata $V_2^\circ=V_2\setminus V_1\subset\check{\PP}^2$. 
    In order to do this, we complete the construction of Theorem \ref{thm:TuExplicitIsomorphism} by adding the parabolic structure. A point of $\mathcal V_2^\circ:=(\check{\Pi}^+)^{-1}(V_2^\circ)$
    is a line $\mathcal L\subset\PP^2$ intersecting $X$ in $3$ distinct points $p_1,p_2,p_3$ and an additional point $p_4\in\mathcal L$.
    For the moment, we have ordered points, but we will get rid of this choice later.
    We first associate to this data the vector bundle $E=\bigoplus_{i=1}^3\cO_X(p_i-\infty)$. In order to add a parabolic structure, let us choose homogeneous coordinates $[Z_1:Z_2:Z_3]$ on the fiber $\PP(E\vert_p)$ as in the proof of Proposition \ref{prop:P1fiber(1)}.
    Then we define the parabolic structure by the data of a normalized 
    line $\mathcal L_0:\{Z_1+Z_2=Z_3\}$, and the point $P=q_4$ such that
    the following cross-ratios coincide:
    $$(p_1,p_2;p_3,p_4)=(q_1,q_2;q_3,q_4)=\lambda$$
    where $q_i=\mathcal L_0\cap \{Z_i=0\}$. 
    We observe that $p_4\in X$ correspond to $\Sigma$ while 
    $P_1,P_2,P_3$
    correspond to $\Splus$. Moreover, we can check that the parabolic bundle does not depend of the choice of the ordering of $p_1,p_2,p_3$.
    For instance, the permutation $\sigma_{12}$ permutes the two factors
    $L_1$ and $L_2$ of $E$; consequently, it permutes the points $p_1$ and $p_2$ on the one hand, and the points $P_1=[1:0:0]$ and $P_2=[0:1:0]$ on the other hand, therefore permuting $q_1$ and $q_2$.
    The two cross-ratio are both changed by 
    $\lambda\mapsto 1-\lambda$, and are still equal.
    We have constructed a birational isomorphism of $\PP^1$-bundles $\Psi^+:\Sigma\dashrightarrow M^+$ which is biregular over $V_2^\circ$, sending $\Sigma\cap\check{\Pi}^{-1}(\check{\PP}^2\setminus\check{X})$ to $\Splus\cap V_2^\circ$.

    Let us prove that $\Psi$ is also biregular over $V_1^\circ=V_1\setminus V_0$. Given a local transversal section to $z\in(\C,0)\hookrightarrow V_1^\circ$, the restriction of $M^+$ is a locally trivial $\PP^1$-bundle $(\C,0)\times\PP^1$ with coordinates $(z,w)$. The restriction of $\Splus$ has one regular sheet/section, and one double-sheet branching over $z=0$, and they do not intersect; up to change of coordinates, we may assume that $\Splus$ is given by 
    $w=\infty$ and $w^2+f(z)w+g(z)=0$ with $f,g$ holomorphic, $g(0)=0$ (Weierstrass Preparation). After a change of coordinate $w\mapsto a(z)w+b(z)$ with $a,b$ holomorphic, $a(0)\not=0$, we can assume 
    that $\Splus$ is given by 
    $w=\infty$ and $w^2=z^k$ with $k\in\Z_{>0}$ (use $b$ to kill $f$, and $a$ to normalize $g$). Then $k$ is the only local invariant of the configuration of $\Splus$ over $V_1^\circ$ (it is obviously constant on a Zariski open set $V_1'\subset V_1^\circ$). If we denote by $k+$ this invariant for $\Pi^+:(\Mplus,\Splus)\to V_2$ and $k$ the corresponding invariant for $\psi\circ\Pi:(M,\Sigma)\to V_2$,
    then $\Psi^+$ (previously constructed over $V_2^\circ$) extends as a biregular bundle isomorphims over $V_1'$ if, and only if, $k=k^+$.
    Indeed, the restriction of $\Psi^+$ to the transversal section must
    preserve $w=\infty$, and send $w^2=z^k$ to $w^2=z^{k^+}$: it must be 
    of the form $w\mapsto a(z)w$ where $a(z)=\pm z^{\frac{k^+-k}{2}}$.
    If $k\not=k^+$, then $a$ has a zero or a pole at $z=0$ (mind that
    $\Sigma$ and $\Splus$ have same monodromy, and therefore $\frac{k^+-k}{2}$ is integer); on the other hand, if $k=k^+$, then $\Psi^+$
    restricts as $w\mapsto\pm w$, and extends holomorphically.
    In our case, we know that $\Sigma$ is smooth, and we deduce that $k=1$. On the other hand, $\Splus$ can be locally constructed from the universal family of section \ref{sec:universalparabolic},
    by taking the quotient of 
    $$\{w=\infty\}\cup\{w=\frac{1}{b_3-b_1}\}\cup\{w=\frac{1}{b_3-b_2}\}$$
    by the involution $\sigma_{12}$ permuting $b_1\leftrightarrow b_2$.
    The last two factors are permuted and intersect transversely along $b_1=b_2$: this means that the invariant is $\tilde k^+=2$ before taking quotient, and is $k^+=1$ after taking quotient. In other words, starting
    with the transversal coordinate $\zeta=b_2-b_1$, we can normalize as 
    $$\{w=\infty\}\cup\{w^2=\zeta^2\}$$
    and after permutation $\sigma_{12}:(\zeta,w)\mapsto(-\zeta,w)$, we get
    $$\{w=\infty\}\cup\{w^2=z\}$$
    with $\zeta\mapsto \zeta^2=:z$ the quotient map.
    We conclude that $k=k^+=1$ and $\Psi^+$ extends biholomorphically over the neighborhood of a Zariski open set
    $v_1'\subset V_1^\circ$.

    Let us conclude that $\Psi^+$ is biregular everywhere. We note that it is biholomorphic outside of a codimension $2$ set in $V_2$.
    The biholomorphic extension is a local problem on $V_2$: 
    since the two $\PP^1$-bundles are locally trivial, we can restrict 
    to an open set $U$ where they are both trivial, and view 
    $\Psi^+$ as a birational automorphism of the trivial $\PP^1$-bundle
    on $U$, biregular outside of a codimension $2$ set $Z\subset U$.
    Let $w\in\PP^1$ be the trivializing projective coordinate.
    Maybe composing by a biregular automorphism, we can assume that 
    $w=\infty$ is not invariant, and we can write 
    $$\Psi^+:w\mapsto\frac{fw+g}{w+h}$$
    over $U\setminus Z$ with meromorphic functions $f,g,h$.
    By Levi's extension Theorem, $f,g,h$ extend as meromorphic functions on $U$ (i.e. through $Z$); after multiplying by a common denominator,
    we may rewrite
    $$\Psi^+:w\mapsto\frac{aw+b}{cw+d}$$
    with $a,b,c,d$ holomorphic without local common factor.
    But now, we have that $ad-bc$ is non vanishing over 
     $U\setminus Z$ where $\Psi^+$ is biregular; but holomorphic functions have codimension $1$ vanishing sets:
    therefore $ad-bc$ is nowhere vanishing, and $\Psi^+$ extends biholomorphically everywhere.
\end{proof}

\begin{rem}
 Note that for a generic parabolic bundle, all $E_{p,2}$ are isomorphic up to an automorphism of $E$. Therefore, the flag for $(E,p)$ is determined by choice of $E_{p,1}$. This can be seen as choosing a point on the projectivisation of $E_{p,2}$. Since $\PP(E_{p,2})$ is one-dimensional, we can thus see the parabolic structure as a $\mathbb{P}^1$. Thus the set of generic bundles $\Ugen \subset \mathrm{Bun}_{\mc{O}_X}(3,X)$ can be seen as a $\mathbb{P}^1$ bundle over $\check{\mathbb{P}}^2$. 
\end{rem}


\section{Automorphisms are modular}\label{section:Automorphisms}

The moduli spaces $(M^\pm,\Sigma^\pm)$ constructed in previous sections admit natural automorphisms arising from usual operations on parabolic bundles, that we call modular automorphisms. These operations are (see \cite{AG}):
\begin{enumerate}
    \item Taking pullback with respect to an automorphism $\sigma: X \to X$ that fixes the parabolic point $p$;
    \item tensoring by a line bundle;
    \item dualizing;
    \item applying a Hecke modification.
\end{enumerate}
Moreover, according to \cite[Lemma 5.7]{AG}, we know the composition rules of these transformations. In particular, one easily see that any finite composition of these is equivalent to the composition of at most one of each of these transformations.
Given a $3$-torsion line bundle $L\in\Jac(X)$, then the map
$$(E,E_\bullet)\mapsto (L\otimes E,L\otimes E_\bullet)$$
induces an automorphism of the moduli space $(M^\pm,\Sigma^\pm)$.
This gives an action of the $9$-group of $3$-torsion points 
$\Z/3\times\Z/3$ on $(M^\pm,\Sigma^\pm)$.
Similarly, dualization provides an involution:
$$(E,E_\bullet)\mapsto (\check{E},\check{E}_\bullet)$$
of the moduli space. 

On the other hand, Hecke modifications
shift the degree of $E$ by integers, and are determined by this integer-shift. Moreover, Hecke modification of degree $3k$ are given by tensoring with $\cO(kp)$. Since the first two operations respectively shift degree of $E$ by $3\deg(L)$, or change the sign,
the only combination of Hecke modification with other two operations that preserve the degree actually reduce to a combination of the two first operations. 

The main result of Alfaya and Gomez in \cite{AG} asserts that automorphisms of moduli spaces of parabolic bundles over curves of genus $g\ge6$ are modular. In our context, we cannot expect
such a result only considering $M^\pm$. Indeed, $M^\pm\cong\PP(T{\PP^2)}$ since any automorphism $\phi$ of $\PP^2$ can be lifted as an automorphism $\tilde\phi$ of $\PP(T{\PP^2)}$, while modular automorphisms are finite.
But if we add the special locus, then the automorphisms
of $(M^\pm,\Sigma^\pm)$ indeed are modular as well.

\begin{thm}\label{thm:ModularAutomorphism} Assume $(X,p)$ has no other automorphism than the elliptic involution $\sigma$ (fixing $p$). Then, the automorphism group of $(M^\pm,\Sigma^\pm)$ is modular, of order $18$, generated by the $9$-group of $3$-torsion points in $\Jac(X)$ together with the dualization.
Moreover, this action is just the lift to $\PP(T{\PP^2)}$ of
the automorphism group of $(\PP^2,X)$.
\end{thm}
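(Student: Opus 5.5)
Via Theorem \ref{thm:PTP2} and Theorem \ref{thm:MminequalMplus}, we may replace $(M^\pm,\Sigma^\pm)$ by the model $(M,\Sigma)=(\PP(T\PP^2),\Pi^{-1}(X))$, where $X\subset\PP^2$ is the cubic. Here $\Pi:M\to\PP^2$ and $\check\Pi:M\to\check\PP^2$ are the two projections of the incidence variety of points and lines. The plan has three steps: show that every automorphism of $(M,\Sigma)$ comes from an automorphism of $(\PP^2,X)$; compute that group; and identify its elements with modular operations.

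\textbf{Step 1: automorphisms of $M$ descend.} The variety $M$ has Picard rank $2$ and exactly two extremal contractions, $\Pi$ and $\check\Pi$, both $\PP^1$-bundles. This follows from the description of $\mathrm{Aut}(\PP(T\PP^2))$ mentioned in the acknowledgements. Hence any automorphism $\Phi$ of $M$ either preserves both fibrations or exchanges them. In the first case it is the lift of some $\phi\in\mathrm{PGL}_3$ acting on $\PP^2$ and compatibly on $\check\PP^2$. In the second case it is such a lift composed with the duality involution coming from a polarity $\PP^2\cong\check\PP^2$.

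Now impose $\Phi(\Sigma)=\Sigma$. The surface $\Sigma=\Pi^{-1}(X)$ is a union of $\Pi$-fibres. Its image $\check\Pi(\Sigma)$ is all of $\check\PP^2$, since $\check\Pi|_\Sigma$ has degree $3$; so $\Sigma$ is not a union of $\check\Pi$-fibres. Therefore an automorphism exchanging the two rulings cannot preserve $\Sigma$. A fibration-preserving $\Phi$ preserves $\Sigma$ exactly when $\phi(X)=X$. This gives $\mathrm{Aut}(M,\Sigma)\cong\mathrm{Aut}(\PP^2,X)$, which is the last assertion of the theorem.

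\textbf{Step 2: compute $\mathrm{Aut}(\PP^2,X)$.} Since $X$ is embedded by $|3\infty|$, an automorphism of $X$ extends to $\PP^2$ precisely when it preserves $\cO_X(3\infty)$. Every automorphism of $X$ has the form $x\mapsto \alpha(x)+t$ with $\alpha$ fixing $\infty$. By hypothesis $\alpha\in\{\mathrm{id},\sigma\}$. The condition $\tau_t^*\cO(3\infty)\cong\cO(3\infty)$ is equivalent to $3t=0$, so $t$ is a $3$-torsion point, and $\sigma$ always preserves $\cO(3\infty)$. The group is therefore the semidirect product $(\Z/3)^2\rtimes\Z/2$, of order $18$.

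\textbf{Step 3: match with modular operations.} This is the step I expect to require the most care. Under Tu's isomorphism (Theorem \ref{thm:TuExplicitIsomorphism}), a line $\mathcal L$ with $\mathcal L\cap X=\{p_i\}$ corresponds to $\bigoplus\cO(p_i-\infty)$. I will check the following dictionary on the induced action on $\check\PP^2=V_2$:
\begin{itemize}
\item Tensoring by a $3$-torsion $L=\cO(t-\infty)$ sends $\bigoplus\cO(p_i-\infty)$ to $\bigoplus\cO(p_i+t-\infty)$. The new points $p_i+t$ are still collinear because $3t=0$, so this is the translation $\tau_t$ acting on $\check\PP^2$.
\item Dualization sends each $p_i$ to $-p_i=\sigma(p_i)$, so it is the elliptic involution.
\end{itemize}
Two things remain. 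First, both operations preserve $\Sigma^\pm$; I will verify this directly, since they preserve degree-$0$ subbundles and flags (for dualization, check that it preserves $M^\pm$ and the stability chamber). Second, on $M$ the lifts must agree, not just the actions on $\PP^2$. Because the correspondence $\mathrm{Aut}(M,\Sigma)\to\mathrm{Aut}(\PP^2,X)$ is injective, agreement on $V_2\cong\check\PP^2$ suffices.

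The remaining modular operations add nothing new. Pullback by $\sigma$ gives the same action on $\check\PP^2$ as dualization, and Hecke modifications reduce to the first two operations, as discussed before the theorem. Hence the modular group is exactly the order-$18$ group found in Step 2.

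The main obstacle is Step 1: justifying that every automorphism of $\PP(T\PP^2)$ either preserves both rulings or swaps them. I would argue this via the two extremal rays of the cone of curves, using that $\mathrm{NE}(M)$ is spanned by the two fibre classes.
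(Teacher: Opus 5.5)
Your outline is essentially the paper's proof. The Mori cone of $M=\PP(T\PP^2)$ forces an automorphism to preserve or swap the two rulings. Preservation of $\Sigma$ then reduces everything to $\mathrm{Aut}(\PP^2,X)$, of order $18$. Finally, the generators are matched with tensoring by $3$-torsion and with dualization via Tu's isomorphism.

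Two local choices are cleaner than the paper's:
\begin{itemize}
\item You exclude the swap directly: a swap would make $\Sigma$ a union of $\check\Pi$-fibres, contradicting $\check\Pi(\Sigma)=\check{\PP}^2$. The paper instead goes through the unique ruling of $\Sigma$ and Blanchard's lemma.
\item You compute $\mathrm{Aut}(\PP^2,X)$ through the linear system $|3\infty|$ rather than by permuting flexes.
\end{itemize}

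One claim in Step 3 is wrong as stated: dualization does \emph{not} preserve the chamber. The dual parabolic structure is the annihilator flag $E_{p,2}^\perp\subset E_{p,1}^\perp$ with weights $(-\mu_3,-\mu_2,-\mu_1)$, so $\mu_2$ changes sign. Concretely, under $L\mapsto L^\perp$ and $F\mapsto F^\perp$, the conditions defining $M^+$ (no $L_p\subset E_{p,2}$, no $F_p=E_{p,2}$) become exactly the conditions defining $M^-$ for $\check E$. Hence $(E,E_\bullet)\mapsto(\check E,\check E_\bullet)$ maps $(M^+,\Sigma^+)$ isomorphically onto $(M^-,\Sigma^-)$; the check you planned would fail.

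There are two ways to repair this.
\begin{itemize}
\item Compose with the isomorphism $\Psi$ of Theorem \ref{thm:MminequalMplus}. This is legitimate because dualization preserves $\Ugen$, where $\Psi$ is the identity, and $\Psi$ commutes with Tu's map.
\item Use $\sigma^*$ instead. It does preserve $M^\pm$ and $\Sigma^\pm$, and $\sigma$ acts by $L\mapsto L^{-1}$ on $\Jac(X)$, so it induces the same map on $V_2$ as dualization. By your injectivity argument it is then the same element of $\mathrm{Aut}(M^\pm,\Sigma^\pm)$.
\end{itemize}

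The paper silently makes the same identification. With this fix, your argument is complete.
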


\begin{proof} We claim that $\Phi$ must preserve or permute the
two $\PP^1$-fibrations $\Pi:M\to\PP^2$ and $\check{\Pi}:M\to\check{\PP}^2$. 
One easy argument is that the Mori cone generated by effective curves
is $$\overline{NE}_1(M)=\R_{\ge0}\{f\}+\R_{\ge0}\{\check{f}\}$$
where $f$ and $\check{f}$ represent the fibers of the two fibrations.
Then $\Phi$ is acting linearly on the Mori cone, and must preserve the boundary
defined by $\{f\}$ and $\{\check{f}\}$, namely the two fibrations
(see \cite[section 2]{Maycol} about the cohomology of $M$).
    Let $\Phi$ be an automorphism of $(M,\Sigma)$ (we omit the sign). Then $\Phi$ preserves $\Sigma$ by definition, and must preserve the unique $\PP^1$-fibration on it. By consequence, it 
    must also preserve the extended $\PP^1$-fibration 
    $\Pi:M\to\PP^2$, for instance by Blanchard's Lemma, and consequently the other one $\check{\Pi}$. In particular, it projects to automorphisms $\phi$ of $\PP^2$ and $\check{\phi}$ of $\check{\PP}^2$, and is the lift of any one of them using the contact distribution generated by the two $\PP^1$-fibrations. Since $\Sigma=\Pi^{-1}(X)$, we have that $\phi$
    is preserving the cubic $X$. By assumption, the automorphism group of $(\PP^2,X)$ has order $18$, generated by the elliptic involution, and the translation by $3$-order points.
    Indeed, flex must be permuted by $\phi$; after translation, we can assume $\pi(\infty)=\infty$ and deduce that $\phi$ is the elliptic involution. On the other hand, one can look at the action of modular automorphisms on $E$, i.e. as the action induced on lines in $\PP^2$ via Tu's automorphism in \ref{thm:TuExplicitIsomorphism}. Then we clearly see that the action of $3$-torsion line bundles 
    $$
        L_1\oplus L_2\oplus L_3\mapsto L_1'\oplus L_2'\oplus L_3',
        \ \ \  L_i'=L\otimes L_i
    $$
    induces an action by translation by $p_0$ on $X$:
    $$L_i=\cO_X(p_i-\infty)\mapsto L_i'=\cO_X(p_i+p_0-2\infty).$$
    Similarly, the dualization 
    $$L_1\oplus L_2\oplus L_3\mapsto \check{L}_1\oplus \check{L}_2\oplus \check{L}_3$$
    induces the action on $X$ by the elliptic involution
     \[L_i=\cO_X(p_i-\infty)\mapsto L_i'=\cO_X(\sigma(p_i)-\infty),\]
     which completes the proof. 
\end{proof}


\section{A higher rank Torelli theorem}\label{section:hr-Torelli}
In this section we prove a higher rank Torelli theorem for the moduli space of semistable parabolic bundles of rank 3, trivial determinant bundle, on a unique parabolic point.

Let $X$ be an elliptic curve and $\infty$ the point corresponding to the neutral element in group structure of $X$. Denote by $\mc{E}_2^1$ the unique non trivial extension 
$$0\to \cO_X\to \mc{E}_2^1\to \cO_X(\infty)\to 0$$
and let $S_1=\PP(\mc{E}_2^1)$ be the associated ruled surface.

\begin{prop}\label{prop:SplusminisS1}
    The surface $\Sigma\cong\Splus\cong\Smin$ is isomorphic to 
    the ruled surface $S_1$.
\end{prop}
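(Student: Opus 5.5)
By Theorem \ref{thm:PTP2} and Theorem \ref{thm:MminequalMplus} we already know $\Sigma\cong\Splus\cong\Smin$, where $\Sigma=\Pi^{-1}(X)\subset M=\PP(T\PP^2)$. So it suffices to identify $\Sigma$ with $S_1$. Here $\Sigma$ is the restriction of the $\PP^1$-bundle $\Pi:\PP(T\PP^2)\to\PP^2$ to the cubic $X$, so $\Sigma\cong\PP(T\PP^2\vert_X)$. The plan is therefore:
\begin{enumerate}
\item compute the rank $2$ bundle $T\PP^2\vert_X$;
\item show it is, up to a line bundle twist, the unique non-trivial extension $\mc{E}_2^1$ of $\cO_X(\infty)$ by $\cO_X$.
\end{enumerate}
Since projectivization is insensitive to twisting, this gives $\Sigma\cong\PP(\mc{E}_2^1)=S_1$.

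For the computation I would use the restriction of the Euler sequence and of the normal sequence. Embedding $X\subset\PP^2$ as a plane cubic, we have $\cO_{\PP^2}(1)\vert_X=\cO_X(3\infty)$ after choosing $\infty$ a flex (as in Theorem \ref{thm:TuExplicitIsomorphism}). The tangent–normal sequence gives
$$0\to T_X=\cO_X\to T\PP^2\vert_X\to N_{X/\PP^2}=\cO_X(3)\vert_X=\cO_X(9\infty)\to 0,$$
so $\det T\PP^2\vert_X=\cO_X(9\infty)$ and $\deg T\PP^2\vert_X=9$, which is odd. By Atiyah's classification, a rank $2$ bundle of odd degree on $X$ is either indecomposable, hence stable and of the form $\mc{E}_2^1\otimes M$, or decomposable. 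So the key step is to prove that $T\PP^2\vert_X$ is indecomposable, equivalently semistable (slope $9/2$). Once this is established, $\det(\mc{E}_2^1\otimes M)=\cO_X(\infty)\otimes M^2=\cO_X(9\infty)$ is solved by $M=\cO_X(4\infty)$, and the identification follows.

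The main obstacle is this semistability/indecomposability. I would argue as follows. Suppose $T\PP^2\vert_X$ had a line subbundle $N$ of degree $\ge 5$. Then the composite $N\to\cO_X(9\infty)$ is nonzero, because $N$ cannot lie in $T_X=\cO_X$; hence $\deg N\le 9$. To exclude this range, I would use the twisted Euler sequence
$$0\to\cO_X(-1)\to\cO_X^{3}\to T\PP^2(-1)\vert_X\to 0,$$
which exhibits $T\PP^2(-1)\vert_X$ as the dual of the kernel of the evaluation map $\cO_X^3\to\cO_X(3\infty)$, that is, as the dual of $M_{\cO(1)}$. It is a standard fact that for a line bundle of degree $d\ge 2g+1=3$ on an elliptic curve, the kernel bundle $M_{\cO(1)}$ is stable. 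Alternatively, I would compute directly: a line subbundle of $T\PP^2(-1)\vert_X$ (degree $3$) of degree $\ge 2$ would give a quotient of $\cO_X^3$ of degree $\le 1$ generated by global sections, which would force a map $X\to\PP^1$ of degree $\le 1$ or a trivial quotient. Both are impossible, the latter because the three linear forms are independent on $X$.

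Given that, $T\PP^2\vert_X$ is stable of degree $9$ and rank $2$, and we conclude $\Sigma\cong S_1$. The statement for $\Splus$ and $\Smin$ then follows from the previous theorems.
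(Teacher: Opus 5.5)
Your argument is correct, and it takes a genuinely different route from the paper.

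\textbf{The paper's route.} The paper works directly with $\Splus$ through the local universal family of Section~\ref{sec:universalparabolic}. It presents $\Splus$ as the quotient of the three special sections $\lambda=0,1,\infty$ by the permutation group. Equivalently, this is the quotient of the single section $\{\lambda=0\}$ by its stabilizer $\langle\sigma_{12}\rangle$, which identifies $\Splus$ with the symmetric square $X\times X/\langle\sigma\rangle$. The Abel--Jacobi map to $\Jac(X)$ then makes it a ruled surface. The images of the curves $X\times\{p\}$ give a family of $(+1)$-sections meeting pairwise in a single point. Finally, Atiyah's classification of ruled surfaces carrying $(+1)$-sections singles out $S_1$; in the decomposable case $\PP(\cO_X\oplus\cO_X(p))$ those sections all pass through a common point.

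\textbf{Your route.} You use the fact that, on the $\PP(T\PP^2)$ side, $\Sigma=\Pi^{-1}(X)=\PP(T\PP^2|_X)$ is already a projectivized bundle over $X$. Everything then reduces to the stability of $T\PP^2|_X$, which your Euler-sequence argument proves cleanly. A destabilizing quotient of $T\PP^2(-1)|_X$ would be a globally generated line bundle of degree $\le 1$, hence $\cO_X$; this would force a linear relation among the coordinates on $X$. You don't need the detour through maps to $\PP^1$ here: a degree-one line bundle on $X$ has a single section, so it cannot be globally generated. The preliminary bound $\deg N\le 9$ is also unnecessary.

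\textbf{Comparison.} Your route is short and self-contained, and it avoids the classification of ruled surfaces. It even identifies the underlying bundle, $T\PP^2|_X\cong\mc{E}_2^1\otimes\cO_X(4\infty)$. The price is that it rests entirely on Theorem~\ref{thm:PTP2} for $\Sigma\cong\Splus$, whose proof itself used the universal family. The paper's route instead gives an intrinsic modular description $\Splus\cong\mathrm{Sym}^2X\to\Jac(X)$ with explicit $(+1)$-sections, and this is the form of the ruling used later in the Torelli argument.

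The two pictures are matched by sending a point $(q,\mathcal{L})\in\Pi^{-1}(X)$ to the residual pair $\mathcal{L}\cdot X-q\in\mathrm{Sym}^2X$. Under this map your ruling over $X$ (via $q$) corresponds to the Abel--Jacobi ruling, since $q$ is determined by the linear equivalence class of the residual pair.
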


\begin{proof}
    Returning to the construction of the universal family in section \ref{sec:universalparabolic}, we observe that the surface $\Splus$ can be deduced as the quotient 
    $$\Splus=\{\lambda=0\}\cup\{\lambda=1\}\cup\{\lambda=\infty\}/\langle \sigma_{12},\sigma_{23} \rangle $$
    of the $3$ special sections by the permutation group. 
    Equivalently, it is given by the quotient
    $$\Splus=\{\lambda=0\}/\langle \sigma_{12}\rangle$$
    of a single section by its isotropy subgroup, which is nothing else than the quotient $\Splus=X\times X/\langle \sigma \rangle$ of the square of $X$ by the permutation $\sigma$ of the two factors. The natural map
    $$\pi:X\times X/\langle \sigma \rangle \to \Jac(X)\ ;\ \{p_1,p_2\}\mapsto L=\cO_X([p_1]+[p_2]-2[\infty])$$
    make $\Splus$ the structure of a ruled surface, since $\pi^{-1}(L)$
    can be viewed as the linear system 
    $\left\vert [p_1]+[p_2]\right\vert\cong\PP^1$. Horizontal curves
    $X\times\{p\}\subset X\times X$ define a family of sections $s_{p}$ of
    $\pi:\Splus\to\Jac(X)$ parametrized by $p\in X$. Since the same family of sections can be defined by vertical curves $\{p\}\times X$, these sections moreover 
    pairwise intersect transversely at a single point: 
    $$\{p_1\}\times X\cap X\times\{p_2\}=(p_1,p_2)\ \ \ \leadsto\ \ \ s_{p_1}\cap s_{p_2}=\{p_1,p_2\}\in\Splus.$$
    We recognize the geometry of $S_1$ as described in \cite{Diaw}
 
    In fact, following Atiyah \cite{atiyah55}, ruled surfaces having $(+1)$-selfintersection sections are either decomposable, i.e.
    $\PP(\cO_X\oplus\cO_X(p))$, or $S_1$. In the former case, the family of $(+1)$-sections form a pencil: they all intersect at the same base point. In our case, a general point $\{p_1,p_2\}\in\Splus$ is the intersection point of two $(+1)$-sections, as for $S_1$.
\end{proof}

Given an elliptic curve $X$ with marked point $p$, we can associate to it, as in the previous section, a pair $(M,\Sigma)$ as follows:
\begin{itemize}
    \item $M=\mathrm{Bun}^{{\bmu}}_{\mc{O}_X}(3,X,p)$ is the moduli space of 
    $\bmu$-stable parabolic bundles for some admissible weight $\bmu$,
    \item $\Sigma\subset M$ is the locus of those parabolic bundles that are stable only 
    for the chamber that contains $\bmu$.
\end{itemize}
Recall that the isomorphism class of $(M,\Sigma)$ does not depend on the choice of $\bmu$.

\begin{thm}\label{Thm: Torelli}
Let $(X,p)$ and $(X',p')$ be elliptic curves, and $(M,\Sigma)$
and $(M',\Sigma')$ the corresponding pairs.
Then $(X,p)\cong (X',p')$ if and only if $(M,\Sigma)\cong (M',\Sigma')$.
\end{thm}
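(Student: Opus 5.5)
The forward implication is formal. An isomorphism $\varphi:(X,p)\to(X',p')$ induces, by pullback of parabolic bundles, a bijection between $\bmu$-stable parabolic bundles on $(X',p')$ and on $(X,p)$. This bijection is compatible with families, so it gives an isomorphism $M'\to M$. It preserves degree $0$ subbundles and the position of flags relative to them, hence preserves the set of bundles stable for only one chamber. So it sends $\Sigma'$ onto $\Sigma$. Since the isomorphism class of $(M,\Sigma)$ does not depend on $\bmu$, this direction is done.

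For the converse, the plan is to recover $X$ intrinsically from the pair. I see two routes and would carry out both as cross-checks.

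\textbf{Route 1, via the ruled surface.} By Proposition \ref{prop:SplusminisS1}, $\Sigma\cong S_1=\PP(\mc{E}_2^1)$ is a ruled surface over $\Jac(X)\cong X$. An isomorphism $(M,\Sigma)\cong(M',\Sigma')$ restricts to an isomorphism $S_1(X)\cong S_1(X')$. A ruled surface over an elliptic curve has a unique ruling: the fibers are the only rational curves, since there is no nonconstant map $\PP^1\to X$. Hence the isomorphism maps fibers to fibers and descends to an isomorphism of base curves $X\cong X'$. Because any elliptic curve is homogeneous, translation then gives $(X,p)\cong(X',p')$.

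\textbf{Route 2, via the fibration.} By Theorem \ref{thm:PTP2} we may identify $(M,\Sigma)$ with $(\PP(T\PP^2),\Pi^{-1}(X))$. The Mori cone argument in the proof of Theorem \ref{thm:ModularAutomorphism} shows that any isomorphism $M\to M'$ preserves or swaps the two $\PP^1$-fibrations. On $\Sigma$, the fibers of $\Pi$ are contained in $\Sigma$, whereas the fibers of $\check\Pi$ are not. So the isomorphism preserves $\Pi$, descends to $\phi\in\Aut(\PP^2)$, and satisfies $\phi(X)=X'$ because $\Sigma=\Pi^{-1}(X)$. This gives an isomorphism of curves.

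The main obstacle is the justification in Route 1 that $\Sigma$ really is $\Pi^{-1}(X)$ with its ruling over $X$, rather than some other $\PP^1$-bundle, and the verification that the isomorphism class of $(M,\Sigma)$ is independent of the chamber. Both are supplied by Theorems \ref{thm:MminequalMplus} and \ref{thm:PTP2} together with Proposition \ref{prop:SplusminisS1}. I would therefore cite these results and keep only the uniqueness-of-ruling argument in detail.
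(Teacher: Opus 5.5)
Your Route 1 is essentially the paper's proof: restrict the isomorphism to $\Sigma\cong S_1$, use that the ruling over an elliptic base is unique to descend to an isomorphism $\Jac(X)\cong\Jac(X')$, and handle the marked point by translation (your appeal to homogeneity does this more cleanly than the paper's remark about $3$-torsion translates). Your explicit proof of the forward direction and your Route 2 are correct additions that the paper does not spell out. Route 2 uses the Mori cone to show the isomorphism preserves or swaps the two fibrations, and rules out the swap because only the fibers of $\Pi$ lie in $\Sigma=\Pi^{-1}(X)$.
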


\begin{proof}
Let ${\bmu}$ and $\bmu'$ be two weight vectors. Then for either of the moduli spaces $\mathrm{Bun}^{{\bmu}}_{\mc{O}_X}(3,X,p)$ (and $\mathrm{Bun}^{{\bmu}}_{\mc{O}_{X'}}(3,X',p')$), there are only two choices $M^{\pm}$. By Theorem \ref{thm:MminequalMplus} there is an isomorphism between the compactifying loci $\Psi\vert_{\Smin}:\Smin\stackrel{\sim}{\longrightarrow}\Splus.$ Furthermore, by Proposition \ref{prop:SplusminisS1}, the compactifying loci $\Sigma^\pm$ are isomorphic to the respective ruled surfaces $S_1$ and $S_1^{\prime}$. Recall the Abel-Jacobi map in this case:

\[ \alpha: S_1 \to \Jac(X) , \ \ p_i \oplus p_j \mapsto \mc{O}(p_i\oplus p_j) \]
Respectively, $\alpha^{\prime}: S_1^{\prime} \to \Jac(X')$. 
Note that $\alpha$ and $\alpha'$ are surjective morphisms with fibers being rational curves. By construction the surfaces $S_1$ and $S_1'$ have unique rulings and since the map $\Phi$ is an isomorphism, it must map the fibers of $\alpha$ to $\alpha'$, inducing a birational map $\phi$ between $\Jac(X)$ and $\Jac(X')$. This gives us the following commutative diagram:

\begin{diagram}
     S_1 &\rTo^{\Phi}&S_1^{\prime}\\
     \dTo^{\alpha}&\circlearrowleft&\dTo^{\alpha'}\\
      \Jac(X) &\rTo^{\phi}&\Jac(X')  
         \end{diagram}

Furthermore since $\Jac(X)$ and $\Jac(X')$ are smooth, projective curves, the map $\phi$ is indeed an isomorphism. Note that since $\Jac(X) \simeq X$ and $\Jac(X')\simeq X'$, $\phi$ is an isomorphism of the curves $X,X'$. 
 Moreover, since $X, X'$ are abelian varieties of dimension $1$, $\phi$ maps the point $p$ to $p'$ up to composing with a translation map. In fact, by Theorem \ref{thm:ModularAutomorphism}, we know that the automorphism group is of finite order $9$ and is generated by the $3$-torsion points in $\Jac(X)$. Thus, under the isomorphism $\Phi$, the point $p$ in $X$ is mapped to one of the $9$ points $p' \otimes \mc{O}(t)$, for $t$ a $3$-torsion point.
\end{proof}

 \bibliographystyle{amsalpha}
\bibliography{bibliography}{}

@article{AG,
 author = {Alfaya, David and G{\'o}mez, Tom{\'a}s L.},
 title = {Automorphism group of the moduli space of parabolic bundles over a curve},
 fjournal = {Advances in Mathematics},
 journal = {Adv. Math.},
 issn = {0001-8708},
 volume = {393},
 pages = {127},
 note = {Id/No 108070},
 year = {2021},
 language = {English},
 doi = {10.1016/j.aim.2021.108070},
 zbMATH = {7436487},
 Zbl = {1480.14008}
}

@article{Atiyah55,
 author = {Atiyah, Michael F.},
 title = {Complex fibre bundles and ruled surfaces},
 fjournal = {Proceedings of the London Mathematical Society. Third Series},
 journal = {Proc. Lond. Math. Soc. (3)},
 issn = {0024-6115},
 volume = {5},
 pages = {407--434},
 year = {1955},
 language = {English},
 doi = {10.1112/plms/s3-5.4.407},
 zbMATH = {3279017},
 Zbl = {0174.52804}
}

@article {atiyah57,
    AUTHOR = {Atiyah, M. F.},
     TITLE = {Vector bundles over an elliptic curve},
   JOURNAL = {Proc. London Math. Soc. (3)},
  FJOURNAL = {Proceedings of the London Mathematical Society. Third Series},
    VOLUME = {7},
      YEAR = {1957},
     PAGES = {414--452},
      ISSN = {0024-6115},
   MRCLASS = {14.55 (14.20)},
  MRNUMBER = {0131423},
MRREVIEWER = {F. Hirzebruch},
       DOI = {10.1112/plms/s3-7.1.414},
       URL = {http://dx.doi.org/10.1112/plms/s3-7.1.414},
}

@article{atiyah_2,
 author = {Atiyah, Michael F.},
 title = {Complex analytic connections in fibre bundles},
 fjournal = {Transactions of the American Mathematical Society},
 journal = {Trans. Am. Math. Soc.},
 issn = {0002-9947},
 volume = {85},
 pages = {181--207},
 year = {1957},
 doi = {10.2307/1992969},
 zbMATH = {3128044},
 Zbl = {0078.16002}
}

@article{BBR,
 author = {Balaji, V. and Biswas, Indranil and Del Ba{\~n}o Rollin, Sebastian},
 title = {A {Torelli} type theorem for the moduli space of parabolic vector bundles over curves.},
 fjournal = {Mathematical Proceedings of the Cambridge Philosophical Society},
 journal = {Math. Proc. Camb. Philos. Soc.},
 issn = {0305-0041},
 volume = {130},
 number = {2},
 pages = {269--280},
 year = {2001},
 language = {English},
 doi = {10.1017/S0305004100004916},
 zbMATH = {1590546},
 Zbl = {1063.14042}
}

@article{BGHL,
 author = {Biswas, Indranil and G{\'o}mez, Tom{\'a}s L. and Hoffmann, Norbert and Logares, Marina},
 title = {Torelli theorem for the {Deligne}-{Hitchin} moduli space},
 fjournal = {Communications in Mathematical Physics},
 journal = {Commun. Math. Phys.},
 issn = {0010-3616},
 volume = {290},
 number = {1},
 pages = {357--369},
 year = {2009},
 language = {English},
 doi = {10.1007/s00220-009-0831-3},
 zbMATH = {5655475},
 Zbl = {1184.32003}
}

@article{Biswas-Hoffmann,
 author = {Biswas, Indranil and Hoffmann, Norbert},
 title = {A {Torelli} theorem for moduli spaces of principal bundles over a curve},
 fjournal = {Annales de l'Institut Fourier},
 journal = {Ann. Inst. Fourier},
 issn = {0373-0956},
 volume = {62},
 number = {1},
 pages = {87--106},
 year = {2012},
 language = {English},
 doi = {10.5802/aif.2700},
 zbMATH = {6064513},
 Zbl = {1268.14010}
}

@article{BGM,
 author = {Biswas, Indranil and G{\'o}mez, Tomas L. and Mu{\~n}oz, Vicente},
 title = {Automorphisms of moduli spaces of symplectic bundles},
 fjournal = {International Journal of Mathematics},
 journal = {Int. J. Math.},
 issn = {0129-167X},
 volume = {23},
 number = {5},
 pages = {1250052, 27},
 year = {2012},
 language = {English},
 doi = {10.1142/S0129167X12500528},
 zbMATH = {6037659},
 Zbl = {1246.14048}
}

@article{Boden-Yokogawa-rat,
 author = {Boden, Hans U. and Yokogawa, K{\^o}ji},
 title = {Rationality of moduli spaces of parabolic bundles},
 fjournal = {Journal of the London Mathematical Society. Second Series},
 journal = {J. Lond. Math. Soc., II. Ser.},
 issn = {0024-6107},
 volume = {59},
 number = {2},
 pages = {461--478},
 year = {1999},
 language = {English},
 doi = {10.1112/S0024610799007061},
 zbMATH = {1350191},
 Zbl = {1023.14025}
}

@article{Boden-Yokogawa,
 author = {Boden, Hans U. and Yokogawa, K{\^o}ji},
 title = {Moduli spaces of parabolic {Higgs} bundles and parabolic {{\(K(D)\)}} pairs over smooth curves. {I}},
 fjournal = {International Journal of Mathematics},
 journal = {Int. J. Math.},
 issn = {0129-167X},
 volume = {7},
 number = {5},
 pages = {573--598},
 year = {1996},
 language = {English},
 doi = {10.1142/S0129167X96000311},
 zbMATH = {958513},
 Zbl = {0883.14012}
}

@article{Caporaso-Viviani,
 author = {Caporaso, Lucia and Viviani, Filippo},
 title = {Torelli theorem for stable curves},
 fjournal = {Journal of the European Mathematical Society (JEMS)},
 journal = {J. Eur. Math. Soc. (JEMS)},
 issn = {1435-9855},
 volume = {13},
 number = {5},
 pages = {1289--1329},
 year = {2011},
 language = {English},
 doi = {10.4171/JEMS/281},
 zbMATH = {5947475},
 Zbl = {1230.14037}
}

@article{Diaw,
 author = {Diaw, Arame},
 title = {Geometry of the stable ruled surface over an elliptic curve},
 fjournal = {Bulletin of the Brazilian Mathematical Society. New Series},
 journal = {Bull. Braz. Math. Soc. (N.S.)},
 issn = {1678-7544},
 volume = {52},
 number = {3},
 pages = {645--662},
 year = {2021},
 language = {English},
 doi = {10.1007/s00574-020-00224-7},
 zbMATH = {7375733},
 Zbl = {1467.14093}
}

@article{Maycol,
 author = {Luza, M. Falla},
 title = {On the density of second order differential equations without algebraic solutions on {{\(\mathbb{P}^2\)}}},
 fjournal = {Publicacions Matem{\`a}tiques},
 journal = {Publ. Mat., Barc.},
 issn = {0214-1493},
 volume = {55},
 number = {1},
 pages = {163--183},
 year = {2011},
 language = {English},
 doi = {10.5565/PUBLMAT_55111_08},
 url = {ddd.uab.cat/record/65203},
 zbMATH = {5849332},
 Zbl = {1209.37056}
}

@article{FJ,
 author = {Fassarella, Thiago and Justo, Luana},
 title = {A {Torelli} theorem for moduli spaces of parabolic vector bundles over an elliptic curve},
 fjournal = {Proceedings of the American Mathematical Society},
 journal = {Proc. Am. Math. Soc.},
 issn = {0002-9939},
 volume = {150},
 number = {5},
 pages = {1849--1863},
 year = {2022},
 language = {English},
 doi = {10.1090/proc/15937},
 zbMATH = {7487879},
 Zbl = {1490.14025}
}

@article{Fischer-Grauert-65,
 author = {Fischer, W. and Grauert, Hans},
 title = {Lokal-triviale {Familien} kompakter komplexer {Mannigfaltigkeiten}},
 fjournal = {Nachrichten der Akademie der Wissenschaften in G{\"o}ttingen. II. Mathematisch-Physikalische Klasse},
 journal = {Nachr. Akad. Wiss. G{\"o}tt., II. Math.-Phys. Kl.},
 issn = {0065-5295},
 volume = {1965},
 pages = {89--94},
 year = {1965},
 language = {German},
 zbMATH = {3219133},
 Zbl = {0135.12601}
}

@article{ Basu-Dan-Kaur,
 author = {Basu, Suratno and Dan, Ananyo and Kaur, Inder},
 title = {Degeneration of intermediate {Jacobians} and the {Torelli} theorem},
 fjournal = {Documenta Mathematica},
 journal = {Doc. Math.},
 issn = {1431-0635},
 volume = {24},
 pages = {1739--1767},
 year = {2019},
 language = {English},
 doi = {10.25537/dm.2019v24.1739-1767},
 zbMATH = {7117265},
 Zbl = {1442.14036}
}

@book{LePotier97,
 author = {Le Potier, Joseph},
 title = {Lectures on vector bundles},
 fseries = {Cambridge Studies in Advanced Mathematics},
 series = {Camb. Stud. Adv. Math.},
 volume = {54},
 isbn = {0-521-48182-1},
 year = {1997},
 publisher = {Cambridge: Cambridge University Press},
 language = {English},
 zbMATH = {996200},
 Zbl = {0872.14003}
}

@article{Maruyama,
 author = {Maruyama, Masaki},
 title = {On automorphism groups of ruled surfaces},
 fjournal = {Journal of Mathematics of Kyoto University},
 journal = {J. Math. Kyoto Univ.},
 issn = {0023-608X},
 volume = {11},
 pages = {89--112},
 year = {1971},
 doi = {10.1215/kjm/1250523688},
 zbMATH = {3339067},
 Zbl = {0213.47803}
}

@article{M-Y,
 author = {Maruyama, M. and Yokogawa, K.},
 title = {Moduli of parabolic stable sheaves},
 fjournal = {Mathematische Annalen},
 journal = {Math. Ann.},
 issn = {0025-5831},
 volume = {293},
 number = {1},
 pages = {77--99},
 year = {1992},
 language = {English},
 doi = {10.1007/BF01444704},
 url = {https://eudml.org/doc/164946},
 zbMATH = {4215906},
 Zbl = {0735.14008}
}

@article{M-S,
 author = {Mehta, V. B. and Seshadri, C. S.},
 title = {Moduli of vector bundles on curves with parabolic structures},
 fjournal = {Mathematische Annalen},
 journal = {Math. Ann.},
 issn = {0025-5831},
 volume = {248},
 pages = {205--239},
 year = {1980},
 language = {English},
 doi = {10.1007/BF01420526},
 url = {https://eudml.org/doc/163379},
 zbMATH = {3710299},
 Zbl = {0454.14006}
}

@article{Mumford-Newstead,
 author = {Mumford, D. and Newstead, P.},
 title = {Periods of a moduli space of bundles on curves},
 fjournal = {American Journal of Mathematics},
 journal = {Am. J. Math.},
 issn = {0002-9327},
 volume = {90},
 pages = {1200--1208},
 year = {1968},
 language = {English},
 doi = {10.2307/2373296},
 url = {nrs.harvard.edu/urn-3:HUL.InstRepos:3597250},
 zbMATH = {3279019},
 Zbl = {0174.52902}
}

@article{N-S,
 author = {Narasimhan, M. S. and Seshadri, C. S.},
 title = {Stable and unitary vector bundles on a compact {Riemann} surface},
 fjournal = {Annals of Mathematics. Second Series},
 journal = {Ann. Math. (2)},
 issn = {0003-486X},
 volume = {82},
 pages = {540--567},
 year = {1965},
 language = {English},
 doi = {10.2307/1970710},
 zbMATH = {3272427},
 Zbl = {0171.04803}
}

@article{NFV,
 author = {Fern{\'a}ndez Vargas, N{\'e}stor},
 title = {Geometry of the moduli of parabolic bundles on elliptic curves},
 fjournal = {Transactions of the American Mathematical Society},
 journal = {Trans. Am. Math. Soc.},
 issn = {0002-9947},
 volume = {374},
 number = {5},
 pages = {3025--3052},
 year = {2021},
 language = {English},
 doi = {10.1090/tran/7330},
 zbMATH = {7331115},
 Zbl = {1467.14084}
}

@article{S-T,
 author = {Szenes, Andr{\'a}s and Trapeznikova, Olga},
 title = {The parabolic {Verlinde} formula: iterated residues and wall-crossings},
 fjournal = {Geometry \& Topology},
 journal = {Geom. Topol.},
 issn = {1465-3060},
 volume = {28},
 number = {5},
 pages = {2259--2311},
 year = {2024},
 language = {English},
 doi = {10.2140/gt.2024.28.2259},
 zbMATH = {7927930},
 Zbl = {1558.14024}
}

@article{Torelli,
 author = {Torelli, R.},
 title = {Sulle serie algebriche semplicemente infinite di gruppi di punti appartenenti a una curva algebrica.},
 fjournal = {Accademia dei Lincei, Rendiconti, V. Serie},
 journal = {Rom. Acc. L. Rend. (5)},
 issn = {0001-4435},
 volume = {22},
 number = {1},
 pages = {772--775},
 year = {1913},
 language = {Italian},
 zbMATH = {2622396},
 JFM = {44.0655.02}
}

@article{Tu,
 author = {Tu, Loring W.},
 title = {Semistable bundles over an elliptic curve},
 fjournal = {Advances in Mathematics},
 journal = {Adv. Math.},
 issn = {0001-8708},
 volume = {98},
 number = {1},
 pages = {1--26},
 year = {1993},
 language = {English},
 doi = {10.1006/aima.1993.1011},
 zbMATH = {221237},
 Zbl = {0786.14021}
}

@article{Weil,
 author = {Weil, A.},
 title = {G{\'e}n{\'e}ralisation des fonctions ab{\'e}liennes.},
 fjournal = {Journal de Math{\'e}matiques Pures et Appliqu{\'e}es. Neuvi{\`e}me S{\'e}rie},
 journal = {J. Math. Pures Appl. (9)},
 issn = {0021-7824},
 volume = {17},
 pages = {47--87},
 year = {1938},
 language = {French},
 zbMATH = {2515671},
 JFM = {64.0361.02}
}

\end{document}